\newcommand{\field}[1]{\mathbb{#1}}
\newcommand{\inpr}[2]{\langle #1,#2 \rangle}
\newcommand{\N}{\field{N}}
\newcommand{\R}{\field{R}}
\newcommand{\C}{\field{C}}
\renewcommand{\H}{\field{H}}
\newcommand{\D}{\field{D}}
\renewcommand{\Re}{\mathrm{Re}}
\newcommand{\lhol}{\mathcal{SH}_L}
\newcommand{\rhol}{\mathcal{SH}_R}
\newcommand{\intrin}{\mathcal{N}}
\newcommand{\sderiv}[1][]{\partial_{S#1}}
\newcommand{\rsderiv}[1][]{\partial_{\overset{\leftarrow}{S#1}}}
\newcommand{\bpartial}{\overline{\partial}}
\newcommand{\rpartial}[1]{\partial_{\overset{\leftarrow}{#1}}}
\newcommand{\rbpartial}[1]{\bpartial_{\overset{\leftarrow}{#1}}}
\newcommand{\berg}{\mathcal{A}}
\newcommand{\holder}{\frac{1}{p}+\frac{1}{q} = 1}
\newcommand{\res}[1]{\operatorname{Res}_{#1}}
\newcommand{\Tr}[1]{\operatorname{Tr}_{#1}}
\newcommand{\tr}{\operatorname{tr}}
\newcommand{\lift}[1]{\operatorname{Lift}_{#1}}
\newcommand{\B}{\mathcal{B}}
\newcommand{\boundOP}{\B}
\newcommand{\hil}{\mathcal{H}}
\newcommand{\ran}{\mathrm{ran}}
\newcommand{\linspan}{\operatorname{span}}
\newcommand{\bi}{\boldsymbol{i}}
\renewcommand{\S}{\mathbb{S}}
\newcommand{\ext}{\operatorname{ext}}
\newcommand{\id}{\mathcal{I}}
\theoremstyle{plain}
\newtheorem{theorem}{Theorem}[section]
\newtheorem{lemma}[theorem]{Lemma}
\newtheorem{proposition}[theorem]{Proposition}
\newtheorem{corollary}[theorem]{Corollary}
\theoremstyle{definition}
\newtheorem{definition}[theorem]{Definition}
\newtheorem{remark}[theorem]{Remark}
\date{}
\title{\bf Schatten class and Berezin transform of quaternionic linear operators}
\author{
Fabrizio Colombo\\
Politecnico di Milano\\
Dipartimento di Matematica\\
Via E. Bonardi, 9\\
20133 Milano, Italy\\
fabrizio.colombo@polimi.it\\
\and
Jonathan Gantner \\
Politecnico di Milano\\
Dipartimento di Matematica\\
Via E. Bonardi, 9\\
20133 Milano, Italy\\
jonathan.gantner@polimi.it\\
\and
Tim Janssens\\
University of Antwerp\\
Department of Mathematics\\
Middelheimlaan, 1\\
2020 Antwerp, Belgium\\
tim.janssens4@uantwerpen.be
}
\begin{document}

\maketitle

\begin{abstract}
In this paper we introduce the Schatten class of operators
 and the Berezin transform of operators in the quaternionic setting. The first topic is of great importance in operator theory but  it is also
  necessary to study the second
 one because we need the notion of trace class operators, which is a particular case of the Schatten class.
 Regarding the Berezin transform, we give  the general definition and properties. Then we concentrate
 on the  setting of weighted Bergman spaces of slice hyperholomorphic functions.
 Our results are based on the $S$-spectrum for quaternionic operators, which is the notion of spectrum that appears
 in the quaternionic version of the spectral theorem and in the quaternionic $S$-functional calculus.
\end{abstract}
\vskip 1cm
\par\noindent
 AMS Classification: 47A10, 47A60, 30G35.
\vskip 0.1cm
\par\noindent
\noindent
{\em Key words}: Schatten class of operators, Spectral theorem and the S-spectrum, Berezin transform of quaternionic operators,
Weighted Bergman spaces of slice hyperholomorphic functions.
\vskip 1cm
\noindent
\section{Introduction}

The study of quaternionic linear operators was stimulated by  the celebrated paper  of
 G. Birkhoff and  J. von Neumann \cite{BvN} on the logic of quantum mechanics,  where they proved that there are essentially
 two possible ways to formulate quantum mechanics: the well known one using complex numbers and also one using quaternions.
 The development of quaternionic quantum mechanics
 has been done by several authors in the past. Without claiming completeness, we mention: S. Adler, see \cite{adler},
  L. P. Horwitz and L. C. Biedenharn, see \cite{21},
D. Finkelstein, J. M. Jauch, S. Schiminovich and  D. Speiser, see \cite{14}, and  G. Emch see \cite{12}.
The notion of spectrum of a quaternionic linear operator used in the past was the right spectrum. Even though this notion
of spectrum gives just a partial description of the spectral analysis of a quaternionic operator, basically
 it contains just the eigenvalues,
several results in quaternionic quantum mechanics have been obtained.

 Only in 2007, one of the authors introduced with some collaborators the notion of $S$-spectrum of a quaternionic operator, see the book \cite{SCalcBook}.
 This spectrum was suggested by the quaternionic version of the Riesz-Dunford functional calculus
 (see \cite{ds,rudin} for the classical results), which is called $S$-functional calculus or quaternionic functional calculus and whose full development was done in  \cite{acgs,JGA,CLOSED}.
 Using the notion of right spectrum of a quaternionic matrix, the spectral theorem was proved in  \cite{fp}
 (see \cite{ds2} for the classical result).
 The spectral theorem for quaternionic bounded or unbounded normal operators based on the $S$-spectrum
  has been proved recently in the papers \cite{ack,acks2}. The case of compact operators is in \cite{spectcomp}.

  The  literature contains several attempts to prove the quaternionic version of the spectral theorem, but the notion of spectrum in the general case was not made clear, see \cite{sc,Viswanath}.
  The fact that the $S$-eigenvalues and the right eigenvalues are equal explains why some results in quaternionic quantum mechanic
  were possible to be obtained without the notion of $S$-spectrum.
  More recently, the use of the $S$-spectrum can be found in \cite{Thirulogasanthar}.
  \\

 The aim of this paper is to introduce the Schatten class of quaternionic operators and the Berezin transform.
 In the following, a complex Hilbert space will be denoted by $\hil_\C$ while we keep the symbol $\hil$ for a quaternionic Hilbert space.
We recall some facts of the classical theory in order to better explain the quaternionic setting.
Suppose that $B$ is a compact self-adjoint operator on a complex separable Hilbert space $\mathcal{H}_{\mathbb{C}}$. Then it is well known that
there exists a sequence of real numbers $\{\lambda_n\}$ tending to zero and there exists an orthonormal set
$\{u_n\}$ in $\hil_\C$ such that
$$
Bx=\sum_{n=1}^\infty \lambda_n\langle x,u_n\rangle u_n,\ \ \ \ {\rm for\ all}\ \ x\in \hil_\C.
$$
When $A$ is a compact operator, but not necessarily self-adjoint, we consider the polar decomposition
$A=U|A|$, where $|A|=\sqrt{A^*A}$ is a positive operator and $U$ is a partial isometry.
From the spectral representation of compact self-adjoint operators and the polar decomposition of bounded linear operators, we
get the representation of the compact operator $A$ as
$$
Ax=\sum_{n=1}^\infty \lambda_n\langle x,u_n\rangle v_n,\ \ \ \ {\rm for\ all}\ \ x\in \hil_\C,
$$
where $v_n:=Uu_n$ is an orthonormal set.
The positive real numbers $\{\lambda_n\}_{n\in \mathbb{N}}$ are called singular values of $A$.

Associated with the singular values of $A$, we have the Schatten class  operators $S_p$ that consists of those
operators for which the sequence $\{\lambda_n\}_{n\in \mathbb{N}}$ belongs to $\ell^p$ for $p\in (0,\infty)$.
In the case $p\in [1,\infty)$ the sets $S_p$ are Banach spaces.
\\
Two important spaces are $p=1$, the so called trace class, and $p=2$, the Hilbert-Smith operators.
\\
The Schatten class  operators were apparently introduced to study integral equations with non
symmetric kernels as pointed out in the book of I. C. Gohberg and M. G. Krein \cite{GK}, later on it was discovered
that the properties of the singular values play an important role in the construction of the general
theory of symmetrically normed ideals of compact operators.
\\
\\
Let us point out that in the quaternionic setting, it has been understood just recently
what the correct notion of spectrum is that replaces the classical
notion of spectrum of operators on a complex Banach space. This spectrum is called $S$-spectrum and
its is also defined for $n$-tuples of noncommuting operators, see \cite{SCalcBook}.

Precisely,  let $V$ be a bilateral quaternionic
Banach space and denote by $\boundOP(V)$ the Banach space of all quaternionic bounded linear operators on $V$
endowed with the natural norm.
Let $T: V\to V$ be a quaternionic bounded linear operator (left or right linear). We define the $S$-spectrum of $T$ as
$$
\sigma_S(T)=\{s\in \mathbb{H}\ :\ T^2-2\Re(s)T +|s|^2\mathcal{I}\ \ {\rm is\ not\ invertible\ in }\  \ \mathcal{B}(V)\}
$$
where $s=s_0+s_1e_1+s_2e_2+s_2e_3$ is a quaternion and $e_1$, $e_2$ and $e_3$ is the standard basis of the
 quaternions $\mathbb{H}$,
$\Re(s)=s_0$ is the real part  and $|s|^2$ is the squared Euclidean norm.
The $S$-resolvent set is defined as
$$
\rho_S(T)=\mathbb{H}\setminus \sigma_S(T).
$$

The notion of $S$-spectrum for quaternionic operators arises naturally in the slice
hyperholomorphic functional calculus, called $S$-functional calculus or quaternionic functional calculus,
which is the quaternionic analogue
of the Riesz-Dunford functional calculus for complex operators on a complex Banach space.
Recently, it turned out that also the spectral theorem for quaternionic operators
(bounded or unbounded) is based on the $S$-spectrum.

This fact restores for the quaternionic setting the well known fact
that  both the Riesz-Dunford functional calculus
and the spectral theorem are based on the same notion of spectrum.

This  was not clear for a long time because the literature related to quaternionic linear operators used the notion of right spectrum of a quaternionic operator, which,
for a right linear operator $T$, is defined as
$$
\sigma_R(T)=\{s\in \mathbb{H}\ \ such\ \ that  \ \exists  \ \  0\not=v\in V\ : \ Tv=vs     \}.
$$
This definition gives only rise to the eigenvalues of $T$.
The other possibility is to define the left spectrum $\sigma_L(T)$, where we replace $Tv=vs$ by $Tv=sv$.
In the case of the right eigenvalues we have a nonlinear operator associated with the spectral problem. In  the case of the left spectrum, we have a linear equation,
 but in both cases there does not exist a suitable notion of
resolvent operator which preserves some sort of hyperholomorphicity.
\\
Only in the case of the $S$-spectrum, we can associate to it the
 two $S$-resolvent operators. For $s\in \rho_S(T)$, the left $S$-resolvent operator
$$
S_L^{-1}(s,T):=-(T^2-2\Re(s) T+|s|^2\mathcal{I})^{-1}(T-\overline{s}\mathcal{I}),
$$
and the right $S$-resolvent operator
$$
S_R^{-1}(s,T):=-(T-\overline{s}\mathcal{I})(T^2-2\Re(s) T+|s|^2\mathcal{I})^{-1}.
$$
are  operator-valued slice hyperholomorphic functions.
Moreover, with the notion of $S$-spectrum, the usual decompositions of the spectrum turn out to be natural, for example
in point spectrum, continuous spectrum and residual spectrum.
\\
So one of the main aims of this paper is to define the Schatten classes of
quaternionic operators using the notion of $S$-spectrum and to study their main properties.
\\
Here we point out the following fact that shows one of the main differences with respect to the complex case.
The trace of a compact complex linear operator is defined as
\begin{equation}
\tr A = \sum_{n\in\N} \langle e_n,A e_n\rangle ,
\end{equation}
where $(e_n)_{n\in\N}$ is an orthonormal basis of the Hilbert space $\hil_\C$.
It can be shown that the trace is independent of the choice of the orthonormal basis and therefore well-defined.
If we try to generalize the above definition to the quaternionic
setting, we face the problem that this invariance with respect to the choice of the orthonormal basis does not persist.
We have to define the trace in a different way.
In fact we have to restrict to operators that satisfy an additional condition:
{\em
Let $J \in \mathcal{B}(\hil)$ be an anti-selfadjoint and unitary operator.
For $p\in(0,+\infty]$, we define the $(J,p)$-Schatten class of operators $S_p(J)$ as
$$
S_p(J) := \{ T \in \B_0(\hil) :  [T,J]=0 \ \text{and}\ (\lambda_n(T))_{n\in\N}\in\ell^p\},
$$
where $(\lambda_n(T))_{n\in\N}$ denotes the sequence of singular values of $T$.
}
\\
\\
The Berezin transform is a useful tool to study operators acting on spaces of holomorphic functions
such as Bergman spaces.
In this paper we treat the case of quaternionic operators acting on  spaces of slice
hyperholomorphic functions.
\\
\\
We recall some facts of the classical theory, see \cite{Stroethoff} for more details.
To state the definition of the Berezin transform of a bounded operator we recall that:
\\
{\em
A reproducing  kernel Hilbert space on an open set $\Omega$ of  $\mathbb{C}$, is a Hilbert space
 $\mathcal{H}_\C$ of functions on $\Omega$ such that for every $w\in \Omega$ the linear functional
 $f\to f(w)$ is bounded on $\mathcal{H}_\C$.}
 \\
 When $\mathcal{H}_\C$ is a reproducing  kernel Hilbert space,
 by the Riesz representation theorem, there exists a unique $K_w\in \mathcal{H}_\C$ for every $w\in \Omega$
  such that $f(w)=\langle f,K_w\rangle$ for all $f\in \mathcal{H}_\C$. The function $K_w$ is called a
  reproducing kernel.
It is a well known fact that $K_w(z)=\sum_{\ell\in L}\overline{e_\ell}(w)e_\ell(z)$ where $\{e_\ell\ :\ \ell\in L\}$
is an orthonormal basis for $\mathcal{H}_\C$. Since $K_w(z)=\overline{K_z(w)}$, we write $
K(z,w):=K_w(z)$. The norm of $K_w$ is given by
$$
\|K_w\|^2=\langle K_w,K_w\rangle=K_w(w)=K(w,w)
$$
and when this norm is different from zero the function
$$
k_w=\frac{K_w}{\|K_w\|}
$$
is called normalized reproducing kernel at $w$. Examples of reproducing kernel Hilbert spaces
are Hardy spaces, Bergman spaces, Fock spaces and Dirichlet spaces to name a few.
\\
Keeping in mind the above definition, we define the Berezin transform of an operator as follows:
{\em Let $\mathcal{H}_\C$ be a reproducing  kernel Hilbert space of analytic functions on an open set
$\Omega$ of $\mathbb{C}$ and let $A$ be a bounded operator on $\mathcal{H}_\C$. The Berezin transform of $A$ is defined as
$$
\widetilde{A}(w) :=\langle K_w, AK_w\rangle,\ \ \ {\rm for \ all}\  w\in \Omega.
$$}
The importance of the Berezin transform is due to the possibility to deduce properties of the bounded operator $A$ from the properties of the analytic function
$\widetilde{A}(w)$. For example,
an important consequence  is that
$$
A=0\ \ {\rm if\ and \ only \ if}\ \widetilde{A}(w)=0\ \ {\rm for \ all}\  w\in \Omega.
$$
When we consider quaternionic operators, we have to replace the notion of holomorphic
 function with the notion of slice hyperholomorphic function.
Using the theory of function spaces of slice hyperholomorphic functions,
it is possible to extend classical results
to the quaternionic setting.
\\
We treat the case of weighted Bergman spaces on the unit ball $\D$ in the quaternions, whose reproducing kernel is defined as follows:
consider  $\alpha > -1$ and $q$, $w\in \D$.  For $q = q_0 + i_q q_1$ set $q_{i_w} = q_0 + i_wq_1$, where $w = w_0+ i_w w_1$, and define the slice hyperholomorphic $\alpha$-Bergman kernel as
\[
K_{\alpha}(q,w):=\frac12(1-i_qi)\frac{1}{(1-q_{i_w}\overline{w})^{2+\alpha}} + \frac12(1+i_qi)\frac{1}{(1-\overline{q_{i_w}}\,\overline{w})^{2+\alpha}}.
\]
This kernel is left slice hyperholomorphic in $q$ and anti right slice hyperholomorphic in $w$. Moreover, whenever $q$ and $w$ belong to the same complex plane, it reduces to the complex Bergman kernel.
\\
The plan of the paper is as follows:.
\\
Section 1 contains the introduction.
\\
Section 2 contains a subsection in which  we discussion two of the possible definitions
of slice hyperholomorphic functions and their properties, and a subsection that contains the main results on quaternionic operator theory.
\\
Section 3 contains the Schatten class of quaternionic operators. Precisely, we consider the singular values of compact operators, the Schatten class, the dual of the Schatten class and different characterizations of Schatten class operators.
 \\
 Section 4 contains the general theory of the Berezin transform and the case of weighted Bergman spaces with several properties.

\section{Preliminary results}

Recent works on Schur analysis in the slice hyperholomorphic setting
introduced and studied the quaternionic Hardy spaces $H_2(\Omega)$, where $\Omega$ is the quaternionic
unit ball $\mathbb D$ or the half space
$\mathbb H^+$ of quaternions with positive real part, \cite{acs1, acs2, milano}. The Hardy spaces $H^p(\mathbb B)$ for $p>2$  are considered in \cite{sarfatti}.
 The Bergman spaces are treated in \cite{cglss, CGS, CGS3} and  for the Fock spaces see \cite{Fock}.
Weighted Bergman spaces, Bloch, Besov and Dirichlet spaces on the unit ball $\mathbb B$ are studied in
\cite{BBD}.
In the following we want  discuss two possible definitions of slice hyperholomorphic functions
since in both cases the above mentioned function spaces can be defined with minor changes in the proofs.
The discussion in the next subsection will clarify the variations of slice hyperholomorphicity.

\subsection{Slice hyperholomorphic functions}
The skew-field of quaternions consists of the real vector space
$$
\H:=\left\{\xi_0 + \sum_{\ell=1}^3\xi_\ell e_\ell: \xi_\ell\in\R\right\},
$$
 where the units $e_1$, $e_2$ and $e_3$ satisfy
$e_i^2 = -1$ and $ e_\ell e_\kappa  =  - e_\ell e_\kappa$ for $\ell,\kappa \in\{1,2,3\}$ with $\ell\neq \kappa$.
The real part of a quaternion $x = \xi_0 + \sum_{\ell=1}^3\xi_\ell e_\ell$ is defined as $\Re(x) := \xi_0$,
its imaginary part as
 $\underline{x} := \sum_{\ell=1}^3\xi_\ell e_\ell$ and its conjugate as $\overline{x} := \Re(x) - \underline{x}$.
Each element of the set
\[\S := \{ x\in\H: \Re(x) = 0, |x| = 1 \}\]
 is a square-root of $-1$ and is therefore called an imaginary unit. For any $i\in\S$, the subspace
 $\C_i := \{x_0 + i x_1: x_1,x_2\in\R\}$ is isomorphic to the field of complex numbers. For $i,j\in\S$ with $i\perp j$, set $k=ij = -ji$. Then $1$, $i$, $j$ and $k$ form an
 orthonormal basis of $\H$ as a real vector space and $1$ and $j$ form an orthonormal basis of $\H$ as a left or right vector space over the complex plane $\C_i$, that is
 \[ \H = \C_i + \C_i j\quad\text{and}\quad \H = \C_i + j\C_i.\]
  Any quaternion $x$ belongs to such a complex plane: if we set
 \[i_x := \begin{cases}\underline{x}/|\underline{x}|,& \text{if  }\underline{x} \neq 0 \\ \text{any }i\in\S, \quad&\text{if }\underline{x}  = 0,\end{cases}\]
 then $x = x_0 + i_x x_1$ with $x_0 =\Re(x)$ and $x_1 = |\underline{x}|$. The set
 \[
 [x] := \{x_0 + ix_1: i\in\S\},
 \]
is a 2-sphere, that reduces to a single point if $x$ is real. We recall
 a well known result that we will need in the sequel.
\begin{lemma}\label{Turn}
Let $x,y\in\H$. Then $y\in[x]$ if and only if there exists $q\in\H$ with $|q| = 1$ such that $y = q^{-1}xq$.
\end{lemma}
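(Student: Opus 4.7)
The plan is to prove the two directions separately. The forward direction amounts to checking two conservation laws, while the backward direction reduces to constructing a unit quaternion that rotates one imaginary unit to another on the sphere $\S$.

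For the forward direction, suppose $y = q^{-1} x q$ with $|q| = 1$. I would first verify $\Re(y) = \Re(x)$ and $|y|= |x|$. The modulus is immediate since $|q^{-1} x q| = |q|^{-1} |x| |q| = |x|$. For the real part, write $x = x_0 + \underline{x}$; since $x_0$ is central, $q^{-1} x q = x_0 + q^{-1}\underline{x} q$, and the identity $\overline{q^{-1}\underline{x} q} = \overline{q}\,\overline{\underline{x}}\,\overline{q^{-1}} = -q^{-1}\underline{x} q$ (using $|q|=1$, hence $\overline{q^{-1}} = q$) shows that $q^{-1}\underline{x} q$ is purely imaginary. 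Consequently, writing $y = y_0 + i_y y_1$, we have $y_0 = x_0$ and $y_1 = |\underline{y}| = |\underline{x}| = x_1$, so $y = x_0 + i_y x_1 \in [x]$.

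For the backward direction, let $y \in [x]$. If $x_1 = 0$ then $y = x$ and we take $q = 1$. Otherwise $y = x_0 + i_y x_1$ with $i_y \in \S$, and it suffices to find a unit quaternion $q$ with $q^{-1} i_x q = i_y$, since then $q^{-1} x q = x_0 + (q^{-1} i_x q)\, x_1 = y$. The equation $q^{-1} i_x q = i_y$ rewrites as $i_x q = q i_y$. I would handle this in two cases. If $i_x \neq -i_y$, set $q := (i_x + i_y)/|i_x+i_y|$, which is a unit quaternion, and check directly that
\[
i_x(i_x + i_y) = -1 + i_x i_y = i_x i_y - 1 = (i_x + i_y) i_y.
\]
If instead $i_x = -i_y$, choose any $q \in \S$ orthogonal (in imaginary $3$-space) to $i_x$; then $q i_x = -i_x q$, and using $q^{-1} = -q$ one verifies $q^{-1} i_x q = (-q) i_x q = i_x q^2 = -i_x = i_y$.

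The main content of the argument is the explicit construction of $q$ in the backward direction; the forward direction is a routine computation. The only subtle point is the degenerate case $i_x = -i_y$, where the naive bisector formula $q = (i_x + i_y)/|i_x + i_y|$ breaks down and must be replaced by any unit imaginary quaternion orthogonal to $i_x$, whose existence follows from $\dim_\R\S = 2$.
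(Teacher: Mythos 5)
Your proof is correct and complete. Note that the paper gives no proof of this lemma at all --- it is stated as a recalled, well-known fact --- so there is nothing to compare your argument against; what you have written is a self-contained elementary verification. Both directions are sound: the forward direction correctly combines multiplicativity of the quaternionic norm with the identity $\overline{q^{-1}\underline{x}q}=-q^{-1}\underline{x}q$ (valid since $\overline{q^{-1}}=q$ when $|q|=1$) to show that conjugation preserves the real part and the modulus of the imaginary part; the backward direction correctly reduces the problem to solving $i_xq=qi_y$, and the bisector $q=(i_x+i_y)/|i_x+i_y|$ together with the separate treatment of the antipodal case $i_y=-i_x$ (via an anticommuting orthogonal imaginary unit) covers all possibilities, including the degenerate real case $x_1=0$. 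One phrasing quibble: the existence of a unit imaginary quaternion orthogonal to $i_x$ is really a consequence of the orthogonal complement of $i_x$ inside the three-dimensional space of purely imaginary quaternions being two-dimensional (hence meeting $\S$ in a circle), which is presumably what you mean by $\dim_\R\S=2$; this is a matter of wording, not a gap. For comparison, the standard slick argument invokes the fact that $q\mapsto(p\mapsto q^{-1}pq)$ maps the unit quaternions onto the rotation group of the imaginary $3$-space, which acts transitively on $\S$; your explicit construction avoids that machinery at essentially no extra cost.
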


The notion of slice hyperholomorphicity is the generalization of holomorphicity
to quaternion-valued functions that underlies the theory of quaternionic linear operators.
We recall the main results on slice hyperholomorphic functions.
The proofs of the results stated in this subsection can be found in the book \cite{SCalcBook}.

\begin{definition}
A set $U\subset\H$ is called
\begin{enumerate}[(i)]
\item axially symmetric if $[x]\subset U$ for any $x\in U$ and
\item a slice domain if $U$ is open, $U\cap\R\neq 0$ and $U\cap\C_i$ is a domain for any $i\in\S$.
\end{enumerate}
\end{definition}

\begin{definition}
Let $U\subset\H$ be an axially symmetric open set. A real differentiable function $f: U\to\H$ is called left slice hyperholomorphic if it has the form
\begin{equation}\label{LSHol}
 f(x) = \alpha(x_0,x_1) + i_x\beta(x_0,x_1),\quad \forall x = x_0 + i_x x_1 \in U\end{equation}
such that the functions $\alpha$ and $\beta$, which take values in $\H$, satisfy the compatibility condition
\begin{equation}\label{SymCond}
\begin{split}\alpha(x_0,x_1) &= \alpha(x_0,x_1)\\
 \beta(x_0,x_1) &= - \beta(x_0,-x_1)
 \end{split}
\end{equation}
and the Cauchy-Riemann-system
\begin{equation}\label{CR}\begin{split}
\frac{\partial}{\partial x_0} \alpha(x_0,x_1) &= \frac{\partial}{\partial x_1} \beta(x_0,x_1)\\
\frac{\partial}{\partial x_0}\beta(x_0,x_1) &= -\frac{\partial}{\partial x_1}\alpha(x_0,x_1).
\end{split}\end{equation}
A function $f:U\to\H$ is called right slice hyperholomorphic if it has the form
\begin{equation}\label{RSHol}
 f(x) = \alpha(x_0,x_1) + \beta(x_0,x_1)i_x,\quad \forall x = x_0 + i_x x_1 \in U,
 \end{equation}
such that the functions $\alpha$ and $\beta$ satisfy \eqref{SymCond} and \eqref{CR}.

The sets of left and right slice hyperholomorphic functions on $U$ are denoted by $\lhol(U)$ and $\rhol(U)$, respectively. Finally, we say that a function $f$ is left or right slice hyperholomorphic on a closed axially symmetric set $K$, if there exists an open axially symmetric set $U$ with $K\subset U$ such that $f\in \lhol(U)$ resp. $\rhol(U)$.
\end{definition}

\begin{corollary}
Let $U\subset\H$ be axially symmetric.
\begin{enumerate}[(i)]
\item If $f,g\in\lhol(U)$ and $a\in\H$, then $fa+g\in\lhol(U)$.
\item If $f,g\in\rhol(U)$ and $a\in\H$, then $af + g\in\rhol(U)$.
\end{enumerate}
\end{corollary}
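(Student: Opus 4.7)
The plan is to verify directly from \eqref{LSHol}--\eqref{CR} (respectively \eqref{RSHol} together with \eqref{SymCond}--\eqref{CR}) that $fa+g$ and $af+g$ each admit a decomposition of the required form, with components obtained as $\H$-linear combinations of the components of $f$ and $g$. The crucial observation is the sidedness of the constant $a$: in the left slice hyperholomorphic decomposition the imaginary unit $i_x$ stands on the left of $\beta$, so only right multiplication by a quaternion can be pulled across it without disturbing the form, whereas in the right slice hyperholomorphic decomposition the situation is reversed.

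For part (i), I would write $f(x)=\alpha_f(x_0,x_1)+i_x\beta_f(x_0,x_1)$ and $g(x)=\alpha_g(x_0,x_1)+i_x\beta_g(x_0,x_1)$ as in \eqref{LSHol}. Then for every $x=x_0+i_xx_1\in U$,
\[
(fa+g)(x) = \bigl(\alpha_f(x_0,x_1)a+\alpha_g(x_0,x_1)\bigr) + i_x\bigl(\beta_f(x_0,x_1)a+\beta_g(x_0,x_1)\bigr),
\]
so I set $\alpha:=\alpha_f a+\alpha_g$ and $\beta:=\beta_f a+\beta_g$. The parity conditions \eqref{SymCond} and the Cauchy--Riemann system \eqref{CR} are linear equations in $(\alpha,\beta)$ that are preserved under addition and under right multiplication by the fixed quaternion $a$, since this operation commutes with the real partial derivatives $\partial/\partial x_0$ and $\partial/\partial x_1$ and with the sign change $x_1\mapsto -x_1$. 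Hence $(\alpha,\beta)$ inherits both properties from $(\alpha_f,\beta_f)$ and $(\alpha_g,\beta_g)$, and the real differentiability of $fa+g$ is immediate.

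Part (ii) is the mirror image: using \eqref{RSHol} one obtains
\[
(af+g)(x) = \bigl(a\alpha_f(x_0,x_1)+\alpha_g(x_0,x_1)\bigr) + \bigl(a\beta_f(x_0,x_1)+\beta_g(x_0,x_1)\bigr)i_x,
\]
and the verification is identical with left multiplication by $a$ in place of right multiplication. There is essentially no technical obstacle; the only thing to be careful about is matching the side of the constant $a$ with the side on which $i_x$ appears in the decomposition, so that one never has to commute noncommuting factors.
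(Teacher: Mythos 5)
Your proof is correct and is exactly the standard direct verification; the paper itself does not prove this corollary but defers to \cite{SCalcBook}, where the argument is the same componentwise check. Your observation about matching the side of the constant $a$ with the side of $i_x$ in \eqref{LSHol} versus \eqref{RSHol} is precisely the point of the statement, and the preservation of \eqref{SymCond} and \eqref{CR} under addition and one-sided multiplication by a constant quaternion is verified correctly.
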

On axially symmetric slice domains, slice hyperholomorphic functions can be characterized as
those functions that lie in the kernel of a slicewise Cauchy-Riemann-operator.
As a consequence, the restriction of a slice hyperholomorphic function to a complex plane can be split into two holomorphic components.
\begin{definition}
Let $f$ be a function defined on a set $U\subset\H$  and let $i\in\S$. We denote the restriction of $f$ to the complex plane $\C_i$ by $f_i$, i.e. $f_i := f|_{U\cap \C_i}$.
\end{definition}
In order to avoid confusion we point out that, in this paper, indices $i$ and $j$ always refer to restrictions of a function to the respective complex planes.
\begin{definition}
Let $U\subset\H$ be open. We define the following differential operators: for any real differentiable function $f:U\to\H$ and any ${x = x_0 + i_x x_1\in U}$, we set
\begin{align*}
\partial_i f(x) &= \frac12\left(\frac{\partial}{\partial x_0} f_{i_x}(x) - i_x \frac{\partial}{\partial x_1} f_{i_x}(x) \right)\\
\bpartial_i f(x) &= \frac12\left(\frac{\partial}{\partial x_0} f_{i_x}(x) + i_x \frac{\partial}{\partial x_1} f_{i_x}(x)\right)
\end{align*}
and
\begin{align*}
f(x)\rpartial{i} &= \frac12\left(\frac{\partial}{\partial x_0} f_i(x) - \frac{\partial}{\partial x_1} f_i(x)i_x\right)\\
 f(x)\rbpartial{i} &= \frac12\left(\frac{\partial}{\partial x_0} f_i(x) +  \frac{\partial}{\partial x_1} f_i(x)i_x \right),
\end{align*}
where the arrow $\leftarrow$ indicates that the operators $\rbpartial{i}$ and $\rpartial{i}$ act from the right.
\end{definition}
If a function depends on several variables and we want to stress that these operators act in a variable $x$, then we write $\partial_{ix}$ instead of $\partial_{i}$ etc.
\pagebreak[3]
\begin{corollary}\label{IDeriv}
Let $U\subset\H$ be open and axially symmetric.
\begin{enumerate}[(i)]
\item If $f\in\lhol(U)$, then $\bpartial_i f = 0$. If $U$ is a slice domain,
then $f\in\lhol(U)$ if and only if $\bpartial_i f = 0$.
\item  If $f\in\rhol(U)$, then $f\rbpartial{i} = 0$.  If $U$ is a slice domain,
then $f\in\rhol(U)$ if and only if $f \rbpartial{i}  = 0$.
\end{enumerate}
\end{corollary}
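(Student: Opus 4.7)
For the forward direction of (i), the computation is direct. Writing $f(x) = \alpha(x_0,x_1) + i_x\beta(x_0,x_1)$ with $\alpha,\beta$ satisfying \eqref{SymCond}--\eqref{CR}, the restriction $f_{i_x}$ at $x = x_0+i_xx_1$ equals $\alpha(x_0,x_1) + i_x\beta(x_0,x_1)$, so
$$\bpartial_i f(x) = \frac{1}{2}\bigl(\partial_{x_0}\alpha - \partial_{x_1}\beta\bigr) + \frac{i_x}{2}\bigl(\partial_{x_0}\beta + \partial_{x_1}\alpha\bigr) = 0$$
by \eqref{CR}. The same computation shows $\bpartial_i \tilde f = 0$ for \emph{any} $\tilde f$ of the form \eqref{LSHol} whose components satisfy \eqref{SymCond}--\eqref{CR}; this will be used in the converse.

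For the converse, assume $U$ is a slice domain and $\bpartial_i f = 0$ on $U$. Fix $i \in \S$ and define, for all $(x_0,x_1)$ with $x_0+ix_1 \in U$ (and hence $x_0-ix_1 \in U$ by axial symmetry),
$$\alpha(x_0,x_1) := \tfrac{1}{2}\bigl[f(x_0+ix_1) + f(x_0-ix_1)\bigr], \qquad \beta(x_0,x_1) := -\tfrac{i}{2}\bigl[f(x_0+ix_1) - f(x_0-ix_1)\bigr].$$
By inspection $\alpha$ is even and $\beta$ odd in $x_1$, giving \eqref{SymCond}. Setting $g(x_0,x_1) := f(x_0+ix_1)$, the hypothesis $\bpartial_i f = 0$ translates to the planar Cauchy--Riemann equation $\partial_{x_0}g + i\partial_{x_1}g = 0$; substituting this identity at $(x_0,\pm x_1)$ into the partials of $\alpha$ and $\beta$ yields \eqref{CR} after a short calculation.

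Now set $\tilde f(x) := \alpha(x_0,x_1) + i_x\beta(x_0,x_1)$ for $x = x_0+i_xx_1 \in U$. By construction $\tilde f \in \lhol(U)$, the forward direction gives $\bpartial_i \tilde f = 0$ on $U$, and $\tilde f = f$ on $U\cap\C_i$, in particular on the nonempty real segment $U\cap\R$. To extend $f = \tilde f$ from $U\cap\R$ to all of $U$, fix $j\in\S$ arbitrary and pick $k\in\S$ with $k\perp j$, so $\H = \C_j \oplus \C_j k$. Split the restriction $(f-\tilde f)|_{U\cap\C_j} = h_1 + h_2 k$ with $h_1,h_2: U\cap\C_j \to \C_j$. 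Since left multiplication by $j\in\C_j$ preserves this decomposition, the equation $\bpartial_i(f-\tilde f) = 0$ on $U\cap\C_j$ decouples into the classical Cauchy--Riemann equations for $h_1$ and $h_2$. The slice domain hypothesis makes $U\cap\C_j$ a connected open subset of $\C_j \cong \C$ meeting $\R$, and $h_1, h_2$ vanish on $U\cap\R$, so the classical identity principle forces $h_1\equiv h_2 \equiv 0$ on $U\cap\C_j$. Thus $f = \tilde f$ on every slice, hence on $U$, and $f \in \lhol(U)$. Part (ii) follows by the mirror argument, using the right splitting $\H = \C_j \oplus k\C_j$ so that \emph{right} multiplication by $j$ preserves the decomposition.

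The main obstacle is the converse: producing globally defined $\alpha,\beta$ from the local PDE condition. The reflection formula yields $\alpha,\beta$ outright on the chosen reference plane $\C_i$, but promoting the identity $f = \alpha + i_x\beta$ to planes $\C_j \neq \C_i$ uses both halves of the slice domain hypothesis — axial symmetry (to define the reflection consistently on each $\C_j$) and connectedness of each $U\cap\C_j$ together with $U\cap\R \neq \emptyset$ (to invoke the identity principle for the splittings $h_1, h_2$).
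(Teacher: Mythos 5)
Your argument is correct, and it is essentially the route the paper itself points to: the paper omits the proof (deferring to \cite{SCalcBook} and the remark that $\bpartial_i f = 0$ on a slice domain yields the representation formula), and your construction of $\tilde f$ by reflection on the reference plane $\C_i$, followed by the Splitting Lemma and the identity principle on each slice $U\cap\C_j$ anchored at $U\cap\R$, is exactly the standard proof of that representation formula. The only point you gloss over is the real differentiability of $\tilde f$ across the real axis, where $i_x$ is discontinuous; this is harmless since $\beta$ vanishes there by oddness.
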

\begin{lemma}[Splitting Lemma]\label{SplitLem}
Let $U\subset\H$ be axially symmetric and let $i,j\in\S$ with $i\perp j$.
\begin{enumerate}[(i)]
\item If $f\in\lhol(U)$, then there exist holomorphic functions
$f_1,f_2: U\cap\C_i\to\C_i$ such that $ f_i = f_1 + f_2j$.
\item If $f\in\rhol(U)$, then there exist holomorphic functions
$f_1,f_2: U\cap\C_i\to\C_i$ such that $ f_i = f_1 + jf_2$.
\end{enumerate}
\end{lemma}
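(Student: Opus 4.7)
The plan is to produce the splitting directly from the representation \eqref{LSHol} (resp.~\eqref{RSHol}) by decomposing the $\H$-valued coefficients $\alpha$ and $\beta$ along the basis $\H = \C_i + \C_i j$ (resp.~$\H = \C_i + j\C_i$) and then using the Cauchy-Riemann system \eqref{CR} componentwise. I expect no deep obstacle here; the only subtlety is to handle the restriction to all of $U\cap\C_i$ uniformly, including the points with negative ``imaginary'' coordinate $z_1<0$, where the convention for $i_x$ in the definition does not directly produce $i$.

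First I would fix $z = z_0 + iz_1 \in U\cap\C_i$ and argue that in both cases ($z_1\geq 0$ and $z_1<0$) one has
\[
f_i(z) = \alpha(z_0,z_1) + i\,\beta(z_0,z_1)
\]
in the left case (and the analogous right-multiplied expression in the right case). For $z_1 \geq 0$ this is the definition; for $z_1<0$ one writes $z = z_0 + (-i)(-z_1)$ so that $i_z = -i$ and the coordinate is $-z_1>0$, and then invokes the parity relations \eqref{SymCond} to conclude that $\alpha(z_0,-z_1) + (-i)\beta(z_0,-z_1) = \alpha(z_0,z_1) + i\beta(z_0,z_1)$. This shows that the single formula above is valid on all of $U\cap\C_i$, with $\alpha,\beta$ understood as real-analytic (or merely $C^1$) functions of $(z_0,z_1)\in\R^2$.

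Next I would use the direct sum $\H = \C_i \oplus \C_i j$ to write $\alpha = \alpha_1 + \alpha_2 j$ and $\beta = \beta_1 + \beta_2 j$ with $\alpha_k,\beta_k$ taking values in $\C_i$. Since $\C_i$ is closed under left multiplication by $i$, collecting the $\C_i$- and $\C_i j$-components gives
\[
f_i(z) = \bigl(\alpha_1(z_0,z_1) + i\beta_1(z_0,z_1)\bigr) + \bigl(\alpha_2(z_0,z_1) + i\beta_2(z_0,z_1)\bigr) j,
\]
so the natural definitions are $f_k(z) := \alpha_k(z_0,z_1) + i\beta_k(z_0,z_1)$ for $k=1,2$. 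Each $f_k$ is a $\C_i$-valued function on $U\cap\C_i$ by construction, and it remains only to check holomorphicity.

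Finally, the Cauchy-Riemann system \eqref{CR} for the pair $(\alpha,\beta)$ decomposes componentwise along the direct sum $\H = \C_i\oplus\C_i j$ into the two independent systems
\[
\partial_{x_0}\alpha_k = \partial_{x_1}\beta_k, \qquad \partial_{x_0}\beta_k = -\partial_{x_1}\alpha_k, \qquad k=1,2,
\]
which are exactly the classical Cauchy-Riemann equations for the $\C_i$-valued functions $f_k$ in the variable $z = z_0 + i z_1$. Hence $f_1$ and $f_2$ are holomorphic on $U\cap\C_i$, which proves~(i). Part~(ii) follows by the identical argument, this time using the decomposition $\H = \C_i \oplus j\C_i$ and right multiplication by $i_x$ in \eqref{RSHol}, producing $f_i = f_1 + j f_2$ with $f_1,f_2:U\cap\C_i\to\C_i$ holomorphic.
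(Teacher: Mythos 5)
Your proof is correct. The paper itself does not prove the Splitting Lemma (it defers to the book \cite{SCalcBook}), so there is no in-paper argument to compare against; your write-up is a valid, self-contained proof working directly from the paper's definition of slice hyperholomorphicity via the form \eqref{LSHol}. The two points that need care are exactly the ones you handle: the parity relations \eqref{SymCond} guaranteeing that $f_i(z)=\alpha(z_0,z_1)+i\beta(z_0,z_1)$ holds also for $z_1<0$ (and at real points, where $\beta$ vanishes by oddness), and the fact that left multiplication by $i$ respects the direct sum $\H=\C_i\oplus\C_i j$, so that the quaternionic Cauchy--Riemann system \eqref{CR} splits into two $\C_i$-valued systems equivalent to $\bpartial f_k=0$. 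A marginally shorter route, closer to what one finds in the literature, is to skip $\alpha$ and $\beta$ altogether: by \Cref{IDeriv} one has $\bpartial_i f=0$, and writing $f_i=f_1+f_2j$ with $f_1,f_2:U\cap\C_i\to\C_i$ one computes $0=\bpartial_i f=\bpartial f_1+(\bpartial f_2)j$, whence $\bpartial f_1=\bpartial f_2=0$ by comparing components. Your version buys a little more: it does not pass through \Cref{IDeriv} and so works verbatim for the paper's broadened definition on axially symmetric sets that are not slice domains, which is precisely the setting the authors advertise in their remark.
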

\begin{remark}
Originally, in particular in \cite{SCalcBook}, slice hyperholomorphic
functions were defined as functions that satisfy $\bpartial_i f = 0$ resp. $f\rbpartial{i}  = 0$.
In principle, this leads to a larger class of functions, but on axially symmetric slice domains
both definitions are equivalent. Indeed, on an axially symmetric slice domain, $\bpartial_if = 0$
resp. $f\rbpartial{i} = 0$ implies that $f$ satisfies the representation formula
(cf. \Cref{RepFo}), which allows for a representation of $f$ of the form \eqref{LSHol} resp. \eqref{RSHol}.

The theory of slice hyperholomorphicity was therefore only developed for functions
that are defined on axially symmetric slice domains.  However, most results do not
 directly depend on the fact that the functions are defined on a slice domain, but
 rather on their representation of the form \eqref{LSHol} resp. \eqref{RSHol}.
  Hence, the definition given in this paper seems to be more appropriate since
   it allows to extend the theory also to functions defined on sets that are
   not connected or do not intersect the real line.
\end{remark}

\begin{definition}
Let $U\subset\H$ be axially symmetric. A left slice hyperholomorphic $f(x) = \alpha(x_0,x_1) + i_x\beta(x_0,x_1)$ is called intrinsic if $\alpha$ and $\beta$ are real-valued. We denote the set of intrinsic functions on $U$ by $\intrin(U)$.
\end{definition}
Note that an intrinsic function is both left and right slice hyperholomorphic because $\beta(x_0,x_1)$ commutes with the imaginary unit $i_x$. The converse is not true: the constant function $x\mapsto b\in\H\setminus\R$ is left and right slice hyperholomorphic, but it is not intrinsic.

The importance of the class of intrinsic functions is due to the fact that the multiplication and composition with intrinsic functions preserve slice hyperholomorphicity. This is not true for arbitrary slice hyperholomorphic functions.
\begin{corollary} Let $U\subset\H$ be axially symmetric.
\begin{enumerate}[(i)]
\item If $f\in\intrin(U)$ and $g\in\lhol(U)$, then $fg\in\lhol(U)$. If $f\in\rhol(U)$ and $g\in\intrin(U)$, then $fg\in\rhol(U)$.
\item If $g\in\intrin(U)$ and $f\in\lhol(g(U))$, then $f\circ g\in \lhol(U)$. If $g\in\intrin(U)$ and $f\in\rhol(g(U))$, then $f\circ g\in \rhol(U)$.
\end{enumerate}
\end{corollary}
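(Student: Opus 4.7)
The plan is to prove both parts by explicit computation: write each function in its canonical slice form, compute the product respectively the composition, and then verify the two defining conditions (compatibility and the Cauchy--Riemann system). Real differentiability is automatic in both cases, so the content is the algebra of the representation.

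For part (i), I would write $f(x) = \alpha_f(x_0,x_1) + i_x\beta_f(x_0,x_1)$ with $\alpha_f,\beta_f$ real-valued (since $f\in\intrin(U)$) and $g(x) = \alpha_g(x_0,x_1) + i_x\beta_g(x_0,x_1)$. Because $\alpha_f$ and $\beta_f$ are real they commute with $i_x$, $\alpha_g$ and $\beta_g$, so multiplying out and using $i_x^2 = -1$ gives
\[
 (fg)(x) = \bigl(\alpha_f\alpha_g - \beta_f\beta_g\bigr) + i_x\bigl(\alpha_f\beta_g + \beta_f\alpha_g\bigr).
\]
Setting $A := \alpha_f\alpha_g - \beta_f\beta_g$ and $B := \alpha_f\beta_g + \beta_f\alpha_g$, the evenness of $\alpha_f,\alpha_g$ and the oddness of $\beta_f,\beta_g$ in $x_1$ immediately yield the compatibility relations \eqref{SymCond} for $A,B$; the Cauchy--Riemann system \eqref{CR} follows by the Leibniz rule and plugging in the corresponding identities for $\alpha_f,\beta_f,\alpha_g,\beta_g$. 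The right-slice hyperholomorphic version is exactly analogous, with the roles of the factors exchanged so that only the intrinsic one need commute past the imaginary unit.

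For part (ii), the key observation is that intrinsicness of $g$ forces $g(x) = \alpha_g(x_0,x_1) + i_x\beta_g(x_0,x_1)$ to lie in the complex plane $\C_{i_x}$, so the imaginary unit attached to $g(x)$ is either $i_x$ or $-i_x$, depending on the sign of $\beta_g(x_0,x_1)$. If $\beta_g(x_0,x_1)\geq 0$ we have $(g(x))_0 = \alpha_g$, $(g(x))_1 = \beta_g$ and $i_{g(x)} = i_x$, giving
\[
 f(g(x)) = \alpha_f(\alpha_g,\beta_g) + i_x\,\beta_f(\alpha_g,\beta_g).
\]
If $\beta_g(x_0,x_1)<0$, then $(g(x))_1 = -\beta_g$ and $i_{g(x)} = -i_x$; here the compatibility condition \eqref{SymCond} for $\alpha_f,\beta_f$ ($\alpha_f(y_0,-y_1)=\alpha_f(y_0,y_1)$ and $\beta_f(y_0,-y_1)=-\beta_f(y_0,y_1)$) exactly absorbs both sign flips and returns the same expression. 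Hence $f\circ g$ admits the representation \eqref{LSHol} with $A(x_0,x_1):=\alpha_f(\alpha_g(x_0,x_1),\beta_g(x_0,x_1))$ and $B(x_0,x_1):=\beta_f(\alpha_g(x_0,x_1),\beta_g(x_0,x_1))$. Verifying \eqref{SymCond} for $A,B$ is a one-line check using the parities of $\alpha_g,\beta_g$ and of $\alpha_f,\beta_f$, and verifying \eqref{CR} for $A,B$ is the chain rule combined with the Cauchy--Riemann systems satisfied by $(\alpha_f,\beta_f)$ and $(\alpha_g,\beta_g)$.

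The only real subtlety, and the step I expect to need the most care, is precisely this sign issue in (ii): the map $x\mapsto i_x$ is not continuous across $\underline{x}=0$, and $i_{g(x)}$ need not equal $i_x$ when $\beta_g(x_0,x_1)$ is negative. Resolving it cleanly requires invoking the compatibility condition on $(\alpha_f,\beta_f)$ to show that the two case-formulas coincide, so that the resulting $A$ and $B$ are well-defined real differentiable functions on the whole of $U$ despite being built from $i_{g(x)}$. Once this is done, the remaining verifications are direct computations.
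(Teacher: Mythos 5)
Your computation is correct, and it is the standard argument: the paper states this corollary without proof (deferring to \cite{SCalcBook}), and the proof there is exactly this direct verification of \eqref{SymCond} and \eqref{CR} for the product and the composition written in slice form, including the sign analysis of $i_{g(x)}$ versus $i_x$ that you correctly identify as the only delicate point in (ii). The only detail worth adding is the observation that intrinsicness of $g$ makes $g(U)$ axially symmetric (since $g(x_0+jx_1)=\alpha_g+j\beta_g$ for every $j\in\S$), which is what makes the hypothesis $f\in\lhol(g(U))$ and the slice representation of $f$ at $g(x)$ meaningful.
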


Important examples of slice hyperholomorphic functions are power series with quaternionic coefficients: series of the form $\sum_{n=0}^{+\infty}x^na_n$ are left slice hyperholomorphic and series of the form $\sum_{n=0}^{\infty} a_nx^n$ are right slice hyperholomorphic on their domain of convergence. A power series is intrinsic if and only if its coefficients are real.

Conversely, any slice hyperholomorphic function can be expanded into a power series at any real point.
\begin{definition}
The slice-derivative of a function $f\in\lhol(U)$ is defined as
\[\sderiv  f(x) = \lim_{\C_{i_x}\ni s\to x} (s-x)^{-1}(f(s)-f(x)), \]
where $\lim_{\C_{i_x}\ni s\to x} g(s)$ is the limit as $s$ tends to $x = x_0 + i_xx_1 \in U$ in $\C_{i_x}$.
The slice-derivative of a function $f\in\rhol(U)$ is defined as
\[  f\rsderiv(x) = \lim_{\C_{i_x}\ni s\to x} (f(s)-f(x))(s-x)^{-1}. \]
\end{definition}
If a function depends on several variables and we want to stress that we take the slice-derivative in the variable $x$, then we write $\sderiv[x]$ and $\rsderiv[x]$ instead of $\sderiv$ and~$\rsderiv$.
\begin{corollary}
The slice derivative of a left (or right) slice hyperholomorphic function is again left (or right) slice hyperholomorphic. Moreover, it coincides with the derivative with respect to the real part, that is
\[\sderiv  f(x) = \frac{\partial}{\partial x_0} f(x) \quad \text{resp.}\quad f\rsderiv(x) = \frac{\partial}{\partial x_0} f(x). \]
\end{corollary}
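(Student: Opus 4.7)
The plan is to prove the identity $\partial_S f = \partial f/\partial x_0$ first by a direct computation on each complex slice, and then deduce that $\partial_S f$ is slice hyperholomorphic from this formula and from the Cauchy-Riemann system satisfied by the components $\alpha$ and $\beta$. I treat the left case in detail; the right case is symmetric after reversing the order of multiplication.

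Fix $x = x_0 + i_x x_1 \in U$ and write $f(y) = \alpha(y_0,y_1) + i_y\beta(y_0,y_1)$ as in \eqref{LSHol}. For $s = s_0 + i_x s_1 \in U\cap \C_{i_x}$ near $x$, set $h := s_0-x_0$ and $k := s_1-x_1$. Note that if $s$ is real then the choice of imaginary unit for $i_s$ is irrelevant because the compatibility condition \eqref{SymCond} forces $\beta(s_0,0)=0$, so $f$ is single-valued. Since $s-x = h + i_x k$ belongs to $\C_{i_x}$, a direct computation gives $(s-x)^{-1} = (h - i_x k)/(h^2+k^2)$. Expanding $\alpha$ and $\beta$ to first order in $(h,k)$ and using \eqref{CR} in the form $\partial_{x_1}\alpha = -\partial_{x_0}\beta$ and $\partial_{x_1}\beta = \partial_{x_0}\alpha$ at $(x_0,x_1)$, the numerator $f(s)-f(x)$ regroups into $(h+i_xk)\bigl(\partial_{x_0}\alpha(x_0,x_1) + i_x \partial_{x_0}\beta(x_0,x_1)\bigr) + o(|h|+|k|)$. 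Since $(h - i_x k)(h + i_x k) = h^2 + k^2$ in $\C_{i_x}$, the prefactor cancels, the limit exists, and
\[
\sderiv f(x) = \partial_{x_0}\alpha(x_0,x_1) + i_x \partial_{x_0}\beta(x_0,x_1) = \frac{\partial}{\partial x_0} f(x).
\]

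With this identity in hand, I verify $\sderiv f \in \lhol(U)$ by setting $\tilde\alpha := \partial_{x_0}\alpha$ and $\tilde\beta := \partial_{x_0}\beta$, so that $\sderiv f = \tilde\alpha + i_x \tilde\beta$. The symmetry conditions \eqref{SymCond} for $(\tilde\alpha,\tilde\beta)$ follow at once by differentiating those for $(\alpha,\beta)$ in $x_0$. The Cauchy-Riemann system for $(\tilde\alpha,\tilde\beta)$ reduces, by Schwarz's theorem on mixed second partials together with a second application of \eqref{CR}, to chains such as $\partial_{x_0}\tilde\alpha = \partial_{x_0}^2\alpha = \partial_{x_0}\partial_{x_1}\beta = \partial_{x_1}\partial_{x_0}\beta = \partial_{x_1}\tilde\beta$, and symmetrically for the second identity.

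The main obstacle is regularity: the definition only requires $\alpha$ and $\beta$ to be real differentiable, whereas the manipulations above demand $C^2$ regularity to form $\tilde\alpha,\tilde\beta$ and to invoke Schwarz. I bypass this using the Splitting Lemma \ref{SplitLem}: on any slice $U\cap\C_i$ the restriction $f_i$ decomposes as $f_1 + f_2 j$ with $f_1,f_2\colon U\cap\C_i\to\C_i$ holomorphic, hence real analytic, forcing $\alpha$ and $\beta$ to be smooth in both variables on each slice and legitimizing the second-order manipulations. Alternatively, one may argue entirely slicewise by noting that $\sderiv f|_{\C_{i_x}} = f_1' + f_2' j$ is again holomorphic on each slice, which via the equivalence between \eqref{LSHol} and the slicewise condition from Corollary \ref{IDeriv} yields the claim.
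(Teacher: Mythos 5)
Your argument is correct, but note that the paper itself offers no proof of this corollary: it is recalled as a known preliminary, with all proofs of that subsection deferred to the book \cite{SCalcBook}, so there is no internal argument to compare yours against. Your self-contained route --- a first-order Taylor expansion of $\alpha$ and $\beta$ combined with the Cauchy--Riemann system \eqref{CR} to factor $f(s)-f(x)$ as $(h+i_xk)\bigl(\partial_{x_0}\alpha+i_x\partial_{x_0}\beta\bigr)+o(|h|+|k|)$, so that left multiplication by $(s-x)^{-1}$ cancels the prefactor --- is the standard computation and is carried out correctly; in particular you are right that the scalar $h+i_xk\in\C_{i_x}$ commutes with $i_x$, which is what lets you collect the factor on the left, and that $|(s-x)^{-1}|\,o(|h|+|k|)\to 0$. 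The verification that $(\partial_{x_0}\alpha,\partial_{x_0}\beta)$ again satisfies \eqref{SymCond} and \eqref{CR} is sound, and you correctly identify the only genuine issue, namely the $C^2$ regularity needed for Schwarz's theorem, which \Cref{SplitLem} settles since holomorphic components are real analytic. One small caution on your closing alternative: the converse direction of \Cref{IDeriv} (that $\bpartial_i f=0$ implies $f\in\lhol(U)$) is asserted only for slice domains, whereas the paper's definition of $\lhol(U)$ allows arbitrary axially symmetric open sets; so the purely slicewise shortcut does not cover the general case, and your main argument via $(\tilde\alpha,\tilde\beta)$ is the one that proves the statement in the stated generality.
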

\begin{theorem}
If $f$ is left slice hyperholomorphic on the ball $B_r(\alpha)$ with radius $r$ centered at $\alpha\in\R$, then
\[f(x) = \sum_{n=0}^{+\infty} (x-\alpha)^n \frac{1}{n!}\sderiv ^n f(\alpha)\quad\text{for }\ \ x\in B_r(\alpha).\]
If $f$ is right slice hyperholomorphic on $B(r,\alpha)$, then
\[f(x) = \sum_{n=0}^{+\infty}\frac{1}{n!} f\rsderiv ^n(\alpha) (x-\alpha)^n \quad\text{for }\ \ x\in B_r(\alpha).\]
\end{theorem}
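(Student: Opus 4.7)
The plan is to reduce to the classical Taylor expansion on a complex plane via the Splitting Lemma and then propagate the identity to the full quaternionic ball. I treat the left slice hyperholomorphic case; the right case is entirely analogous.

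First I would fix $i\in\S$ and choose $j\in\S$ with $i\perp j$, and consider the restriction $f_i$ of $f$ to $B_r(\alpha)\cap\C_i$. Since $\alpha\in\R$ the set $B_r(\alpha)$ is axially symmetric and $B_r(\alpha)\cap\C_i$ is an open disc in $\C_i$ centered at $\alpha$. By the Splitting Lemma (\Cref{SplitLem}) there exist holomorphic functions $F_1,F_2:B_r(\alpha)\cap\C_i\to\C_i$ with $f_i = F_1+F_2 j$. Each $F_k$ is a $\C_i$-valued function holomorphic on a $\C_i$-disc, so by the classical Taylor theorem I can expand
\[
F_k(z)=\sum_{n=0}^{+\infty}(z-\alpha)^n\,c_{k,n},\qquad c_{k,n}=\frac{1}{n!}\frac{\partial^n}{\partial z^n}F_k(\alpha)\in\C_i,
\]
uniformly on compact subsets of the disc. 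Setting $b_n:=c_{1,n}+c_{2,n}j\in\H$, I obtain
\[
f_i(z)=\sum_{n=0}^{+\infty}(z-\alpha)^n b_n\qquad\text{for all }z\in B_r(\alpha)\cap\C_i.
\]

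Next I must identify the coefficients $b_n$ with $\tfrac{1}{n!}\sderiv^n f(\alpha)$. By the corollary characterizing the slice derivative, $\sderiv f(x)=\partial_{x_0}f(x)$, so $\sderiv^n f(\alpha)$ equals the $n$-th real-variable derivative of $f$ at $\alpha$, which in turn equals the $n$-th complex derivative of $f_i$ at $\alpha$ because $\alpha$ is real. Differentiating the series term by term (legitimate on the interior of the disc of convergence) and evaluating at $\alpha$ yields $\tfrac{1}{n!}\sderiv^n f(\alpha)=b_n$, as required. Crucially, because $\alpha$ is real, the coefficients $\tfrac{1}{n!}\sderiv^n f(\alpha)$ are independent of the choice of $i\in\S$.

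Finally I must pass from the slice $\C_i$ to the full ball $B_r(\alpha)\subset\H$. The power series $g(x):=\sum_{n=0}^{+\infty}(x-\alpha)^n\tfrac{1}{n!}\sderiv^n f(\alpha)$ converges on a quaternionic ball around $\alpha$; a standard Abel-type argument using the convergence on $B_r(\alpha)\cap\C_i$ shows that its radius of convergence is at least $r$, so $g$ is left slice hyperholomorphic on $B_r(\alpha)$. Both $f$ and $g$ are left slice hyperholomorphic on the axially symmetric open set $B_r(\alpha)$ and coincide on $B_r(\alpha)\cap\C_i$ by the computation above. This is the step I expect to be the most delicate: concluding $f\equiv g$ on all of $B_r(\alpha)$ requires the representation formula (a slice hyperholomorphic function on an axially symmetric set is determined by its values on any single complex plane $\C_i$, because the values of $\alpha(x_0,x_1)$ and $\beta(x_0,x_1)$ in \eqref{LSHol} are recovered from $f_i$). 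Applying this formula to $f-g$ yields $f=g$ on $B_r(\alpha)$, which completes the proof. The right slice hyperholomorphic case is handled by the same argument, using the second part of the Splitting Lemma $f_i=F_1+jF_2$ and the right-side slice derivative.
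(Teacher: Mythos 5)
The paper does not prove this theorem itself---it defers to \cite{SCalcBook} for all results in this subsection---so there is no internal proof to compare against; judged on its own, your argument is correct and is essentially the standard one: split $f_i=F_1+F_2j$ via \Cref{SplitLem}, expand the holomorphic components classically, identify the coefficients using $\sderiv f=\partial_{x_0}f$ at the real center $\alpha$, and propagate from $\C_i$ to the whole ball with \Cref{RepFo}. The only step worth tightening is the last one: rather than first establishing that the series $g$ is left slice hyperholomorphic on all of $B_r(\alpha)$ (which does follow, since $\limsup|b_n|^{1/n}\leq 1/r$ from the convergence of the split components and the paper asserts power series are slice hyperholomorphic on their domain of convergence), you can apply \Cref{RepFo} directly to $f$ and use the elementary identity
\[
\tfrac12(1-i_xi)(x_i-\alpha)^n+\tfrac12(1+i_xi)(\overline{x_i}-\alpha)^n=(x-\alpha)^n ,
\]
valid because $\alpha$ is real, together with absolute convergence to interchange the finite linear combination with the sum; this avoids invoking any identity principle for $f-g$.
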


As a consequence of the slice structure of slice hyperholomorphic functions, their values are uniquely determined by their values on an arbitrary complex plane. Consequently, any function that is holomorphic on a suitable subset of a complex plane possesses an unique slice hyperholomorphic extension.
\begin{theorem}[Representation Formula]\label{RepFo}
Let $U\subset\H$ be axially symmetric and let $i\in\S$. For any $x = x_0 + i_x x_1\in U$ set $x_i := x_0 + ix_1$. If $f\in\lhol(U)$. Then
\[f(x) = \frac12(1-i_xi)f(x_i) + \frac12(1+i_xi)f(\overline{x_i}).\]
If $f\in\rhol(U)$, then
\[f(x) = f(x_i)(1-ii_x)\frac12 + f(\overline{x_i})(1+ii_x)\frac12.\]
\end{theorem}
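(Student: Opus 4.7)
The plan is to verify the identity directly from the structural form of a slice hyperholomorphic function. Since $f\in\lhol(U)$ and $U$ is axially symmetric, by definition $f$ admits the representation $f(x)=\alpha(x_0,x_1)+i_x\beta(x_0,x_1)$ with $\H$-valued functions $\alpha,\beta$ satisfying the compatibility conditions \eqref{SymCond}. Evaluating at the two points of $[x]$ lying in $\C_i$, namely $x_i=x_0+ix_1$ and $\overline{x_i}=x_0-ix_1=x_0+(-i)x_1$ (which belongs to the form demanded by \eqref{LSHol} since $-i\in\S$, and where if needed one invokes the parities $\alpha(x_0,-x_1)=\alpha(x_0,x_1)$, $\beta(x_0,-x_1)=-\beta(x_0,x_1)$), I read off
\[ f(x_i)=\alpha(x_0,x_1)+i\beta(x_0,x_1),\qquad f(\overline{x_i})=\alpha(x_0,x_1)-i\beta(x_0,x_1).\]

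These two relations form a $2\times 2$ linear system over $\H$ in the unknowns $\alpha(x_0,x_1)$ and $\beta(x_0,x_1)$. Solving gives $\alpha(x_0,x_1)=\tfrac12\bigl(f(x_i)+f(\overline{x_i})\bigr)$ and, since $i^{-1}=-i$ and $i$ must be left-multiplied past the quaternion-valued $\beta$, also $\beta(x_0,x_1)=-\tfrac12\,i\bigl(f(x_i)-f(\overline{x_i})\bigr)$. Substituting back into $f(x)=\alpha(x_0,x_1)+i_x\beta(x_0,x_1)$ and collecting terms using $i^2=-1$ rearranges the expression into
\[ f(x)=\tfrac12(1-i_xi)f(x_i)+\tfrac12(1+i_xi)f(\overline{x_i}),\]
which is the left representation formula.

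The right slice hyperholomorphic case is entirely analogous: from $f(x)=\alpha(x_0,x_1)+\beta(x_0,x_1)i_x$ the same two evaluations produce $f(x_i)=\alpha+\beta i$ and $f(\overline{x_i})=\alpha-\beta i$. Solving for $\alpha$ and $\beta$ now requires right-multiplication by $-i$, and inserting the result back gives the stated formula with the coefficients $(1\mp i i_x)/2$ appearing on the right of the function values. The only genuine point of care throughout is the non-commutativity of $\H$: because $\alpha$ and $\beta$ are quaternion-valued, every step must respect whether $i$ multiplies from the left or the right. Beyond this bookkeeping, the argument is just a direct linear-algebra computation inside $\H$, using no analytic input beyond the structural definition of slice hyperholomorphicity; the Cauchy--Riemann system \eqref{CR} plays no role in this particular proof.
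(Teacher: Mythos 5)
Your proof is correct. Note first that the paper itself does not prove Theorem \ref{RepFo}: it defers all proofs of this subsection to the book \cite{SCalcBook}. With the definition of slice hyperholomorphicity adopted in this paper --- namely that $f$ \emph{is given} in the form \eqref{LSHol} with $\alpha,\beta$ satisfying \eqref{SymCond} --- your argument is exactly the right one, and indeed the only natural one: evaluating at $x_i$ and $\overline{x_i}$ (both of which lie in $U$ by axial symmetry), solving the resulting $2\times2$ system for $\alpha$ and $\beta$ with due care for left versus right multiplication by $i$, and substituting back. You correctly observe that \eqref{CR} plays no role and that the parity conditions \eqref{SymCond} are what make $f(\overline{x_i})=\alpha-i\beta$ unambiguous, whether one reads $\overline{x_i}$ as $x_0+(-i)x_1$ or as $x_0+i(-x_1)$. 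The one contrast worth recording is with the proof in \cite{SCalcBook}, where slice hyperholomorphic functions are instead defined as solutions of $\bpartial_i f=0$ on axially symmetric slice domains; there the representation formula is a genuine theorem requiring analytic input (one shows the right-hand side is independent of the choice of $i$ and agrees with $f$ on a complex plane, using the slicewise Cauchy--Riemann equations), and it is precisely what \emph{produces} the form \eqref{LSHol}, as the paper's own remark after Lemma \ref{SplitLem} points out. Under the present definition the logical content has been moved into the hypothesis, and your purely algebraic derivation is complete as it stands.
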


\begin{corollary}\label{extLem}
Let $i\in\S$ and let $f:O\to\H$ be real differentiable, where $O$ is a domain in $\C_i$ that is symmetric with respect to the real axis.
\begin{enumerate}[(i)]
\item The axially symmetric hull $[O]: = \bigcup_{z\in O}[z]$ of $O$ is an axially symmetric slice domain.
\item If $f$ satisfies $\frac{\partial}{\partial x_0}f + i \frac{\partial}{\partial x_1} f= 0$, then there exists a unique left slice hyperholomorphic extension $\ext_L(f)$ of $f$ to $[O]$.
\item If $f$ satisfies $\frac{\partial}{\partial x_0}f +  \frac{\partial}{\partial x_1}fi = 0$, then there exists a unique right slice hyperholomorphic extension $\ext_R(f)$ of $f$ to $[O]$.
\end{enumerate}
\end{corollary}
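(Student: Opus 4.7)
The plan is to build both extensions explicitly from the Representation Formula \Cref{RepFo} and then verify the required properties by direct computation; once the topological part (i) is in hand, the content of (ii) and (iii) is essentially forced.

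For part (i), axial symmetry of $[O]$ is immediate from the definition. That $O$ meets $\R$ is a small topological lemma: a nonempty connected open subset of $\C_i$ symmetric under $z\mapsto \bar z$ cannot avoid the real axis without splitting into disjoint pieces lying in the two open half-planes. Openness of $[O]$ then follows from $[O] = \Psi^{-1}(O)$, where $\Psi:\H\to\C_i$, $y\mapsto \Re(y)+i|\underline y|$, is continuous (and the equality uses the symmetry of $O$). Finally, for each $j\in\S$ the $\R$-linear bijection $\C_i\to\C_j$, $x_0+ix_1\mapsto x_0+jx_1$, restricts to a homeomorphism $O\to[O]\cap\C_j$, yielding connectedness of the slice; in particular $\C_i\cap[O]=O$, which I will need for uniqueness.

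For part (ii), I will \emph{define}
\[\ext_L(f)(x) := \tfrac12(1-i_xi)f(x_i) + \tfrac12(1+i_xi)f(\overline{x_i}),\qquad x = x_0+i_xx_1\in[O],\]
with $x_i:=x_0+ix_1$. For $x\in\R$ both summands collapse to $\tfrac12 f(x_0)$, so the choice of $i_x$ is immaterial and the formula is well-defined; for $z\in O$, splitting on whether $z$ lies in the upper or lower half-plane of $\C_i$ (so $i_z=i$ or $-i$) and invoking the symmetry of $O$ yields $\ext_L(f)(z)=f(z)$. To prove $\ext_L(f)\in\lhol([O])$, I rearrange the definition as $\ext_L(f)(x) = \alpha(x_0,x_1)+i_x\beta(x_0,x_1)$ with
\[\alpha(x_0,x_1) = \tfrac12\bigl(f(x_0+ix_1)+f(x_0-ix_1)\bigr),\quad \beta(x_0,x_1) = -\tfrac{i}{2}\bigl(f(x_0+ix_1)-f(x_0-ix_1)\bigr),\]
read off \eqref{SymCond} by inspection, and verify \eqref{CR} by substituting the rewritten hypothesis $\partial_{x_1}f = i\,\partial_{x_0}f$ into the partials of $\alpha,\beta$ and simplifying via $i^2=-1$. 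Uniqueness is then forced by the Representation Formula on $[O]$: any left slice hyperholomorphic extension of $f$ must equal the right-hand side above, since $\C_i\cap[O]=O$.

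Part (iii) is the mirror image: define $\ext_R(f)$ from the right-hand Representation Formula and repeat the argument with $\rbpartial{i}$ in place of $\bpartial_i$. The main technical care lies in handling the non-commutativity of $\H$ during the CR verification; the saving grace is that the factors appearing in the computation all collapse to real scalars after applying $i^2=-1$, so no ordering subtleties actually arise and the only real risk is mixing up the sides.
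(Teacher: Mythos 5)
Your proof is correct and follows exactly the route the paper intends: the corollary is placed as a consequence of the Representation Formula (\Cref{RepFo}), whose right-hand side is precisely your defining formula for $\ext_L(f)$, and the paper itself gives no proof but defers to \cite{SCalcBook}, where the argument is the same construction-plus-uniqueness you carry out. One small wording slip: at real points the two summands do not individually collapse to $\tfrac12 f(x_0)$ (each equals $\tfrac12(1\mp i_xi)f(x_0)$); it is their \emph{sum} $f(x_0)$ that is independent of the arbitrary choice of $i_x$, which is all you need for well-definedness.
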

\begin{remark}
If $f$ has a left and a right slice hyperholomorphic extension, they do not necessarily coincide. Consider for instance the function $z\mapsto bz$ on $\C_i$ with a constant $b\in\C_i\setminus\R$. Its left slice hyperholomorphic extension to $\H$ is $x\mapsto xb$, but its right
slice hyperholomorphic extension is $x\mapsto bx$.
\end{remark}

\subsection{Quaternionic linear operators}
In this section, we consider bounded linear operators on a separable quaternionic Hilbert space, even thought
some definitions and results hold also for quaternionic Banach spaces.

\begin{definition}
Let $\hil$ be a quaternionic right vector space together with a scalar product  $\langle\cdot,\cdot\rangle: \hil\times\hil\to \H$ with the following properties:
\begin{enumerate}[(i)]
\item positivity: $\langle x, x\rangle \geq 0 $ and $\langle x,x\rangle  = 0$ if and only if $x = 0$
\item right-linearity: $\langle x, ya + z \rangle = \langle x,y\rangle a + \langle x,z\rangle $ for all $x,y,z\in\hil$ and all $a\in\H$.
\item quaternionic hermiticity: $\langle x,y\rangle = \overline{\langle y,x\rangle}$ for all $x,y \in\hil$.
\end{enumerate}
If $\hil$ is complete with respect to the norm
\[\|x\|_{\hil} := \sqrt{\langle x, x\rangle},\]
then $(\hil, \langle\cdot,\cdot\rangle)$ is called a quaternionic Hilbert space.
\end{definition}
Terms such as orthogonality, orthonormal basis etc. are defined as in the complex case. In the following, we shall always assume that $\hil$ is separable.

We consider now bounded quaternionic right linear operators on $\hil$.
\begin{definition} A quaternionic right linear operator $T:\hil\to\hil$ is called bounded if $\|T\| := \sup_{\|x\| = 1} \|Tx\| <+\infty$.
We denote the set of all bounded quaternionic right linear operators on $\hil$ by $\B(\hil)$.
\end{definition}

The space $\B(\hil)$  together with the operator norm is a real Banach space.
Observe that there does not exists any natural multiplication of operators with scalars in
$\H\setminus\R$ because there is no left-multiplication defined on $\hil$. Hence, the operator $Ts$,
 which is supposed to act as $Ts(v) = T(sv)$, has no meaning.

Let $T\in\boundOP(\hil)$. For $s\in\H$, we set
\[ Q_s(T) := T^2 - 2\Re(s)T + \id |s|^2,\]
where $\id$ denotes the identity operator.
\begin{definition}\label{SSpec}
We define the $S$-resolvent set  of an operator $T\in\boundOP(\hil)$ as
\[\rho_S(T):= \{ s\in\H: Q_s(T)^{-1}\in\boundOP(\hil)\}\]
and the $S$-spectrum of $T$ as
\[\sigma_S(T):=\H\setminus\rho_S(T).\]
\end{definition}
\begin{remark}
For operators acting on a two-sided quaternionic Banach space, the left and right $S$-resolvent operators are defined by formally replacing the variable $x$ in the left and right Cauchy kernel by the operator $T$. This motivates the definition of the $S$-resolvent set and the $S$-spectrum as it is done in \Cref{SSpec}, cf. \cite{SCalcBook}.
\end{remark}
\begin{theorem}[See \cite{SCalcBook}]
Let $T\in\boundOP(\hil)$. Then $\sigma_S(T)$ is an axially symmetric,
compact and nonempty subset of $\overline{B_{\|T\|}(0)}$.
\end{theorem}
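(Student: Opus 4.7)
The plan is to verify axial symmetry, the norm bound $\sigma_S(T) \subseteq \overline{B_{\|T\|}(0)}$, closedness, and non-emptiness in that order; the first three follow by direct arguments, while non-emptiness is the main point. Since $Q_s(T) = T^2 - 2\Re(s) T + |s|^2 \id$ depends on $s$ only through $\Re(s)$ and $|s|$ — both constant on $[s]$ — axial symmetry is immediate. For $|s| > \|T\|$ I would fix $i := i_s$ so that $s, \overline{s} \in \C_i$. Using $s + \overline{s} = 2\Re(s) \in \R$ and $s\,\overline{s} = |s|^2 \in \R$, a direct expansion gives the factorization $Q_s(T) = (T - s\,\id)(T - \overline{s}\,\id)$, where $s\,\id$ and $\overline{s}\,\id$ denote the operators of right multiplication by $s$ and $\overline{s}$ (well-defined on a right $\H$-module). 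Each factor is invertible via a norm-convergent Neumann-type series, e.g.\ $(T - s\,\id)^{-1} v = -\sum_{n \ge 0} T^n v\, s^{-n-1}$, with operator norm at most $(|s| - \|T\|)^{-1}$; hence $s \in \rho_S(T)$. Closedness then follows from the continuity of $s \mapsto Q_s(T)$ into $\B(\hil)$ and the openness of the invertibles in $\B(\hil)$, so $\sigma_S(T)$ is compact.

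For non-emptiness, I would argue by contradiction. If $\sigma_S(T) = \emptyset$ then the left $S$-resolvent $S_L^{-1}(s, T) = -Q_s(T)^{-1}(T - \overline{s}\,\id)$ is defined on all of $\H$, and the series from the previous step rearranges into $S_L^{-1}(s, T) v = \sum_{n \ge 0} T^n v\, s^{-n-1}$ for $|s| > \|T\|$, so $\|S_L^{-1}(s, T)\| \to 0$ as $|s| \to \infty$. For any $v, w \in \hil$ the scalar function $f(s) := \langle w, S_L^{-1}(s, T) v \rangle$ is then left slice hyperholomorphic on all of $\H$ and vanishes at infinity. Fixing $i \in \S$ and $j \in \S$ with $j \perp i$, the Splitting Lemma (\Cref{SplitLem}) yields $f_i = f_1 + f_2 j$ with $f_1, f_2 : \C_i \to \C_i$ entire and both vanishing at infinity (since $|f_1|, |f_2| \le |f_i|$); classical Liouville forces $f_1 = f_2 = 0$, hence $f|_{\C_i} \equiv 0$. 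As $i \in \S$ was arbitrary, $f \equiv 0$ on $\H$; varying $v$ and $w$ gives $S_L^{-1}(s, T) = 0$ identically, and then $T - \overline{s}\,\id = -Q_s(T)\, S_L^{-1}(s, T) = 0$ for every $s \in \H$, which is absurd.

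The substantive step is non-emptiness, and it rests on two ingredients beyond the elementary Neumann-series estimates: the operator-valued $S$-resolvent must be genuinely left slice hyperholomorphic in $s$ (so that the scalar function $f$ above inherits this property), and a Liouville-type principle must be available for entire scalar slice hyperholomorphic functions. The first is part of the $S$-functional calculus developed in \cite{SCalcBook}, while the second reduces via the Splitting Lemma to the classical complex Liouville theorem, as illustrated above.
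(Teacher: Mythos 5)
The paper itself offers no proof of this theorem: it is quoted from \cite{SCalcBook}, and your argument follows what is essentially the standard route of that reference --- axial symmetry from the fact that $Q_s(T)$ depends only on $\Re(s)$ and $|s|$, the factorization $Q_s(T)=(T-s\id)(T-\overline{s}\id)$ with Neumann series for the bound $\sigma_S(T)\subseteq\overline{B_{\|T\|}(0)}$, continuity of $s\mapsto Q_s(T)$ plus openness of the invertibles for closedness, and a Liouville-type argument applied to the $S$-resolvent for non-emptiness. The proof is correct in substance, but two points need repair. First, a chirality slip: in the expansion $S_L^{-1}(s,T)v=\sum_{n\geq 0}T^nv\,s^{-n-1}$ the ``coefficients'' $T^nv$ sit to the \emph{left} of the intrinsic powers $s^{-n-1}$, so $s\mapsto S_L^{-1}(s,T)$ is \emph{right} slice hyperholomorphic in $s$ (as is $f(s)=\langle w,S_L^{-1}(s,T)v\rangle$, since the inner product is right-linear in its second slot), not left as you assert. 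This is harmless for the conclusion --- part (ii) of the Splitting Lemma (\Cref{SplitLem}) gives $f_i=f_1+jf_2$ with $f_1,f_2$ entire and $|f_1|,|f_2|\leq|f_i|$ because $|z_1+jz_2|^2=|z_1|^2+|z_2|^2$ for $z_1,z_2\in\C_i$, and classical Liouville finishes exactly as you say --- but the statement as written is wrong and should be corrected.

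Second, a point you gloss over: the maps $v\mapsto Tv-vs$ and $v\mapsto Tv-v\overline{s}$ are \emph{not} right $\H$-linear when $s\notin\R$, since $(vq)s\neq(vs)q$ in general; they are only additive and right $\C_{i_s}$-linear, so they do not belong to $\B(\hil)$ as the paper defines it. The factorization argument still works, but you should say explicitly that composing the two Neumann-series inverses produces a bounded additive two-sided inverse of $Q_s(T)$, and that because $Q_s(T)$ itself is right $\H$-linear (its coefficients $2\Re(s)$ and $|s|^2$ are real) and bijective, this inverse is automatically right $\H$-linear and hence lies in $\B(\hil)$, which is what membership in $\rho_S(T)$ per \Cref{SSpec} requires. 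The same caveat applies to $S_L^{-1}(s,T)$ in the non-emptiness step: it is a bounded additive map rather than an element of $\B(\hil)$, but $f(s)=\langle w,S_L^{-1}(s,T)v\rangle$ is still well defined, it vanishes at infinity by your norm estimate, and the contradiction $Tv=v\overline{s}$ for all $v$ and all $s$ is indeed absurd. With these two adjustments the proof is complete.
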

In \cite{GMP}, the following natural partition of the $S$-spectrum was introduced:
\begin{enumerate}
\item the spherical point spectrum of $T$:
\[\sigma_{pS}(T) := \{ s\in\H: \ker(Q_s(T))\neq\{0\} \}.\]
\item the spherical residual spectrum of $T$:
\[\sigma_{rS}(T) := \{ s\in\H: \ker(Q_s(T)) = \{0\},\, \overline{\ran(Q_s(T))} \neq \hil)\]
\item the spherical continuous spectrum of $T$:
\[\sigma_{cS}(T) := \{ s\in\H: \ker (Q_S(T)) = \{0\},\ \overline{\ran(Q_s(T))}
= \hil,\ Q_s(T)^{-1}\notin \boundOP(\hil)\}.\]
\end{enumerate}

\begin{theorem}
Let $T\in\boundOP(\hil)$. Then $s\in\sigma_{pS}(T)$ if and only if $s$ is a right
 eigenvalue of $T$, i.e. there exits $x\in \hil$ such that $Tx = xs$.
\end{theorem}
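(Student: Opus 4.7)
The plan is to prove the two implications separately. The forward direction is a short verification; the converse requires a small trick plus a twist to handle one degenerate case.

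\textbf{Forward direction.} Assume $Tx = xs$ for some nonzero $x\in\hil$. By right linearity, $T^2x = T(xs) = (Tx)s = xs^2$. Since $\Re(s)\in\R$ and $|s|^2\in\R$ commute with every quaternion, a direct computation gives
\[
Q_s(T)x \;=\; xs^2 - 2\Re(s)\,xs + |s|^2\,x \;=\; x\bigl(s^2 - 2\Re(s)s + |s|^2\bigr) \;=\; 0,
\]
where the polynomial in $s$ vanishes because $s^2 - (s+\overline{s})s + s\overline{s} = 0$ for any quaternion. Hence $x\in\ker Q_s(T)$ and $s\in\sigma_{pS}(T)$.

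\textbf{Converse.} Given $x\in\ker Q_s(T)\setminus\{0\}$, the natural candidate for a right eigenvector associated with $s$ is
\[
y \;:=\; Tx - x\overline{s}.
\]
Expanding and using that $\Re(s)$ and $|s|^2$ are real,
\[
Ty - ys \;=\; T^2 x - (Tx)\overline{s} - (Tx)s + x\overline{s}s \;=\; T^2x - 2\Re(s)\,Tx + |s|^2 x \;=\; Q_s(T)x \;=\; 0,
\]
so $Ty = ys$. If $y\neq 0$ we are done.

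\textbf{The case $y = 0$.} If $y = 0$, then $Tx = x\overline{s}$. When $s\in\R$ we have $\overline{s} = s$ and $x$ itself is the sought eigenvector. If $s\notin\R$ then $\overline{s}\in[s]$, so by \Cref{Turn} there exists $q\in\H$ with $|q| = 1$ and $q^{-1}\overline{s}q = s$ (concretely one can take $q\in\S$ anticommuting with $i_s$). Since $q$ is a nonzero quaternion, $xq\neq 0$, and by right linearity
\[
T(xq) \;=\; (Tx)q \;=\; x\overline{s}q \;=\; (xq)(q^{-1}\overline{s}q) \;=\; (xq)s,
\]
so $xq$ is a nonzero right eigenvector of $T$ with eigenvalue $s$.

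\textbf{Main obstacle.} The only genuinely delicate point is the case $y = 0$, which forces one to exploit the axial symmetry of the quaternions via \Cref{Turn}. Morally, the converse is the noncommutative analogue of the factorization $t^2 - 2\Re(s)t + |s|^2 = (t-s)(t-\overline{s})$: in the commutative setting either factor annihilates some vector, while in the quaternionic setting the construction $y = Tx - x\overline{s}$ replaces one factor, and the similarity $q^{-1}\overline{s}q = s$ allows one to pass from $\overline{s}$ to $s$ on the right when needed.
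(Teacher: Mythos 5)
Your proof is correct. The paper itself states this theorem without proof (it is quoted from the literature on the partition of the $S$-spectrum), so there is no in-paper argument to compare against; your argument is the standard one: the identity $s^2-2\Re(s)s+|s|^2=0$ gives the forward direction, the auxiliary vector $y=Tx-x\overline{s}$ gives the converse, and the degenerate case $y=0$ is correctly repaired by rotating the eigenvalue within the sphere $[s]$ via \Cref{Turn}.
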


The adjoint of an operator and related properties are defined analogue to the complex case.
\begin{definition}
Let $T\in\boundOP(\hil)$. The adjoint $T^*$ of $T$ is the unique operator that satisfies
\[ \langle T^* x, y\rangle = \langle x, Ty\rangle\quad\forall x,y\in\hil.\]
\end{definition}

\begin{definition}
An operator $T\in\boundOP(\hil)$ is called
\begin{enumerate}[(i)]
\item selfadjoint, if $T^* = T$
\item anti-selfadjoint, if $T^* = - T$
\item positive if $\langle x, Tx\rangle \geq 0$ for all $x\in\hil$
\item normal if $T^*T = TT^*$
\item unitary if $T^* = T^{-1}$.
\end{enumerate}
\end{definition}
If $T\in\boundOP(\hil)$ is positive, then there exists a unique positive operator $S$ such that $S^2 = T$. We denote $\sqrt{T} := S$. Moreover, for any operator $T\in\boundOP(\hil)$, the operator $T^*T$ is positive and we can define
\[|T| := \sqrt{T^*T},\]
which allows to prove the polar decomposition theorem.
\begin{theorem}\label{polar}
Let $\hil$ be a quaternionic Hilbert space and let $T \in \mathcal{B}(\hil)$. Then there exist two unique operators $W$ and $P$ in $\mathcal{B}(\hil)$ such that
\begin{enumerate}[(i)]
\item $T = WP$
\item $P \geq 0$
\item $\ker(P) \subset \ker(W)$
\item $\forall u \in \ker(P)^\perp: \|Wu\| = \|u\|$.
\end{enumerate}
The operators $W$ and $P$ have the following properties
\begin{enumerate}[(a)]
\item $P = |T|$
\item If $T$ is normal then $W$ defines a unitary operator in $\mathcal{B}(\overline{\ran(T)})$
\item $W$ is (anti) self-adjoint if  $T$ is.
\end{enumerate}
\end{theorem}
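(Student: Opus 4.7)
The plan follows the classical construction of the polar decomposition, carefully checking that every step survives in the quaternionic right-linear setting. I would first set $P := |T| = \sqrt{T^*T}$, which is a well-defined positive element of $\B(\hil)$ by the square-root discussion preceding the statement. The inner-product computation
\[
\|Px\|^2 = \langle Px, Px\rangle = \langle x, P^2 x\rangle = \langle x, T^*Tx\rangle = \|Tx\|^2,
\]
uses only the right-linearity and quaternionic Hermiticity of $\langle\cdot,\cdot\rangle$ and gives $\|Px\| = \|Tx\|$, hence $\ker(P) = \ker(T)$. This lets me define an isometry $W_0\colon \ran(P)\to\hil$ by $W_0(Px):=Tx$, extend it by continuity to $\overline{\ran(P)}$, and then extend by zero on $\ker(P) = \overline{\ran(P)}^{\perp}$ (using selfadjointness of $P$) to obtain a bounded right-linear operator $W\in\B(\hil)$ satisfying $T=WP$, $\ker(P)\subset\ker(W)$ and $\|Wu\|=\|u\|$ on $\ker(P)^\perp$. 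Together with (a), this establishes existence.

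For uniqueness, suppose $T=W'P'$ also satisfies (i)--(iv). Condition (iv) together with $\ker(P')^\perp=\overline{\ran(P')}$ (by selfadjointness of $P'$) forces $W'^{*}W'$ to be the identity on $\overline{\ran(P')}$, so $W'^{*}W'P'=P'$ and therefore $T^*T=P'W'^{*}W'P'=(P')^2$. Uniqueness of the positive square root yields $P'=P$, and then $W'P=WP$ combined with (iii) and continuity pins down $W'=W$.

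For property (b), the identity $\|Tx\|=\|T^*x\|$ for normal $T$ yields $\ker(T^*)=\ker(T)=\ker(P)$ and thus $\overline{\ran(T)}=\ker(T^*)^{\perp}=\overline{\ran(P)}$; hence $W|_{\overline{\ran(T)}}=W|_{\overline{\ran(P)}}$ is a surjective isometry onto $\overline{\ran(T)}$, i.e.\ unitary there. For (c), I would use the spectral theorem based on the $S$-spectrum (cited in the introduction) to show that, when $T$ is normal, $P=|T|$ commutes with $T$; combining this with $T=WP$ and the identity $T^2=\pm P^2$ (according as $T$ is selfadjoint or anti-selfadjoint) gives $W^2=\pm \id$ on $\overline{\ran(P)}$, which together with the unitarity of $W$ there yields $W^*=\pm W$.

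The principal obstacle is this last point: verifying that the familiar commutation $TP=PT$ for normal $T$, usually obtained via continuous functional calculus, is available in the quaternionic setting. The spectral-theorem and $S$-functional-calculus machinery from the cited literature supplies exactly this, so the argument goes through; the remaining steps are direct translations of the complex-Hilbert-space proof, requiring only that one consistently work with right $\H$-linear maps and the quaternionic Hermitian inner product.
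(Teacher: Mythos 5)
Your proposal is essentially correct, but note that the paper itself does not prove this theorem: it states ``The above theorem is mentioned several times in the literature, for a proof see \cite{GMP}'' and moves on. So there is no in-paper argument to compare against; what you have written is a self-contained proof where the authors give none, and it is the standard construction (define $P=|T|$, check $\|Px\|=\|Tx\|$, build the partial isometry $W$ on $\overline{\ran(P)}$ and extend by zero on $\ker(P)=\overline{\ran(P)}^{\perp}$), correctly adapted to right $\H$-linear operators and the quaternionic Hermitian product.

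Two small points deserve tightening. First, in the uniqueness step, condition (iv) plus quaternionic polarization gives only that $W'^*W'u-u\in\ker(P')$ for $u\in\ker(P')^\perp$, not literally $W'^*W'P'=P'$; this does not matter for your conclusion, since $T^*T=P'\bigl(W'^*W'P'\bigr)=P'^2$ because the extra term is killed by the outer $P'$, but the intermediate identity as written is slightly too strong. Second, for (c) you need $WP=PW$, which you obtain from $TP=PT$; invoking the full $S$-spectral theorem is heavier than necessary here --- it suffices to observe that $\sqrt{T^*T}$ is a norm limit of polynomials in $T^*T$ with \emph{real} coefficients (real coefficients being the relevant caveat in the quaternionic setting), and that normality gives $T(T^*T)=(T^*T)T$. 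You should also record the one-line observation that both $W$ and $W^*$ annihilate $\ker(P)$ and preserve $\overline{\ran(P)}$ when $T$ is normal, so that the identity $W^*=\pm W$ established on $\overline{\ran(P)}$ really does hold on all of $\hil$. With these touches the argument is complete.
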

The above theorem is mentioned several times in the literature, for a proof see \cite{GMP}.
In order to investigate linear operators on a quaternionic Hilbert space, it is often useful to define a complex structure on the space that allows to write every vector in terms of two components that belong to a certain complex Hilbert space. If this complex structure is chosen appropriately, then the considered operator is the natural extension of a complex linear operator on the component space.
\begin{definition}[See G. Emch \cite{12}]
Let $J\in\boundOP(\hil)$ be anti-selfadjoint and unitary and let $i\in\S$. We define the complex subspaces
\begin{equation*}
\hil_+^{Ji} := \{ x \in \hil: Jx = xi\}\quad\text{and}\quad \hil_-^{Ji} := \{ x \in \hil: Jx = -xi\}.
\end{equation*}
\end{definition}
\begin{theorem}
$\hil_+^{Ji}$ is a nontrivial complex Hilbert space over $\C_i$ with respect to the structure induced by $\hil$: its sum is the sum of $\hil$, its complex scalar multiplication is the right scalar multiplication of $\hil$ restricted to $\C_i$ and its complex scalar product is the restriction of the scalar product of $\hil$ to $\hil_+^{Ji}\times \hil_+^{Ji}$.
An analogous statement holds true for $\hil_-^{Ji}$.
\end{theorem}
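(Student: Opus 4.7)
The plan is to verify one by one the features that turn $\hil_+^{Ji}$ into a complex Hilbert space over $\C_i$: stability under the ambient addition and right multiplication by scalars in $\C_i$, nontriviality, the assertion that the restricted scalar product actually takes values in $\C_i$, and completeness. A key algebraic preliminary is that the hypotheses on $J$ force $J^2 = -\id$: anti-self-adjointness gives $J^* = -J$ and unitarity gives $J^* = J^{-1}$, so that $-J = J^{-1}$, whence $J^2 = -\id$.

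For the linear structure, if $x,y\in\hil_+^{Ji}$ and $\alpha = a + ib \in \C_i$, right-linearity of $J$ yields $J(x+y) = xi + yi = (x+y)i$ and $J(x\alpha) = (Jx)\alpha = xi\alpha = x\alpha i$, the last identity using that $i$ commutes with every element of $\C_i$. Positivity, conjugate symmetry, and right $\C_i$-linearity of $\langle\cdot,\cdot\rangle$ are then inherited from the ambient quaternionic scalar product, provided one knows that the inner product of two elements of $\hil_+^{Ji}$ lies in $\C_i$. For this central point I would compute $\langle Jx,y\rangle$ in two ways: using the defining relation $Jx = xi$ together with quaternionic hermiticity gives
\[ \langle Jx, y\rangle = \langle xi, y\rangle = \overline{\langle y, x\rangle\, i} = -i\,\langle x, y\rangle, \]
while using $J^* = -J$ and $Jy = yi$ yields
\[ \langle Jx,y\rangle = \langle x, J^* y\rangle = -\langle x, Jy\rangle = -\langle x, y\rangle\, i. \]
Comparing these identities gives $i\langle x,y\rangle = \langle x,y\rangle i$, and since $\C_i$ coincides with the centralizer of $i$ in $\H$, this forces $\langle x,y\rangle\in\C_i$.

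Nontriviality I would settle as follows: for any $x\in\hil$, set $x_\pm := \tfrac12(x \mp Jxi)$; a short computation using $J^2 = -\id$ shows $Jx_\pm = \pm x_\pm i$, so $x = x_+ + x_-$ is the natural splitting into $J$-eigenvectors. Starting from any nonzero $x\in\hil$, either $x_+\neq 0$ already lies in $\hil_+^{Ji}$, or $x = x_-\in\hil_-^{Ji}$; in the latter case, pick any $j\in\S$ with $ij = -ji$ (such a $j$ exists because the orthogonal complement of $i$ in the pure imaginary quaternions is two-dimensional) and note that $J(xj) = (Jx)j = -xij = xji = (xj)i$, so $xj$ is a nonzero element of $\hil_+^{Ji}$. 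Completeness is then immediate: the map $x\mapsto Jx - xi$ is additive and continuous (both $J$ and right multiplication by $i$ are bounded isometries), so its kernel $\hil_+^{Ji}$ is a closed subspace of the complete space $\hil$ and therefore complete in the induced norm. The analogous statement for $\hil_-^{Ji}$ follows by replacing $i$ with $-i$, since $\hil_-^{Ji} = \hil_+^{J(-i)}$.

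The step I expect to be the main obstacle is the verification that $\langle x,y\rangle\in\C_i$: it is the place where the anti-self-adjointness of $J$ interacts with quaternionic hermiticity, and where one really needs the algebraic fact that $\C_i$ is the centralizer of $i$ in $\H$. Everything else is essentially bookkeeping against the axioms of a quaternionic Hilbert space.
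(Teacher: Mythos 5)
Your proof is correct. Note that the paper does not actually prove this theorem: it states it and defers to the reference [GMP] (Ghiloni--Moretti--Perotti), so there is no in-paper argument to compare against. Your proposal supplies a complete, self-contained proof, and its ingredients are the standard ones: the identity $J^2=-\id$ from anti-self-adjointness plus unitarity; the double computation of $\langle Jx,y\rangle$ (once via $Jx=xi$ and quaternionic hermiticity, once via $J^*=-J$) forcing $\langle x,y\rangle$ to commute with $i$ and hence to lie in $\C_i$; the eigenprojections $x_\pm=\tfrac12(x\mp Jxi)$ giving the splitting $\hil=\hil_+^{Ji}\oplus\hil_-^{Ji}$, together with right multiplication by a $j$ anticommuting with $i$ to swap the two eigenspaces, which yields nontriviality of each summand (assuming, as one must, $\hil\neq\{0\}$); and closedness of $\ker(x\mapsto Jx-xi)$ for completeness. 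The reduction of the $\hil_-^{Ji}$ case to $\hil_+^{J(-i)}$ is a clean way to dispose of the last assertion. All steps check out; in particular the sign bookkeeping in $\overline{\langle y,x\rangle i}=-i\langle x,y\rangle$ and in $J(xj)=(xj)i$ is right.
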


\begin{theorem}\label{ComplexONB}
Every orthonormal basis $(e_n)_{n\in\N}$ of the complex Hilbert space $\hil_+^{Ji}$ is also an orthonormal basis of $\hil$. Moreover, it is
\begin{equation*}
Jx = \sum_{n\in\N} e_n i \langle e_n, x\rangle,\quad \forall x\in\hil.
\end{equation*}
\end{theorem}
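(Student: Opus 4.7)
The plan is to prove both assertions together: once we know $(e_n)_{n\in\N}$ is an orthonormal basis of $\hil$ as a quaternionic Hilbert space, the formula for $J$ drops out immediately from right-linearity and continuity, since $x = \sum_n e_n \langle e_n, x\rangle$ gives $Jx = \sum_n (Je_n)\langle e_n,x\rangle = \sum_n e_n i \langle e_n, x\rangle$. Thus the real content is the first claim.

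First I would check orthonormality: because the complex scalar product on $\hil_+^{Ji}$ is, by the previous theorem, the restriction of the quaternionic scalar product of $\hil$, the identity $\langle e_m,e_n\rangle_\hil = \delta_{mn}$ is automatic. Next I would establish the decomposition
\[\hil = \hil_+^{Ji} + \hil_-^{Ji}.\]
Since $J$ is anti-selfadjoint and unitary, $J^2 = -JJ^* = -\id$. For any $x \in \hil$, the vectors
\[ x_\pm := \tfrac{1}{2}(x \mp Jxi)\]
lie in $\hil_\pm^{Ji}$, as a direct computation using $J^2=-\id$ shows (e.g.\ $J x_+ = \tfrac12(Jx + xi) = x_+ i$), and $x_+ + x_- = x$.

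Next I would bring in an auxiliary imaginary unit $j\in\S$ with $j\perp i$ in order to relate $\hil_-^{Ji}$ to $\hil_+^{Ji}$. For $y \in \hil_-^{Ji}$ the vector $yj$ satisfies
\[J(yj) = (Jy)j = -yij = yji = (yj)i,\]
so $yj \in \hil_+^{Ji}$. Expanding $yj = \sum_n e_n \beta_n$ with $\beta_n \in \C_i$ and multiplying on the right by $j^{-1} = -j$ yields $y = -\sum_n e_n \beta_n j$. Combined with the $\C_i$-expansion $x_+ = \sum_n e_n \alpha_n$ of the plus-component, this gives
\[ x = \sum_{n\in\N} e_n a_n, \qquad a_n := \alpha_n - \beta_n j \in \H.\]
Hence the quaternionic right span of $(e_n)_{n\in\N}$ is dense in $\hil$, and since $(e_n)_{n\in\N}$ is orthonormal in $\hil$ it is therefore an orthonormal basis. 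Standard properties of orthonormal bases then give $a_n = \langle e_n, x\rangle$, and applying $J$ termwise as above proves the formula.

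The main obstacle is the step connecting $\hil_-^{Ji}$ to $\hil_+^{Ji}$, since right multiplication by $j$ is only conjugate $\C_i$-linear (as $\alpha j = j\bar\alpha$ for $\alpha\in\C_i$); care is needed to keep track of which coefficients live in $\C_i$ and which in $\H$, and to justify passing $J$ through the infinite sum (which is fine because $J\in\boundOP(\hil)$ is continuous and right-linear). Once the bookkeeping is organized via the $\H = \C_i \oplus \C_i j$ decomposition, everything else is mechanical.
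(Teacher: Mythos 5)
Your argument is correct. Note that the paper itself does not prove this theorem; it only states it and refers to \cite{GMP} for the proof, so there is no in-paper argument to compare against. Your route is the standard (and essentially unavoidable) one: orthonormality in $\hil$ is inherited because the scalar product on $\hil_+^{Ji}$ is the restriction of that of $\hil$; the projections $x_\pm=\tfrac12(x\mp Jxi)$ (using $J^2=-\id$) give $\hil=\hil_+^{Ji}+\hil_-^{Ji}$; and right multiplication by $j$ with $j\perp i$ carries $\hil_-^{Ji}$ into $\hil_+^{Ji}$, so that $x=\sum_n e_n(\alpha_n-\beta_n j)$ exhibits density of the quaternionic right span. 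The coefficient identification $a_n=\langle e_n,x\rangle$ and the formula for $J$ then follow from continuity and right-linearity of the scalar product and of $J$, exactly as you say; your cautionary remark about the conjugate $\C_i$-linearity of right multiplication by $j$ is well taken but harmless here, since you only use that this map is a continuous real-linear bijection between the two eigenspaces.
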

For a proof of the above two results see \cite{GMP}.
Observe that \Cref{ComplexONB} implies that we can write $x\in\hil$ as $x = x_1+ x_2j$ with $x_1,x_2\in\hil_+^{Ji}$. Moreover, it justifies considering $J$ as a left multiplication with the imaginary unit $i\in\S$. We may set $ix := Jx$ for any $x\in\hil$ and extend this left scalar multiplication to the entire complex field $\C_i$ by setting $(a_0 + i a_1)x = xa_0 + Jxa_1$. Since
\[ix = Jx = Jx_0 + Jx_1j = x_0i + x_1ij,\]
we then obtain \( ax = x_0a + x_1 aj\) for $a\in\C_i$.

Note that the choice of the imaginary unit $i$ is arbitrary, but that different imaginary units will lead to different left scalar multiplications, which are of course only defined for scalars in the respective complex plane!

We discuss now the relation between quaternionic linear operators on $\hil$ and complex linear operators on $\hil_+^{Ji}$. In order to avoid confusion, we distinguish complex linear operators by a $\C$-subscript.
\begin{theorem}\label{OPExtension}
Let $J$ be an anti-selfadjoint, unitary operator on $\hil$ and take $i \in \mathbb{S}$. For every bounded $\C_i$-linear operator $T_{\C}\in\boundOP( \hil^{Ji}_+)$, there exists a unique right $\field{H}$-linear operator $\widetilde{T_{\C}}\in\boundOP(\hil)$ that satisfies
\[ \widetilde{T_{\C}}x = T_{\C}x,\quad \forall x\in\hil_{+}^{Ji}.\]
The following facts also hold:
\begin{enumerate}[(a)]
\item We have $\|\widetilde{T_{\C}}\| = \|T_{\C}\|.$
\item It is $[\widetilde{T_{\C}},J]=0$, where $[\widetilde{T_{\C}},J] := \widetilde{T_{\C}}J - J\widetilde{T_{\C}}$ denotes the commutator of $\widetilde{T_{\C}}$ and $J$.
\item An operator $V\in\boundOP(\hil)$ is equal to $\widetilde{T_{\C}}$ for some $\C_i$-linear operator $T_{\C}\in\boundOP(\hil_+^{Ji})$ if and only if $[J,V] = 0$.
\item We have  $(\widetilde{T_{\C}})^* = \widetilde{T_{\C}^*}$.
\end{enumerate}
Furthermore, given another $\C_i$-linear operator $S_{\C}\in\boundOP( \hil_+^{Ji})$, we have
\begin{enumerate}[(a)]\addtocounter{enumi}{4}
\item $\widetilde{S_{\C}T_{\C}} = \widetilde{S_{\C}} \widetilde{T_{\C}}$.
\item If $S_{\C}$ is a right (resp. left) inverse of $T_{\C}$, then $\widetilde{S_{\C}}$ is a right (resp. left) inverse of $\widetilde{T_\C}$.
\end{enumerate}
\end{theorem}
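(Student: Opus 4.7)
The plan is to reduce everything to the direct sum decomposition $\hil = \hil_+^{Ji} \oplus \hil_+^{Ji} j$ obtained by fixing some $j \in \S$ with $j \perp i$: by \Cref{ComplexONB}, if $(e_n)_{n\in\N}$ is an orthonormal basis of $\hil_+^{Ji}$, then any $x \in \hil$ can be written uniquely as $x = x_1 + x_2 j$ with $x_1, x_2 \in \hil_+^{Ji}$, obtained by grouping the quaternionic coefficients in the expansion of $x$ along $(e_n)$ into their $\C_i$- and $\C_i j$-parts via $\H = \C_i \oplus \C_i j$. With this in hand I define
\[ \widetilde{T_\C}(x_1 + x_2 j) := T_\C x_1 + (T_\C x_2)\, j. \]
Uniqueness is forced because right $\H$-linearity together with the hypothesis $\widetilde{T_\C}|_{\hil_+^{Ji}} = T_\C$ immediately dictate this formula.

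The first nontrivial step is to verify right $\H$-linearity. Additivity is clear; for compatibility with right multiplication I write $a = a_1 + a_2 j$ with $a_1, a_2 \in \C_i$ and use the identities $ja = \overline{a}\, j$ (for $a \in \C_i$) and $j^2 = -1$ to compute $(x_1 + x_2 j)(a_1 + a_2 j) = (x_1 a_1 - x_2 \overline{a_2}) + (x_1 a_2 + x_2 \overline{a_1}) j$. Applying $\widetilde{T_\C}$ and using the $\C_i$-linearity of $T_\C$ gives the same expression with $T_\C$ prefixed to each $\hil_+^{Ji}$-component, which is exactly $(\widetilde{T_\C} x) a$. For the norm equality in (a), the identity $ja = \overline{a} j$ together with $\langle x_1, x_2\rangle \in \C_i$ shows that the cross terms in $\|x_1 + x_2 j\|^2$ cancel, yielding $\|x\|^2 = \|x_1\|^2 + \|x_2\|^2$; the same identity applied to $\widetilde{T_\C}x$ gives $\|\widetilde{T_\C} x\| \leq \|T_\C\| \|x\|$, with equality attained already on $\hil_+^{Ji}$.

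The remaining properties follow quickly. For (b), I compute $J(x_1 + x_2 j) = x_1 i + (x_2 i) j$ directly from the defining conditions of $\hil_+^{Ji}$, so that the $\C_i$-linearity of $T_\C$ yields $\widetilde{T_\C} J(x_1 + x_2 j) = (T_\C x_1)i + ((T_\C x_2) i) j = J \widetilde{T_\C}(x_1 + x_2 j)$. For (c), if $V$ commutes with $J$ then $JVx = V(Jx) = V(xi) = (Vx)i$ for $x \in \hil_+^{Ji}$, so $V$ stabilizes $\hil_+^{Ji}$, and $V$ coincides with the extension of $V|_{\hil_+^{Ji}}$ by the uniqueness already established. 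For (d), taking adjoints of $\widetilde{T_\C} J = J \widetilde{T_\C}$ and using $J^* = -J$ gives $[(\widetilde{T_\C})^*, J] = 0$, so by (c) this adjoint equals $\widetilde{S_\C}$ for some $S_\C \in \boundOP(\hil_+^{Ji})$, and restricting the defining identity $\langle (\widetilde{T_\C})^* x, y\rangle = \langle x, \widetilde{T_\C} y\rangle$ to $x,y \in \hil_+^{Ji}$ identifies $S_\C$ with $T_\C^*$. Finally (e) follows from uniqueness (both sides are right $\H$-linear extensions of $S_\C T_\C$), and (f) is a direct consequence of (e) applied to $S_\C T_\C = \id$ or $T_\C S_\C = \id$.

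The main obstacle is the verification of right $\H$-linearity: the noncommutativity of $\H$ forces $a \in \C_i$ and $j$ to anticommute, which scrambles the two components under $(x_1 + x_2 j) \cdot a$, and one must check that the $\C_i$-linearity of $T_\C$ is exactly what is needed to recover the identity $\widetilde{T_\C}(xa) = (\widetilde{T_\C} x) a$. Everything else is essentially formal once the construction and this linearity check are in place.
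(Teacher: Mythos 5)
Your proof is correct. Note that the paper does not actually prove this theorem --- it only remarks that it reformulates results of G.~Emch \cite{12} --- so there is no in-paper argument to compare against; your construction via the decomposition $\hil = \hil_+^{Ji}\oplus\hil_+^{Ji}j$ is exactly the one the paper itself sets up in the discussion following \Cref{ComplexONB}, and all the verifications (the anticommutation $ja=\overline{a}j$ in the right-linearity check, the vanishing of the cross terms using $\langle x_1,x_2\rangle\in\C_i$ for the norm identity, and the reduction of (c)--(f) to uniqueness) are sound.
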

The above theorem generalizes some results of G. Emch in Section 3 of \cite{12} and formulates them in a modern language.
\begin{definition}\label{B=JJJ}
For an anti-selfadjoint unitary operator $J$, we define the set
\[\B_J(\hil) := \{T\in\B(\hil): [T,J] = 0\}.\]
\end{definition}
\begin{remark}\label{BJIso}
For $T\in\boundOP_J(\hil)$ and $a = a_0 + i a_1\in\C_i$ set $aT = a_0 T + a_1 JT$. If we consider $J$ as a left-multiplication with some imaginary unit $i \in \mathbb{S}$, then  $\B_J(\hil)$ turns into a Banach algebra over $\C_i$ that is isometrically isomorphic to $\B(\hil^{Ji}_+)$. Isometric isomorphims between these two spaces are given by
\begin{align}\label{ResLift}
\res{Ji}:\begin{cases} \B_J(\hil)&\rightarrow \B(\hil_+^{Ji})\\ T &\mapsto T \big|_{\hil_+^{Ji}}\end{cases}
\qquad
\lift{Ji}: \begin{cases} \B(\hil_+^{Ji}) &\rightarrow \B_J(\hil)\\ T_{\C} &\mapsto \widetilde{T_{\C}}\end{cases},
\end{align}
where $\widetilde{T_{\C}}$ is the unique operator obtained from \Cref{OPExtension}.
\end{remark}

\begin{corollary}\label{CorResLift}
An operator $T\in\boundOP_J(\hil)$ is selfadjoint, anti-selfadjoint, positive, normal or unitary if and only if $\res{Ji}(T)$ is.
\end{corollary}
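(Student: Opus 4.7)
The plan is to exploit the fact that, as recorded in Remark \ref{BJIso}, $\res{Ji}\colon\boundOP_J(\hil)\to\boundOP(\hil_+^{Ji})$ is an isometric $\C_i$-algebra isomorphism with inverse $\lift{Ji}$. Combining this with part (d) of Theorem \ref{OPExtension}, one obtains $\res{Ji}(T^*)=(\res{Ji}(T))^*$ for every $T\in\boundOP_J(\hil)$; so $\res{Ji}$ is in addition $*$-preserving. Each of the five conditions appearing in the definitions of selfadjoint, anti-selfadjoint, positive, normal and unitary operators can then be transported across this isomorphism.

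For the selfadjoint, anti-selfadjoint and normal cases, the defining conditions are polynomial identities in $T$ and $T^*$, namely $T-T^*=0$, $T+T^*=0$ and $[T,T^*]=0$. Since $\res{Ji}$ is $\C_i$-linear, multiplicative, $*$-preserving and bijective, such an identity holds in $\boundOP_J(\hil)$ if and only if, after applying $\res{Ji}$, the corresponding identity holds in $\boundOP(\hil_+^{Ji})$. For unitarity I additionally invoke part (f) of Theorem \ref{OPExtension}: $T$ is invertible in $\boundOP_J(\hil)$ iff $\res{Ji}(T)$ is invertible in $\boundOP(\hil_+^{Ji})$ and the inverses correspond, so $T^{-1}=T^*$ holds if and only if $\res{Ji}(T)^{-1}=\res{Ji}(T)^*$ holds.

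The delicate case is positivity, because it is a condition on the quadratic form rather than an algebraic identity. The forward direction is immediate: if $\langle x,Tx\rangle\geq 0$ for every $x\in\hil$, then in particular this holds for $x\in\hil_+^{Ji}$, where the quaternionic scalar product coincides with the complex scalar product $\langle x,\res{Ji}(T)x\rangle$ of $\hil_+^{Ji}$, so $\res{Ji}(T)\geq 0$. For the converse I would employ the classical positive square root on the complex Hilbert space $\hil_+^{Ji}$: if $\res{Ji}(T)\geq 0$ there exists a positive selfadjoint $S_\C\in\boundOP(\hil_+^{Ji})$ with $S_\C^2=\res{Ji}(T)$. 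By parts (d) and (e) of Theorem \ref{OPExtension}, $\lift{Ji}(S_\C)$ is selfadjoint and $T=\lift{Ji}(S_\C)^2$, whence
\[ \langle x,Tx\rangle=\langle \lift{Ji}(S_\C)x,\lift{Ji}(S_\C)x\rangle=\|\lift{Ji}(S_\C)x\|_{\hil}^{2}\geq 0 \]
for every $x\in\hil$.

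The main obstacle is this last direction for positivity, but its only genuinely non-formal ingredient is the existence of a positive square root of a positive operator on the complex Hilbert space $\hil_+^{Ji}$, which is classical. A more hands-on alternative, avoiding the square root, would be to decompose $x\in\hil$ as $x=x_1+x_2 j$ with $x_1,x_2\in\hil_+^{Ji}$ and $j\in\S$ anticommuting with $i$, expand $\langle x,Tx\rangle$, and then use the relation $ja=\overline{a}j$ for $a\in\C_i$ together with the already-established selfadjointness of $T$ to cancel the cross terms, leaving two manifestly non-negative diagonal contributions.
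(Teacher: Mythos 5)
Your proof is correct and follows the route the paper intends: the corollary is stated there without proof, as an immediate consequence of Theorem \ref{OPExtension} and Remark \ref{BJIso}, and your argument simply makes that explicit. The one case that genuinely needs an extra ingredient---the converse for positivity---is handled correctly via the lifted positive square root (and your alternative splitting $x=x_1+x_2j$ also works).
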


\section{Schatten classes of quaternionic linear operators}
In order to introduce Schatten classes of quaternionic linear operators, we need to define the singular values of compact quaternionic linear operators. For analogue results in the complex case see \cite{kehe}.

\subsection{Singular values of compact operators}
\begin{definition}
A right linear operator is called compact if it maps bounded sequences to sequences that admit convergent subsequences. We denote the set of all compact quaternionic right linear operators on $\hil$ by $\B_0(\hil)$.
\end{definition}

We recall the spectral theorem for compact quaternionic operators (see \cite{spectcomp}); note that this is a special case of the spectral theorem for arbitrary normal operators, which was recently established in \cite{ack}.
\begin{theorem}\label{CompSpec}
Given a normal operator $T \in \B_0(\hil)$ with spherical point spectrum $\sigma_{pS}(T)$ there exists a Hilbert basis $\mathcal{N}\subset \hil$ made of eigenvectors of $T$ such that
\begin{align}\label{SpecDec}
Tx &= \sum _{z \in \mathcal{N}}  z \lambda_z \inpr{z}{x}~~~~~~\forall x \in \hil,
\end{align}
where $\lambda_z \in \H$ is an eigenvalue relative to the eigenvector $z$ and if $\lambda_z \neq 0$ then there are only a finite number of distinct $z' \in \mathcal{N}$ such that $\lambda_z = \lambda_{z'}$. Moreover the values $\lambda_z$ are at most countably many and $0$ is their only possible accumulation point.
\end{theorem}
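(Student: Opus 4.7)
The plan is to reduce the theorem to the classical complex spectral theorem for compact normal operators by means of the complexification machinery in Theorem \ref{OPExtension} and Remark \ref{BJIso}. The strategy is: construct an anti-selfadjoint unitary $J \in \B(\hil)$ commuting with $T$, fix $i \in \S$ and pass to the complex restriction $T_\C := \res{Ji}(T) \in \B(\hil_+^{Ji})$, apply the classical theorem there, and then lift the resulting eigenbasis back to $\hil$ via Theorem \ref{ComplexONB}.

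The main work lies in constructing $J$. I would decompose $T = A + B$ with $A := (T+T^*)/2$ selfadjoint and $B := (T-T^*)/2$ anti-selfadjoint; both are compact and $[A,B]=0$ by normality. Applying Theorem \ref{polar} to $B$, the partial isometry $J_0$ in the polar decomposition $B = J_0|B|$ is anti-selfadjoint (property (c)), unitary on $\ker(B)^\perp = \overline{\ran(|B|)}$, and zero on $\ker(B)$. Since $B^* = -B$, $[A,B]=0$ forces $[A,|B|^2]=0$, hence $[A,|B|]=0$ and, by writing $J_0 = B|B|^{-1}$ on $\ran(|B|)$ and extending by continuity, $[A,J_0]=0$ on $\ker(B)^\perp$; moreover $A$ leaves $\ker(B)$ invariant. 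To extend $J_0$ to a unitary $J$ on all of $\hil$, I would invoke the spectral theorem for compact selfadjoint quaternionic operators applied to $A|_{\ker(B)}$ to find an orthonormal basis $(u_n)$ of $\ker(B)$ consisting of eigenvectors of $A|_{\ker(B)}$ with real eigenvalues, fix some $i_0 \in \S$, and set $J_1 u_n := u_n i_0$. A direct computation shows $J_1$ is anti-selfadjoint, unitary, and commutes with $A|_{\ker(B)}$ since each $u_n$ has a real eigenvalue. Setting $J := J_0 \oplus J_1$ then produces the desired anti-selfadjoint unitary with $[J,T] = 0$.

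With $J$ in hand, Corollary \ref{CorResLift} gives that $T_\C$ is normal on $\hil_+^{Ji}$, and it is compact because $\hil_+^{Ji}$ is an isometrically embedded closed subspace of $\hil$. The classical complex spectral theorem for compact normal operators on the separable complex Hilbert space $\hil_+^{Ji}$ produces an orthonormal basis $(e_n)_{n\in\N}$ of $\hil_+^{Ji}$ of eigenvectors of $T_\C$ with eigenvalues $\mu_n \in \C_i$, $\mu_n \to 0$, each nonzero value having finite multiplicity. By Theorem \ref{ComplexONB}, $(e_n)$ is also a Hilbert basis $\mathcal{N}$ of the quaternionic Hilbert space $\hil$. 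Since $T$ extends $T_\C$, each $e_n \in \mathcal{N}$ satisfies $T e_n = e_n \mu_n$, making it an eigenvector of $T$ with eigenvalue $\lambda_{e_n} := \mu_n \in \H$. Expanding any $x \in \hil$ as $x = \sum_n e_n \langle e_n, x\rangle$ and applying $T$ yields formula \eqref{SpecDec}, while the accumulation and finite-multiplicity properties descend directly from the complex statement.

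The main technical obstacle is the construction of $J$ with $[J,T]=0$: the polar decomposition of $B$ only yields an anti-selfadjoint partial isometry, and extending it to a global anti-selfadjoint unitary over $\ker(B)$ without destroying commutation with $A$ requires invoking the compact selfadjoint quaternionic spectral theorem as a prerequisite. Once $J$ is available, the remainder is a routine transfer via the isomorphism $\res{Ji}$ of Remark \ref{BJIso}.
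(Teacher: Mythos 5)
The paper does not prove Theorem \ref{CompSpec} at all: it is explicitly recalled from the literature (the reference \cite{spectcomp} of Ghiloni--Moretti--Perotti, as a special case of \cite{ack}), so there is no in-paper proof to compare against. Your argument is essentially the standard one from that reference --- build an anti-selfadjoint unitary $J$ commuting with $T$, restrict to the complex subspace $\hil_+^{Ji}$, apply the classical compact normal spectral theorem, and lift the eigenbasis back via \Cref{ComplexONB} --- and the outline is sound. Two points deserve more care. First, you verify $[J,A]=0$ but never check $[J,B]=0$, which is also needed for $[J,T]=0$; on $\ker(B)^\perp$ this reduces to $[J_0,|B|]=0$, which does hold (from $B|B|^2=|B|^2B$ one gets $[B,|B|]=0$, hence $(J_0|B|-|B|J_0)|B|=0$ and the commutator vanishes on $\overline{\ran(|B|)}=\ker(B)^\perp$), but it should be stated. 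Second, your construction of $J_1$ on $\ker(B)$ invokes the spectral theorem for compact \emph{selfadjoint} quaternionic operators, which is a special case of the very statement being proved; this is not circular provided you note that the selfadjoint case admits an independent, elementary proof (real eigenvalues, Rayleigh-quotient extraction of eigenvectors, no complexification needed), and you do flag this dependence, but the logical order should be made explicit. With those two repairs the proof is complete and matches the approach of the cited source.
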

Recall that we are considering only separable Hilbert spaces. Hence, $\mathcal{N}$ is always countable and we can write $\mathcal{N} = \{e_n: n\in\N\}$.
\begin{remark} \label{CompSpecPlane}
Observe that the eigenvalues $(\lambda_n)_{n\in\N}$ in the spectral decomposition of a compact quaternionic linear operator are in general not unique. Indeed, if $Te_n = e_n\lambda_n$ and $q\in\H$ with $|q| = 1$, then
\[T(e_nq) = (Te_n)q = e_n\lambda_n q = (e_nq)q^{-1}\lambda_n q\]
and hence the eigenpair $(e_n,\lambda_n)$ can be replaced by $(e_nq, q^{-1}\lambda_n q)$.
 However, $q^{-1}\lambda_nq \in [\lambda_n]$ and consequently at least the eigenspheres $[\lambda_n]$
 are uniquely determined. In particular, the eigenvalues can be chosen such that
 $\lambda_n\in\C_i^+:=\{s = s_0 + is_1\in \C_i: s_1\geq0\}$ for a given imaginary unit $i\in\S$, cf. Lemma \ref{Turn}.
\end{remark}
\begin{remark}\label{}
Using the spectral theorem, one can define a functional calculus for so-called slice functions. We just need the special case of fractional powers of a positive operator compact operator $T$: consider its spectral decomposition $T = \sum_{n\in\N}e_n\lambda_n\langle e_n,\cdot\rangle$. For $p>0$,  the operator $T^p$ is defined as
\[T^p = \sum_{n\in\N} e_n\lambda_n \langle e_n, \cdot\rangle. \]
\end{remark}

Consider now an arbitrary compact operator $T$. Then the operator $|T|$ is normal and, combining \Cref{CompSpec} applied to $|T|$ with the polar
decomposition, \Cref{polar}, we are capable of finding a Hilbert-basis $(e_n)_{n\in\N}$ and an orthonormal set $(\sigma_n)_{n\in\N}$ in $\hil$ such that
\begin{align}\label{SVD}
Tx = \sum _{n\in\N} \sigma_n\lambda_n \inpr{e_n}{x}\qquad \forall x \in \hil,
\end{align}
where the $\lambda_n\in\R^+$ are the eigenvalues of the operator $|T|$ in decreasing order, the vectors $(e_n)_{n\in\N}$ form an eigenbasis of $|T|$ and $\sigma_n = We_n$ with $W$ unitary such that $T = W|T|$.

\begin{definition}We call the set $\{\lambda_n  \}_{n \in \N}$
the set of singular values of $T$ and the representation \eqref{SVD} the singular value decomposition of $T$.
\end{definition}

The following Rayleigh's equation gives a characterization of the singular values of $T$. Since the proof follows the lines of the complex case, we omit it and refer to \cite{ds2}.

\begin{lemma}
Let $T$ be a positive, compact operator on $\hil$ and let
\begin{align}\label{ASESPECD}
Tx &= \sum _{n \in \N}  e_n \lambda_n \inpr{e_n}{x}
\end{align}
be the spectral decomposition of $T$, where the eigenvalues $\{\lambda_n\}_{n\in\N}$ are given in descending order. Then
\[
\lambda_{n+1} = \min_{y_1,\ldots, y_n} \max_{\begin{subarray}{c}\inpr{x}{y_i}=0\\ i=1,\ldots,n\end{subarray}}\frac{\inpr{x}{Tx}}{\|x\|^2}.
\]
\end{lemma}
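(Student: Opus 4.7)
The plan is to adapt the classical Courant--Fischer argument to the quaternionic setting, carefully tracking the conjugate-linearity of the inner product in its first slot.

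First, I would rewrite the Rayleigh quotient in coordinates given by the orthonormal basis $(e_n)_{n\in\N}$. For any $x\in\hil$, write $x = \sum_{k} e_k c_k$ where $c_k = \langle e_k, x\rangle \in \H$, so $\|x\|^2 = \sum_k |c_k|^2$. Using the spectral decomposition \eqref{ASESPECD} and the fact that the $\lambda_k$ are real and nonnegative, a direct computation gives
\[
\langle x, Tx\rangle = \sum_{k\in\N} \overline{c_k}\,\lambda_k\, c_k = \sum_{k\in\N} \lambda_k |c_k|^2.
\]
This identity reduces the whole problem to a weighted-$\ell^2$ estimate, exactly as in the complex case.

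For the upper bound, I would choose the specific tuple $y_i = e_i$ for $i=1,\ldots,n$. If $\langle x, e_i\rangle = 0$ then by quaternionic Hermiticity $c_i = \langle e_i,x\rangle = \overline{\langle x,e_i\rangle} = 0$, so $x = \sum_{k>n} e_k c_k$, and the identity above yields
\[
\langle x, Tx\rangle = \sum_{k>n} \lambda_k |c_k|^2 \;\leq\; \lambda_{n+1} \sum_{k>n}|c_k|^2 = \lambda_{n+1}\|x\|^2,
\]
since the sequence $(\lambda_k)$ is descending. This shows that the min-max is at most $\lambda_{n+1}$.

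For the lower bound, given arbitrary $y_1,\ldots,y_n\in\hil$, I need to produce a nonzero $x\in\linspan_\H(e_1,\ldots,e_{n+1})$ with $\langle x,y_i\rangle = 0$ for each $i$. Here is the one point where quaternionic non-commutativity bites: the map $x\mapsto \langle x, y_i\rangle$ is \emph{not} right $\H$-linear, because $\langle xa, y_i\rangle = \overline{a}\langle x, y_i\rangle$. The workaround is to pass to the complex-conjugate slot and consider
\[
\Psi: \linspan_\H(e_1,\ldots,e_{n+1})\to \H^n,\qquad \Psi(x) := \bigl(\langle y_1,x\rangle,\ldots,\langle y_n,x\rangle\bigr),
\]
which \emph{is} right $\H$-linear by property (ii) of the scalar product. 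Since $\dim_\H$ of the domain is $n+1$ and of the codomain is $n$, standard quaternionic linear algebra gives a nonzero $x\in\ker\Psi$; taking conjugates yields $\langle x,y_i\rangle = \overline{\langle y_i,x\rangle} = 0$. Writing this $x = \sum_{k=1}^{n+1} e_k c_k$ and applying the coordinate identity,
\[
\langle x, Tx\rangle = \sum_{k=1}^{n+1}\lambda_k |c_k|^2 \;\geq\; \lambda_{n+1}\sum_{k=1}^{n+1}|c_k|^2 = \lambda_{n+1}\|x\|^2,
\]
so the inner max is at least $\lambda_{n+1}$ for every choice of $y_1,\ldots,y_n$, hence the min-max is at least $\lambda_{n+1}$. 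Combined with the upper bound this gives equality.

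The main subtlety, and the only genuine departure from the complex proof, is the linearity bookkeeping in the dimension-count argument of the lower bound: one must work with $x\mapsto \langle y_i,x\rangle$ rather than $x\mapsto \langle x,y_i\rangle$ so that the kernel argument on finite-dimensional right quaternionic subspaces makes sense. Everything else is a transcription of the complex proof, with the key algebraic fact being that $\overline{c}\lambda c = \lambda|c|^2$ whenever $\lambda\in\R$.
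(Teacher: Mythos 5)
Your proof is correct, and it is precisely the argument the paper has in mind: the paper omits the proof of this lemma entirely, stating only that it ``follows the lines of the complex case'' with a reference to Dunford--Schwartz, and your write-up is exactly that classical Courant--Fischer min-max argument transcribed to the quaternionic setting. The one point you rightly flag as needing care --- using the right-linear functionals $x\mapsto\langle y_i,x\rangle$ rather than $x\mapsto\langle x,y_i\rangle$ so that the dimension count over the division ring $\H$ applies --- is handled correctly.
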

Also in the quaternionic setting  we have:
\begin{lemma}
Let $T$ be a positive, compact operator on $\hil$. Then
\begin{equation}
\label{MinMaxNorm}
\lambda_{n+1} = \min_{y_1,\ldots, y_n} \max_{\begin{subarray}{c}\inpr{x}{y_i}=0\\ i=1,\ldots,n\\\|x\|=1\end{subarray}} \|Tx\|.
\end{equation}
\end{lemma}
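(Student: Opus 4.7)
The plan is to reduce this statement to the preceding Rayleigh-type lemma applied to the operator $T^{2}$. Since $T$ is positive, we have $\|Tx\|^{2}=\langle Tx,Tx\rangle=\langle x,T^{2}x\rangle$, so the quotient $\|Tx\|^{2}/\|x\|^{2}$ is nothing else than the Rayleigh quotient of $T^{2}$. This suggests the whole statement is really the previous lemma applied to $T^{2}$, with a square root taken at the end.

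Step one is to verify that $T^{2}$ falls into the hypotheses of the preceding lemma. Since $T\in\boundOP_{0}(\hil)$ and $\boundOP_{0}(\hil)$ is a two-sided ideal in $\boundOP(\hil)$, the operator $T^{2}$ is compact; positivity follows from $\langle x,T^{2}x\rangle=\|Tx\|^{2}\geq 0$. Step two is to identify the eigenvalues of $T^{2}$: starting from the spectral decomposition \eqref{ASESPECD} of $T$, a direct computation gives $T^{2}x=\sum_{n\in\N}e_{n}\lambda_{n}^{2}\langle e_{n},x\rangle$, so the eigenvalues of $T^{2}$ listed in decreasing order are exactly $(\lambda_{n}^{2})_{n\in\N}$ (they remain decreasing because $t\mapsto t^{2}$ is monotone on $[0,\infty)$ and all $\lambda_{n}$ are non-negative).

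Step three is to apply the previous lemma to $T^{2}$, which yields
\[
\lambda_{n+1}^{2}=\min_{y_{1},\ldots,y_{n}}\max_{\substack{\inpr{x}{y_{i}}=0\\ i=1,\ldots,n}}\frac{\inpr{x}{T^{2}x}}{\|x\|^{2}}=\min_{y_{1},\ldots,y_{n}}\max_{\substack{\inpr{x}{y_{i}}=0\\ i=1,\ldots,n}}\frac{\|Tx\|^{2}}{\|x\|^{2}}.
\]
Since the quotient $\|Tx\|^{2}/\|x\|^{2}$ is $0$-homogeneous in $x$, we may restrict the inner maximization to unit vectors $\|x\|=1$ without changing its value. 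Finally, taking square roots (both sides being non-negative and $t\mapsto\sqrt{t}$ being continuous and monotone, so commuting with the $\min$ and $\max$) gives \eqref{MinMaxNorm}.

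There is essentially no obstacle here: the only subtle point is making sure that the eigenvalues of $T^{2}$ are really $\lambda_{n}^{2}$ in the same order, which uses the positivity of $T$ (otherwise, squaring could permute the ordering), and that we are allowed to use the preceding lemma on $T^{2}$, which only requires compactness and positivity of $T^{2}$ — both already verified.
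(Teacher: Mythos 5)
Your proof is correct, but it takes a different route from the paper. The paper stays with the operator $T$ itself: it starts from the Rayleigh formula $\lambda_{n+1}=\min\max\,\inpr{x}{Tx}$ (over unit vectors), uses Cauchy--Schwarz to get $\inpr{x}{Tx}\leq\|Tx\|$ and hence $\lambda_{n+1}\leq\min\max\|Tx\|$, and then closes the sandwich by choosing the test vectors $y_i=e_i$ to be the eigenvectors of $T$ and computing $\|Tx\|=\bigl(\sum_{m\geq n+1}\lambda_m^2|\inpr{x}{e_m}|^2\bigr)^{1/2}\leq\lambda_{n+1}$ for unit $x\perp e_1,\ldots,e_n$ directly from the spectral decomposition \eqref{ASESPECD}. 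You instead apply the Rayleigh lemma to $T^{2}$, using $\|Tx\|^{2}=\inpr{x}{T^{2}x}$, identify the eigenvalues of $T^{2}$ as $(\lambda_n^{2})$ in the same order, and take square roots; the monotonicity of $t\mapsto\sqrt{t}$ lets the root pass through the $\min$ and $\max$. Your argument is shorter and more modular, reusing the previous lemma as a black box; the paper's argument avoids having to set up the spectral data of $T^{2}$ and has the small side benefit of exhibiting explicitly that the minimum in \eqref{MinMaxNorm} is attained at $y_i=e_i$. One point worth making explicit in your version: the identity $\inpr{x}{T^{2}x}=\|Tx\|^{2}$ uses the self-adjointness of $T$, which in this setting is a standard consequence of positivity (and is implicit in the paper's use of the spectral decomposition for positive operators); alternatively, positivity of $T^{2}$ can be read off from its spectral decomposition, whose eigenvalues $\lambda_n^2$ are non-negative. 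Neither is a gap, just a hypothesis to acknowledge.
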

\begin{proof}
In fact
\begin{align*}
\lambda_{n+1} = \min_{y_1,\ldots, y_n} \max_{\begin{subarray}{c}\inpr{x}{y_i}=0\\ i=1,\ldots,n\\\|x\|=1\end{subarray}}\inpr{x}{Tx}.
\end{align*}
  using the Cauchy-Schwarz inequality we see that
\begin{align*}
\inpr{x}{Tx} \leq \|x\| \|Tx\|
\end{align*}
and since we assumed in the above lemma that $T$ is positive, we can consider its spectral decomposition \eqref{ASESPECD} in order to see that
\[
\begin{split}
\lambda_{n+1} &= \min_{y_1,\ldots, y_n}
\max_{\begin{subarray}{c}\inpr{x}{y_i}=0\\ i=1,\ldots,n\\\|x\|=1
\end{subarray}}
\inpr{x}{Tx}
\leq \min_{y_1,\ldots, y_n} \max_{\begin{subarray}{c}\inpr{x}{y_i}=0\\ i=1,\ldots,n\\\|x\|=1\end{subarray}} \|Tx\|
\leq \max_{\begin{subarray}{c}\inpr{x}{e_i}=0\\ i=1,\ldots,n\\\|x\|=1\end{subarray}} \|Tx\|
\\
&
= \max_{\begin{subarray}{c}\inpr{x}{e_i}=0\\ i=1,\ldots,n\\\|x\|=1
\end{subarray}} \sqrt{\sum_{m \geq n+1}\lambda_m^2 |\inpr{x}{e_m}|^2}
\leq \lambda_{n+1}\max_{\begin{subarray}{c}\inpr{x}{e_i}=0\\ i=1,\ldots,n\\\|x\|=1\end{subarray}}
\sqrt{\sum_{m \geq n+1}|\inpr{x}{e_m}|^2}
\\
&\leq \lambda_{n+1}.
\end{split}
\]
\end{proof}
The singular values of an arbitrary compact operator $T$ are the
eigenvalues of the positive operator $|T|$. Since the eigenvalues of $|T|$
can be obtained via the formula \eqref{MinMaxNorm} and since
$\|Tx\| = \|W |T| x\| = \| |T| x\|$ by \Cref{polar}, the singular values of $T$ also satisfy~\eqref{MinMaxNorm}.

The formula \eqref{MinMaxNorm} also allows us to deduce the following corollary, just as in the complex case, cf. \cite[Theorem~1.34]{kehe}.
\begin{corollary}\label{singvalnorm}
Let $T$ be a  compact operator on $\hil$. Its singular values satisfy
\begin{align}\label{OPCharac}
\lambda_{n+1} = \inf_{F \in \mathcal{F}_n}\|T-F\|,
\end{align}
where $\mathcal{F}_n$ is the set of all linear operators on $\hil$ with rank less than or equal to~$n$.
\end{corollary}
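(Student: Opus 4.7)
\medskip

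The plan is to prove two inequalities: $\inf_{F \in \mathcal{F}_n} \|T-F\| \leq \lambda_{n+1}$ via an explicit truncation, and $\inf_{F \in \mathcal{F}_n} \|T-F\| \geq \lambda_{n+1}$ via a dimension-count/min-max argument. Since the proof uses only the singular value decomposition \eqref{SVD}, orthonormality, and right-linearity, it should follow the complex template closely; the hint in the statement referring to \cite[Theorem~1.34]{kehe} is consistent with this.

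\textbf{Upper bound (construction of an approximant).} Starting from the singular value decomposition
\[Tx = \sum_{k\in\N}\sigma_k\lambda_k\langle e_k,x\rangle,\]
I would set
\[F_n x := \sum_{k=1}^{n}\sigma_k\lambda_k\langle e_k,x\rangle.\]
Clearly $F_n$ has rank at most $n$, so $F_n\in\mathcal{F}_n$. Since $(\sigma_k)_{k\in\N}$ is orthonormal and the $\lambda_k$ are arranged in decreasing order, for any unit vector $x\in\hil$ one has
\[\|(T-F_n)x\|^2 = \Big\|\sum_{k>n}\sigma_k\lambda_k\langle e_k,x\rangle\Big\|^2 = \sum_{k>n}\lambda_k^2 |\langle e_k,x\rangle|^2\le \lambda_{n+1}^2\sum_{k>n}|\langle e_k,x\rangle|^2\le \lambda_{n+1}^2,\]
so $\|T-F_n\|\le \lambda_{n+1}$, giving $\inf_{F\in\mathcal{F}_n}\|T-F\|\le\lambda_{n+1}$.

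\textbf{Lower bound (dimension argument).} Fix $F\in\mathcal{F}_n$ arbitrary. Consider the $(n+1)$-dimensional right $\H$-subspace $M := \operatorname{span}_{\H}\{e_1,\ldots,e_{n+1}\}$. Since $\dim_{\H}\ran(F)\le n<n+1 = \dim_{\H} M$, the restriction $F|_{M}:M\to \ran(F)$ cannot be injective, so there exists a unit vector $x\in M$ with $Fx=0$. Writing $x = \sum_{k=1}^{n+1} e_k a_k$ with $\sum_{k=1}^{n+1}|a_k|^2 = 1$, we obtain
\[\|(T-F)x\|^2 = \|Tx\|^2 = \sum_{k=1}^{n+1}\lambda_k^2|a_k|^2 \ge \lambda_{n+1}^2\sum_{k=1}^{n+1}|a_k|^2 = \lambda_{n+1}^2,\]
where we used that $(\sigma_k)$ is orthonormal and that $\lambda_1\ge\cdots\ge\lambda_{n+1}$. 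Hence $\|T-F\|\ge \lambda_{n+1}$ for every $F\in\mathcal{F}_n$, and the infimum satisfies $\inf_{F\in\mathcal{F}_n}\|T-F\|\ge\lambda_{n+1}$. Combining both bounds yields the corollary, with the infimum actually attained by $F_n$.

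\textbf{Main obstacle.} The one non-routine point is the rank-comparison step in the lower bound. In the complex setting it is automatic that $\dim\ran(F)\le n$ forces the existence of a unit vector $x\in M$ with $Fx=0$ whenever $\dim M>n$; over $\H$ one must verify that the same holds for right $\H$-linear maps between right $\H$-modules. This is a purely algebraic fact---the rank-nullity theorem is valid over the division ring $\H$ for finite-dimensional right $\H$-vector spaces---so the argument transfers, but it is worth pointing out explicitly. Everything else (orthonormality of the $(\sigma_k)$, Parseval-type identities, the decreasing rearrangement of the $\lambda_k$) is immediate from \Cref{CompSpec} and \Cref{polar}.
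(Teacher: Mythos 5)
Your proof is correct and is essentially the argument the paper intends: the paper itself gives no details, merely asserting that the corollary follows from \eqref{MinMaxNorm} as in the complex case of \cite[Theorem~1.34]{kehe}, and your two-sided estimate (truncation of the singular value decomposition for the upper bound, a rank--nullity count inside $\operatorname{span}_{\H}\{e_1,\dots,e_{n+1}\}$ for the lower bound) is precisely that standard proof transferred to the quaternionic setting. The only cosmetic difference is that your lower bound bypasses \eqref{MinMaxNorm} by a direct dimension argument instead of representing a rank-$n$ operator by $n$ functionals, and your explicit remark that rank--nullity holds for right $\H$-linear maps between right $\H$-modules is exactly the point that needs checking.
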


The following proposition is an immediate consequence of \eqref{MinMaxNorm}, cf. \cite[Corollary~XI.9.3]{ds2} for the complex case.
\begin{corollary}\label{singvalsom}
Let $T_1$ and $T_2$ be compact linear operators on $\hil$. Then, for every nonnegative $n,m\in\N_{0}$, we have that
\begin{align*}
\lambda_{n+m+1}(T_1+T_2) \leq \lambda_{n+1}(T_1) + \lambda_{m+1}(T_2)
\end{align*}
and
\begin{align*}
\lambda_{n+m+1}(T_1T_2) \leq \lambda_{n+1}(T_1)\lambda_{m+1}(T_2).
\end{align*}
\end{corollary}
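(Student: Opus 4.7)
The plan is to derive both inequalities as direct consequences of the approximation characterization of singular values from \Cref{singvalnorm}, namely that for any compact operator $T$ on $\hil$ and any $k \in \N_{0}$
\[\lambda_{k+1}(T) = \inf_{F \in \mathcal{F}_k}\|T-F\|.\]
The only additional ingredients needed are the standard rank inequalities $\mathrm{rank}(A+B)\leq \mathrm{rank}(A)+\mathrm{rank}(B)$ and $\mathrm{rank}(AB)\leq\min\{\mathrm{rank}(A),\mathrm{rank}(B)\}$, both of which hold verbatim for quaternionic right linear operators since the proofs rely only on inclusions between ranges.

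For the additive inequality, I would fix arbitrary $F_1\in\mathcal{F}_n$ and $F_2\in\mathcal{F}_m$. Since then $F_1+F_2\in\mathcal{F}_{n+m}$, \Cref{singvalnorm} combined with the triangle inequality gives
\[
\lambda_{n+m+1}(T_1+T_2) \leq \|(T_1+T_2)-(F_1+F_2)\| \leq \|T_1-F_1\|+\|T_2-F_2\|.
\]
Passing to the infimum over $F_1\in\mathcal{F}_n$ and $F_2\in\mathcal{F}_m$ independently on the right-hand side yields $\lambda_{n+m+1}(T_1+T_2)\leq \lambda_{n+1}(T_1)+\lambda_{m+1}(T_2)$.

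For the multiplicative inequality, the key idea is to build a good rank-$(n+m)$ approximation of $T_1T_2$ out of approximations $F_1,F_2$ of $T_1,T_2$. Given $F_1\in\mathcal{F}_n$ and $F_2\in\mathcal{F}_m$, I would set
\[ F := T_1 F_2 + F_1(T_2-F_2),\]
which lies in $\mathcal{F}_{n+m}$ since $T_1F_2$ has rank at most $m$ and $F_1(T_2-F_2)$ has rank at most $n$. A direct computation then yields the clean factorisation
\[T_1T_2 - F = T_1(T_2-F_2) - F_1(T_2-F_2) = (T_1-F_1)(T_2-F_2),\]
so by submultiplicativity of the operator norm and \Cref{singvalnorm} applied to $T_1T_2$,
\[\lambda_{n+m+1}(T_1T_2) \leq \|T_1T_2-F\| \leq \|T_1-F_1\|\,\|T_2-F_2\|.\]
Taking the infimum in $F_1$ and $F_2$ separately produces the claimed bound.

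No difficulty genuinely specific to the quaternionic setting arises: the substantive content is \Cref{singvalnorm}, which has already been established, and the algebraic manipulations respect that operator composition is bilinear over the reals and that submultiplicativity of the operator norm is insensitive to the choice of scalar field. The only step that requires a brief moment of thought is the choice of $F$ in the product case; the rest is bookkeeping.
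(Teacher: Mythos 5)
Your proof is correct. The paper itself gives no written argument: it declares the corollary ``an immediate consequence of \eqref{MinMaxNorm}'' and points to Dunford--Schwartz for the complex case, i.e.\ it implicitly routes the proof through the min--max characterization of \eqref{MinMaxNorm} (choosing the $n+m$ constraint vectors as the union of near-optimal families for $T_1$ and $T_2$, and, for the product, throwing in the vectors $T_2^*y_i$). You instead route everything through the finite-rank approximation characterization $\lambda_{k+1}(T)=\inf_{F\in\mathcal{F}_k}\|T-F\|$ of \Cref{singvalnorm} -- which the paper has already established from \eqref{MinMaxNorm} -- together with the rank bounds for sums and products, which indeed survive the passage to quaternionic right linear operators since they only use containments of ranges and the dimension theory of right $\H$-vector spaces. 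The two routes are close cousins, but yours has a concrete advantage: the product inequality, which is the slightly awkward case under the min--max formulation, becomes a one-line factorization $T_1T_2-F=(T_1-F_1)(T_2-F_2)$ with $F=T_1F_2+F_1(T_2-F_2)\in\mathcal{F}_{n+m}$, and the whole corollary is then genuinely ``immediate'' in a way the paper only asserts. The only cosmetic caveat is that the infimum in \Cref{singvalnorm} ranges over bounded finite-rank operators, so one should note that your $F$ is bounded -- which it plainly is, being built from bounded factors.
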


\subsection{Definition of the Schatten class}

For $p\in(0,+\infty)$ the Schatten $p$-class $S_p(\hil_\C)$ of operators
on a complex Hilbert space $\hil_\C$ is defined as the set of compact operators
whose singular value sequences are $p$-summable.
For $p\geq 1$, the Schatten $p$-norm, which assigns to each operator the $\ell^p$-norm of its singular value sequence,
turns $S_p(\hil_\C)$ into a Banach space.
However, the analogue approach does not make sense in the quaternionic setting.
Major problems appear in particular when trying to define the trace of an operator,
which plays a crucial role in the classical theory. The trace of a compact complex linear operator is defined as
\begin{equation}\label{CTrace}\tr A = \sum_{n\in\N} \langle e_n,A e_n\rangle ,\end{equation}
where $(e_n)_{n\in\N}$ is an orthonormal basis of the Hilbert space $\hil_\C$.
It can be shown that the trace is independent of the choice of the orthonormal basis and therefore well-defined.

In general, this is not true in the quaternionic setting. Consider for example a
 compact normal operator $T$ and its spectral decompositions
\[ T =  \sum_{n\in\N} e_n\lambda_n\langle e_n, \cdot\rangle =
\sum_{n\in\N}\tilde{e}_n\tilde{\lambda}_n\langle \tilde{e}_n, \cdot\rangle,\]
where the orthonormal system $(e_n)_{n\in\N}$ is such that the corresponding eigenvalues
$(\lambda_n)_{n\in\N}$ belong to the complex halfplane $\C_i^+$ and the orthonormal
system $(\tilde{e}_n)_{n\in\N}$ is such that the corresponding eigenvalues
$(\tilde{\lambda}_n)_{n\in\N}$ belong to the complex halfplane $\C_j^+$ with $i,j\in\S$
and  $i\neq j$, cf. \Cref{CompSpecPlane}. Moreover, assume that at least one eigenvalue has
nonzero imaginary part. Then
\[ \sum_{n\in\N} \langle T e_n, e_n\rangle = \sum_{n\in\N} \lambda_n\langle e_n, e_n\rangle
= \sum_{n\in\N} \lambda_n \in\C_i^+\setminus{\R}, \]
but
\[ \sum_{n\in\N} \langle T \tilde{e}_n, \tilde{e}_n\rangle = \sum_{n\in\N}
\tilde{\lambda}_n\langle\tilde{e}_n, \tilde{e}_n\rangle = \sum_{n\in\N}
\tilde{\lambda}_n \in\C_j^+\setminus{\R}.\]
Obviously, it is
 \[\sum_{n\in\N} \langle T e_n, e_n\rangle  \neq \sum_{n\in\N} \langle T \tilde{e}_n, \tilde{e}_n\rangle.\]

 In contrast to the classical theory, we have to restrict ourselves to operators
 that satisfy an additional restriction: compatibility with a chosen complex left-multiplication.

\begin{definition}
Let $J \in \mathcal{B}(\hil)$ be an anti-selfadjoint and unitary operator.
For $p\in(0,+\infty]$, we define the $(J,p)$-Schatten class of operators $S_p(J)$ as
\begin{align*}
S_p(J) := \{ T \in \B_0(\hil) :  [T,J]=0 \ \text{and}\ (\lambda_n(T))_{n\in\N}\in\ell^p\},
\end{align*}
where $(\lambda_n(T))_{n\in\N}$ denotes the sequence of singular values of $T$
and $\ell^p$ denotes the space of $p$-summable resp. bounded sequences.
For $T\in S_p(J)$ , we define
\begin{equation}\label{pNorm}
\|T\|_p = \left(\sum _{n \in \N} |\lambda_n(T)|^p \right)^{\frac{1}{p}}\qquad\text{if }p\in(0,+\infty)
\end{equation}
and
\begin{equation*}
\|T\|_{p} = \sup_{n\in\N}\lambda_n(T) = \|T\|\qquad\text{if } p = +\infty.
\end{equation*}
\end{definition}
Obviously, $S_p(J)$ is a subset of $\B_J(\hil)$, see Definition \ref{B=JJJ}. The following lemma shows
that it is isometrically isomorphic to the Schatten $p$-class on $\hil^{Ji}_+$ if $J$
is considered as a left-multiplication with $i\in\S$.
\begin{lemma} Let $J$ be a unitary and anti-selfadjoint operator and let $i\in\S$.
For all $T \in \mathcal{B}_0(\hil)$ such that $[T,J]=0$ and for each $n \in \N$, we have that
\begin{align*}
\lambda_n(T) = \lambda_n'\left(\res{Ji}(T)\right),
\end{align*}
where $\lambda_n'(A_{\C})$ is the $n$-th singular value of a
$\C_i$-linear oper\-ator~${A_{\C} \in \mathcal{B}(\hil_+^{Ji})}$.
\end{lemma}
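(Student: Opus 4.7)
The plan is to show that the polar-decomposition data of $T$ and of $T_\C:=\res{Ji}(T)$ correspond under the restriction isomorphism $\res{Ji}$, so that the spectral theorem for $|T_\C|$ yields the spectral theorem for $|T|$ with the same eigenvalues.

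First I would verify that $|T|$ lies in $\boundOP_J(\hil)$ and that $\res{Ji}(|T|)=|T_\C|$. By Theorem~\ref{OPExtension}(d) and the hypothesis $[T,J]=0$, we have $[T^*,J]=0$, hence $[T^*T,J]=0$. Since $|T|$ is the \emph{unique} positive square root of $T^*T$, the operator $-J|T|J$ is also positive (note $J^{-1}=-J$ as $J$ is anti-selfadjoint and unitary) and satisfies $(-J|T|J)^2=-J(T^*T)J=T^*T$, so $-J|T|J=|T|$, i.e.\ $[|T|,J]=0$. Thus $|T|\in\boundOP_J(\hil)$ and we can form $\res{Ji}(|T|)$, which is positive by \Cref{CorResLift}. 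Using that $\res{Ji}$ is a $*$-homomorphism (parts (d) and (e) of Theorem~\ref{OPExtension}) we get
\[ \res{Ji}(|T|)^2=\res{Ji}(|T|^2)=\res{Ji}(T^*T)=T_\C^*T_\C,\]
and the uniqueness of the positive square root on $\hil_+^{Ji}$ gives $\res{Ji}(|T|)=|T_\C|$.

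Next, since $T_\C$ is compact (the restriction of a compact operator to a closed subspace preserves compactness), $|T_\C|$ is a compact positive $\C_i$-linear operator on $\hil_+^{Ji}$. By the classical complex spectral theorem we obtain an orthonormal basis $(e_n)_{n\in\N}$ of $\hil_+^{Ji}$ of eigenvectors with eigenvalues $\mu_n\geq 0$ arranged in decreasing order; by definition these $\mu_n$ are the singular values $\lambda_n'(T_\C)$.

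The final step invokes \Cref{ComplexONB}: the system $(e_n)_{n\in\N}$ is automatically an orthonormal basis of the whole quaternionic Hilbert space $\hil$. Because $|T|$ is right $\H$-linear and agrees with $|T_\C|$ on $\hil_+^{Ji}$, the identity $|T|e_n=e_n\mu_n$ holds in $\hil$, so $(e_n,\mu_n)$ is the spectral decomposition of the compact normal operator $|T|$ in the sense of \Cref{CompSpec}. The singular values $\lambda_n(T)$ are by definition the eigenvalues of $|T|$ listed in decreasing order, hence $\lambda_n(T)=\mu_n=\lambda_n'(T_\C)$, as desired. The only mildly delicate point is the first paragraph, establishing that the modulus and restriction operations commute; everything afterwards is a direct transfer of the classical spectral theorem via the isomorphism of Remark~\ref{BJIso}.
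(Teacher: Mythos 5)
Your proof is correct and follows essentially the same route as the paper's: both reduce the statement to the complex operator $\res{Ji}(T)$ on $\hil_+^{Ji}$ and transfer its decomposition back to $\hil$ using the uniqueness of the quaternionic extension together with the fact that an orthonormal basis of $\hil_+^{Ji}$ is an orthonormal basis of $\hil$ (\Cref{ComplexONB}). The paper simply extends the singular value decomposition of $\res{Ji}(T)$ and is terser; your detour through $[|T|,J]=0$ and $\res{Ji}(|T|)=|\res{Ji}(T)|$ makes explicit why the extended decomposition is indeed the singular value decomposition of $T$ in the sense of the paper's definition, a point the paper's one-line ``we deduce'' leaves implicit.
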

\begin{proof}
Since $T$ is compact, its restriction
\(
\res{Ji}(T): \hil_+^{Ji} \rightarrow \hil_+^{Ji}
\)
 is a compact operator on $\hil_+^{Ji}$. Hence, we can find orthonormal
 sets $(e_n)_{n \in \N}$ and $(\sigma_n)_{n \in \N}$ in $\hil_+^{Ji}$ such that
\begin{align*}
Tx = \sum_{n \in \N} \sigma_n \lambda_n'\left(T \big|_{\hil_+^{Ji}}\right) \inpr{e_n}{x}~~~~~~\forall x \in \hil_+^{Ji}.
\end{align*}
By the uniqueness of the extension, it follows that this expression holds for all
$x \in \hil$ and we deduce  $\lambda_n(T) = \lambda_n'(\res{Ji}(T))$.

\end{proof}
\begin{corollary}\label{SchattenIso}
Let $J$ be an anti-selfadjoint unitary operator on $\hil$ and consider
it as a left-multiplication with $i\in\S$. For any $p\in(0,+\infty]$,
the space $S_p(J)$ is isomorphic to $S_p(\hil^{Ji}_+)$, the Schatten
$p$-class on $\hil^{Ji}_+$. An isomorphism between these spaces is
given by $T\mapsto \res{Ji}(T)$. Moreover, $\|T\|_p = \|\res{Ji}(T)\|_p$.
In particular, if $p\in[1,+\infty]$, then $\|\cdot \|_p$ is actually a
norm and $S_p(J)$ is a Banach space over $\C_i$ that is even isometrically isomorphic to $S_p(\hil^{Ji}_+)$.
\end{corollary}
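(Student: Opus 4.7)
The plan is to reduce the statement to the preceding lemma, which asserts that for any $T\in\B_0(\hil)$ with $[T,J]=0$ the singular values of $T$ and of $\res{Ji}(T)$ coincide: $\lambda_n(T)=\lambda_n'(\res{Ji}(T))$ for every $n\in\N$. Plugging this identity into the defining formula \eqref{pNorm} of the Schatten $p$-quasinorm immediately yields $T\in S_p(J)\iff \res{Ji}(T)\in S_p(\hil_+^{Ji})$, together with the equality $\|T\|_p=\|\res{Ji}(T)\|_p$ for every $p\in(0,+\infty]$.

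Next I would upgrade this to a genuine bijection. By Remark \ref{BJIso}, $\res{Ji}\colon \B_J(\hil)\to \B(\hil_+^{Ji})$ is already an isometric Banach algebra isomorphism with inverse $\lift{Ji}$, so only preservation of compactness in both directions needs to be checked. The direction ``$T$ compact $\Rightarrow$ $\res{Ji}(T)$ compact'' is immediate, because $\res{Ji}(T)$ is merely the restriction of a compact operator to the closed subspace $\hil_+^{Ji}$. For the converse, given a compact $A_{\C}\in \B(\hil_+^{Ji})$ set $T=\lift{Ji}(A_{\C})$. Choosing $j\in\S$ with $j\perp i$ and using the decomposition $\hil=\hil_+^{Ji}\oplus\hil_+^{Ji} j$ coming from \Cref{ComplexONB}, every bounded sequence in $\hil$ splits as $x_n=y_n+z_n j$ with $(y_n),(z_n)$ bounded in $\hil_+^{Ji}$; right $\H$-linearity of $T$ and the identity $T|_{\hil_+^{Ji}}=A_{\C}$ give $Tx_n = A_{\C}y_n + (A_{\C}z_n)j$, and extracting two consecutive convergent subsequences via compactness of $A_{\C}$ produces a convergent subsequence of $(Tx_n)$.

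Combining these two steps, $\res{Ji}$ is a norm-preserving bijection from $S_p(J)$ onto $S_p(\hil_+^{Ji})$, which proves the first half of the corollary for every $p\in(0,+\infty]$. For the Banach space assertion in the range $p\in[1,+\infty]$, I would recall that $S_p(\hil_+^{Ji})$ is a classical complex Banach space over $\C_i$ with respect to $\|\cdot\|_p$. According to Remark \ref{BJIso}, the restriction map is $\C_i$-linear with respect to the $\C_i$-module structure $aT=a_0T+a_1JT$ on $\B_J(\hil)$, so transporting the Banach space structure back along the isometric bijection turns $S_p(J)$ into a Banach space over $\C_i$ that is isometrically $\C_i$-linearly isomorphic to $S_p(\hil_+^{Ji})$.

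The only step that is not essentially bookkeeping is the lifting of compactness from $\res{Ji}(T)$ back to $T$, which is where I expect the bulk of the actual argument to sit; everything else is a direct transfer along the isometric isomorphism already established in Remark \ref{BJIso} and via the preceding lemma.
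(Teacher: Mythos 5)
Your proposal is correct and follows exactly the route the paper intends: the corollary is stated without proof as an immediate consequence of the preceding lemma (equality of singular values under $\res{Ji}$) together with \Cref{BJIso}, which is precisely your reduction. The only content you add beyond what the paper leaves implicit is the explicit verification that $\lift{Ji}$ preserves compactness via the splitting $x = y + zj$ with $\|x\|^2 = \|y\|^2 + \|z\|^2$, and that argument is sound.
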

Finally, as an immediate consequence of \Cref{singvalsom}, we obtain
as in the complex case that any $S_p(J$) is an ideal of $\B_J(\hil)$.
\begin{corollary}\label{ideal}
Let $p\in(0,+\infty]$. Then $S_p(J)$ is a two-sided ideal of $\B_J(\hil)$,
i.e. $ST$ and $TS$ belong to $S_p(J)$ whenever $T\in S_p(J)$ and $S\in \B_J(\hil)$.
\end{corollary}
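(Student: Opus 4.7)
The statement requires verifying three things for both $ST$ and $TS$ when $T\in S_p(J)$ and $S\in\B_J(\hil)$: compactness, commutation with $J$, and $p$-summability of the singular value sequence. I would dispatch the first two quickly, then focus on the singular value estimate, for which the cleanest tool is \Cref{singvalnorm}.

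First I would note that compactness is automatic: the set $\B_0(\hil)$ of compact quaternionic right linear operators is stable under composition with bounded operators from either side (the standard proof, that compact operators map bounded sets to relatively compact sets, goes through verbatim in the quaternionic setting). For the commutation, I would compute directly from $[S,J]=0$ and $[T,J]=0$:
\[
(ST)J = S(TJ) = S(JT) = (SJ)T = (JS)T = J(ST),
\]
and symmetrically $(TS)J = J(TS)$. Hence both $ST$ and $TS$ lie in $\B_J(\hil)\cap\B_0(\hil)$.

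The main step is the singular value bound. The key observation is that if $F\in\mathcal{F}_n$, i.e. $F$ has rank at most $n$, then $SF$ and $FS$ also have rank at most $n$, so they belong to $\mathcal{F}_n$. Using \Cref{singvalnorm}, I would then write
\[
\lambda_{n+1}(ST) = \inf_{F\in\mathcal{F}_n}\|ST-F\| \leq \inf_{F\in\mathcal{F}_n}\|ST-SF\| \leq \|S\|\inf_{F\in\mathcal{F}_n}\|T-F\| = \|S\|\,\lambda_{n+1}(T),
\]
and analogously $\lambda_{n+1}(TS)\leq \|S\|\,\lambda_{n+1}(T)$. Raising to the $p$-th power and summing yields $\|ST\|_p,\|TS\|_p \leq \|S\|\,\|T\|_p$, which establishes $ST,TS\in S_p(J)$ and simultaneously gives the ideal-norm inequality. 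For $p=\infty$ the same estimates are just the submultiplicativity of the operator norm.

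There is no real obstacle: the argument is structurally identical to the classical complex case. An alternative route, equally short, would be to apply $\res{Ji}$, invoke \Cref{SchattenIso} to transfer the problem to $\B(\hil_+^{Ji})$, use the well-known fact that $S_p(\hil_+^{Ji})$ is a two-sided ideal in $\B(\hil_+^{Ji})$, and then pull the conclusion back via $\lift{Ji}$, using the multiplicativity $\lift{Ji}(S_\C T_\C)=\lift{Ji}(S_\C)\lift{Ji}(T_\C)$ implicit in \Cref{OPExtension}(e) and \Cref{BJIso}. I would mention this as a remark, since it explains why the result is, at bottom, purely a translation of the complex statement into the quaternionic framework.
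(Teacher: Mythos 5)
Your proposal is correct. The paper itself gives no written proof: it asserts the corollary as ``an immediate consequence of \Cref{singvalsom}'', i.e.\ of the estimate $\lambda_{n+m+1}(T_1T_2)\leq\lambda_{n+1}(T_1)\lambda_{m+1}(T_2)$ with $m=0$, whereas you derive the same bound $\lambda_{n+1}(ST)\leq\|S\|\,\lambda_{n+1}(T)$ from the sibling result \Cref{singvalnorm}. Since both of those corollaries are extracted from the same min--max formula \eqref{MinMaxNorm}, the two routes are essentially the same argument; if anything, yours is slightly cleaner, because \Cref{singvalsom} is stated only for compact $T_1,T_2$ while here $S$ is merely bounded, so invoking it literally would require either extending it or inserting the observation $\lambda_1(S)=\|S\|$ --- your use of $\inf_{F\in\mathcal{F}_n}\|ST-F\|$ sidesteps that entirely. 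You also spell out the two points the paper leaves tacit, namely that $ST$ and $TS$ are compact and that they commute with $J$; the commutator computation is exactly right and is genuinely needed for membership in $S_p(J)$ as defined. The alternative you sketch via $\res{Ji}$, \Cref{SchattenIso} and the ideal property of $S_p(\hil_+^{Ji})$ is the transfer principle the paper uses for most of its other Schatten-class statements, so it is a legitimate second proof here as well.
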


\subsection{The dual of the Schatten class}
In the following, we fix a unitary, anti-selfadjoint operator $J$ and consider it as a left-multiplication with $i\in\S$.
We define the trace of an operator $T \in S_1(J)$ and establish some elementary results in order to determine the dual space of $S_p(J)$.
\begin{definition}We define the $Ji$-trace of an operator $T \in S_1(J)$ as
\begin{align*}
\Tr{Ji}(T) := \operatorname{tr}\left(\res{Ji}(T)\right)
\end{align*}
where $\operatorname{tr\left(\res{Ji}(T)\right)}$ denotes the classical trace of a complex linear operator $\res{Ji}(T)$ as defined in \eqref{CTrace}.
\end{definition}
\begin{corollary}\label{TCy}
If $T\in S_1(J)$, then
\begin{equation}\label{TraceONB}
\Tr{Ji}(T) = \sum_{n\in\N} \langle e_n, Te_n\rangle
\end{equation}
for any orthornormal basis $(e_n)_{n\in\N}$ of $\hil_+^{Ji}$.
\end{corollary}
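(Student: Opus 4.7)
The plan is to unfold the definition of $\Tr{Ji}(T)$ and reduce it to the classical complex trace formula applied to the restriction $\res{Ji}(T)\in\B(\hil_+^{Ji})$, exploiting the fact that $\hil_+^{Ji}$ is a complex Hilbert space whose inner product is the restriction of the quaternionic inner product on $\hil$.

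First I would recall that by definition $\Tr{Ji}(T) = \tr(\res{Ji}(T))$, and that the classical complex trace of the compact (in fact trace class) operator $\res{Ji}(T)\in S_1(\hil_+^{Ji})$ on the complex Hilbert space $\hil_+^{Ji}$ is given by equation \eqref{CTrace} evaluated on any orthonormal basis of $\hil_+^{Ji}$. The standard complex-theory fact that this sum is independent of the orthonormal basis (classical Lidskii/trace theory) can be invoked directly, since \Cref{SchattenIso} guarantees that $\res{Ji}(T)$ really lies in the complex Schatten $1$-class.

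Next I would pick an arbitrary orthonormal basis $(e_n)_{n\in\N}$ of $\hil_+^{Ji}$. For each $n$, since $e_n\in\hil_+^{Ji}$, the restriction $\res{Ji}(T)$ acts on it exactly as $T$ does: $\res{Ji}(T)e_n = Te_n$. Moreover, $Te_n$ again lies in $\hil_+^{Ji}$ because $T$ commutes with $J$, i.e., $J(Te_n)=TJe_n = T(e_ni) = (Te_n)i$. The $\C_i$-valued inner product of $\hil_+^{Ji}$ is by construction the restriction of the $\H$-valued inner product of $\hil$, so $\langle e_n,\res{Ji}(T)e_n\rangle_{\hil_+^{Ji}} = \langle e_n,Te_n\rangle_{\hil}$. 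Summing over $n$ therefore gives
\[
\Tr{Ji}(T) \;=\; \sum_{n\in\N}\langle e_n,\res{Ji}(T)e_n\rangle \;=\; \sum_{n\in\N}\langle e_n,Te_n\rangle,
\]
which is precisely \eqref{TraceONB}.

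No step here is really a serious obstacle; the only delicate point is ensuring that the classical trace theorem (basis-independence and absolute convergence of the series on the right) applies to $\res{Ji}(T)$, and this is exactly what \Cref{SchattenIso} delivers by identifying $S_1(J)$ isometrically with the complex Schatten class $S_1(\hil_+^{Ji})$. Once that is in hand, the statement reduces to matching the inner product of $\hil_+^{Ji}$ with the restriction of the quaternionic inner product and using that $T$-invariance of $\hil_+^{Ji}$ follows from $[T,J]=0$.
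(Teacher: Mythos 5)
Your argument is correct and is exactly the reasoning the paper intends: the corollary is stated as an immediate consequence of the definition $\Tr{Ji}(T)=\tr(\res{Ji}(T))$, the basis-independence of the classical complex trace (applicable because \Cref{SchattenIso} puts $\res{Ji}(T)$ in $S_1(\hil_+^{Ji})$), and the fact that the inner product of $\hil_+^{Ji}$ is the restriction of that of $\hil$ while $[T,J]=0$ makes $\hil_+^{Ji}$ invariant under $T$. The paper gives no separate proof, so your write-up simply makes explicit the same route.
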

If $T$ is positive or selfadjoint, then also in the quaternionic setting there are no restrictions on the choice of the orthonormal basis in \eqref{TraceONB}.
\begin{lemma}\label{TracePos}
If $T$ is a positive and compact operator on $\hil$ with singular values $(\lambda_n)_{n\in\N}$
then
\begin{align*}
\sum _{n \in \N} \lambda_n = \sum _{n \in \N} \inpr{e_n}{Te_n}
\end{align*}
for each orthonormal basis $(e_n)_{n\in\N}$ of $\hil$. If in particular $T\in S_1(J)$, then \eqref{TraceONB} holds true for any orthonormal basis of $\hil$.
\end{lemma}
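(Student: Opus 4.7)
The plan is to reduce everything to the spectral decomposition of the positive operator $T$ given by \Cref{CompSpec}, which, because $T$ is positive, yields an orthonormal basis $(f_m)_{m\in\N}$ of eigenvectors with \emph{real} non-negative eigenvalues $\lambda_m \geq 0$ (and these real eigenvalues are precisely the singular values of $T$, because $|T|=T$).

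First I would pick an arbitrary orthonormal basis $(e_n)_{n\in\N}$ of $\hil$ and substitute the spectral decomposition $T e_n = \sum_m f_m \lambda_m \langle f_m, e_n\rangle$ into $\langle e_n, T e_n\rangle$. Using right-linearity of the inner product, quaternionic hermiticity, and the fact that $\lambda_m \in \R$ commutes with everything, this simplifies to
\[
\langle e_n, T e_n\rangle \;=\; \sum_{m\in\N} \lambda_m \, |\langle f_m, e_n\rangle|^2.
\]
Summing over $n$ gives a double series of non-negative real numbers, so Tonelli's theorem justifies interchanging the summations, and Parseval's identity applied to each $f_m$ in the basis $(e_n)$ then yields
\[
\sum_{n\in\N} \langle e_n, T e_n\rangle \;=\; \sum_{m\in\N} \lambda_m \sum_{n\in\N}|\langle f_m, e_n\rangle|^2 \;=\; \sum_{m\in\N} \lambda_m \|f_m\|^2 \;=\; \sum_{m\in\N}\lambda_m.
\]
This proves the first identity (in $[0,+\infty]$, with both sides simultaneously finite or infinite).

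For the second part, assume additionally $T \in S_1(J)$, so $\sum_m \lambda_m<\infty$. By \Cref{ComplexONB}, any orthonormal basis $(g_n)_{n\in\N}$ of the complex Hilbert space $\hil_+^{Ji}$ is also an orthonormal basis of $\hil$. Applying the first part to such a basis and combining with \Cref{TCy}, which states $\Tr{Ji}(T)=\sum_n \langle g_n, T g_n\rangle$, we obtain $\Tr{Ji}(T)=\sum_m \lambda_m$. But the right-hand side is, by the first part, equal to $\sum_n \langle e_n, T e_n\rangle$ for \emph{every} orthonormal basis $(e_n)_{n\in\N}$ of $\hil$, so \eqref{TraceONB} indeed persists on all of $\hil$ and not only on $\hil_+^{Ji}$.

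The main obstacle is really at a single conceptual step: the rearrangement and interchange of sums in the double series. In the general quaternionic setting these manipulations fail because the eigenvalues of a compact normal operator are quaternions which do not commute with the inner products $\langle f_m, e_n\rangle$, and the signs in $\langle e_n, T e_n\rangle$ cannot be controlled to apply Tonelli. Positivity of $T$ is what makes both problems disappear simultaneously, by forcing the $\lambda_m$ to be real and non-negative; this is the only place where the hypothesis is used, and everything else is essentially an unpacking of the spectral theorem and Parseval's identity.
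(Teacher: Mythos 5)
Your proof is correct and follows essentially the same route as the paper: expand $\langle e_n, Te_n\rangle$ via the spectral decomposition of the positive operator $T$, interchange the resulting double series of non-negative terms, and apply Parseval's identity. Your explicit remarks on why positivity is needed (real eigenvalues commuting, Tonelli applicable) and your spelled-out deduction of the second claim via \Cref{TCy} are slightly more detailed than the paper's version but do not change the argument.
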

\begin{proof} Since $T$ is compact and positive, we have $T = \sum_{n\in\N} \eta_n\lambda_n\langle \eta_n,\cdot\rangle$ for some orthonormal basis $(\eta_n)_{n\in\N}$ of $\hil$ by \Cref{CompSpec}. For any orthonormal basis $(e_n)_{n\in\N}$ of $\hil$, we therefore have
\begin{align*}
\inpr{e_n}{Te_n} = \sum_{m\in\N} \lambda_m | \inpr{\eta_m}{e_n}|^2.
\end{align*}
Using Fubini's theorem and Parseval's identity $\|x\|^2 =   \sum _{n\in\N}| \inpr{e_n}{x}|^2$, we are left with
\begin{align*}
\sum _{n \in \N} \inpr{e_n}{Te_n} &=\sum_{n\in\N}  \sum_{m \in\N} \lambda_m   |\inpr{\eta_m}{e_n}|^2 \\
&= \sum _{m \in\N} \lambda_m  \sum _{n\in\N}| \inpr{\eta_m}{e_n}|^2= \sum _{m \in \N} \lambda_m.
\end{align*}
\end{proof}

\begin{corollary}
Let $T\in S_1(J)$ be selfadjoint. Then \eqref{TraceONB} holds true for any orthonormal basis of $\hil$.
\end{corollary}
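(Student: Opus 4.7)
The plan is to reduce the selfadjoint case to the positive case treated in \Cref{TracePos}. Specifically, decompose $T$ into a difference of two positive operators in $S_1(J)$, apply \Cref{TracePos} to each piece with the given orthonormal basis of $\hil$, and then recombine using the linearity of $\Tr{Ji}$.

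Concretely, set
\[ T_+ := \tfrac12(|T| + T), \qquad T_- := \tfrac12(|T| - T). \]
First I would verify that $T_\pm \in S_1(J)$ and that both are positive. Since $T$ is compact and selfadjoint, $|T| = \sqrt{T^*T} = \sqrt{T^2}$ is a compact positive operator with the same singular values as $T$; in particular $|T|\in S_1(J)$. That $|T|$ commutes with $J$ follows from the fact that $T^2$ commutes with $J$ (as $T$ does) together with the uniqueness of the positive square root; hence $[T_\pm,J]=0$ and $T_\pm\in\B_J(\hil)$. Positivity of $T_\pm$ is seen from the spectral decomposition of the compact selfadjoint operator $T$: writing $T = \sum_{n\in\N} e_n\lambda_n\langle e_n,\cdot\rangle$ with $\lambda_n\in\R$, we have $|T| = \sum_{n\in\N} e_n|\lambda_n|\langle e_n,\cdot\rangle$, so $T_+ = \sum_{n\in\N} e_n\max(\lambda_n,0)\langle e_n,\cdot\rangle$ and $T_- = \sum_{n\in\N} e_n\max(-\lambda_n,0)\langle e_n,\cdot\rangle$ are both positive and compact. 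Since $T_\pm \le |T|$ in the positive operator order and $|T|\in S_1(J)$, also $T_\pm\in S_1(J)$.

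Next, by \Cref{TracePos} applied to $T_+$ and $T_-$ individually, the sums $\sum_{n\in\N}\langle e_n, T_\pm e_n\rangle$ equal the sums of the respective eigenvalues for any orthonormal basis $(e_n)_{n\in\N}$ of $\hil$; in particular they are real, absolutely convergent and independent of the choice of basis. Combining linearly,
\[ \sum_{n\in\N}\langle e_n, T e_n\rangle = \sum_{n\in\N}\langle e_n, T_+ e_n\rangle - \sum_{n\in\N}\langle e_n, T_- e_n\rangle \]
is also basis-independent. Finally, specializing this identity to any orthonormal basis of $\hil_+^{Ji}$ (which by \Cref{ComplexONB} is also an orthonormal basis of $\hil$) and comparing with \Cref{TCy} gives $\sum_{n\in\N}\langle e_n, T e_n\rangle = \Tr{Ji}(T)$, as desired.

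The routine parts are the spectral decomposition and the linearity step; the one point that deserves care is the verification that $T_\pm$ genuinely lie in $S_1(J)$, i.e.\ that $|T|$ commutes with $J$. I would expect this to be the main obstacle only insofar as one must invoke the uniqueness of the positive square root (or equivalently the continuous functional calculus applied to $T^2$) to transport the commutation $[T^2,J]=0$ to $[|T|,J]=0$; everything else then follows mechanically from the positive case.
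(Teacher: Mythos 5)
Your proof is correct and follows essentially the same route as the paper: decompose the selfadjoint $T$ as a difference $T_+ - T_-$ of positive trace-class operators commuting with $J$, apply \Cref{TracePos} to each, and recombine by linearity. The only cosmetic difference is that the paper obtains $T_\pm$ by splitting the complex restriction $\res{Ji}(T)$ and lifting back via $\lift{Ji}$ (so that $[T_\pm,J]=0$ is automatic), whereas you construct $T_\pm=\tfrac12(|T|\pm T)$ directly on $\hil$ and verify $[|T|,J]=0$ via uniqueness of the positive square root — both yield the same operators.
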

\begin{proof}
By \Cref{CorResLift}, the operator $\res{Ji}(T)$ is a bounded selfadjoint operator on $\hil_+^{Ji}$ and can therefore be decomposed into $\res{Ji}(T) = T_{+,\C} - T_{-,\C}$ with positive operators $T_{+,\C},T_{-,\C}\in\boundOP(\hil_+^{Ji})$. If we set $T_{\pm} = \lift{Ji}(T_{\pm,\C})$, then  $T = T_{+} - T_{-}$ with positive operators $T_{+},T_{-}\in\boundOP_J(\hil)$. The additivity of the $Ji$-trace and \Cref{TracePos} imply
\begin{align*}
\Tr{Ji}(T) &= \Tr{Ji}(T_+) - \Tr{Ji}(T_-) \\
&= \sum_{n\in\N} \langle e_n, T_+e_n\rangle - \sum_{n\in\N} \langle e_n, T_-e_n\rangle = \sum_{n\in\N} \langle e_n, Te_n\rangle
\end{align*}
for any orthonormal basis $(e_n)_{n\in\N}$ of $\hil$.

\end{proof}

Observe that $\Tr{Ji}(T)$ of course depends on the imaginary unit $i\in\S$. However, as the next result shows, the choice of the imaginary unit $i$ has no essential impact.
\begin{lemma}
Let $T\in S_1(J)$ and let $i,j$ in $\S$. Then
\[\Tr{Jj}(T) = \phi(\Tr{Ji}(T)),\]
where $\phi$ is the isomorphism $\phi( z_0 + iz_1)= z_0 + jz_1$ between the complex fields $\C_i$ and $\C_j$.
\end{lemma}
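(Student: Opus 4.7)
The plan is to reduce to the selfadjoint case by a Cartesian-type decomposition. Write $T = T_1 + T_2$ with $T_1 := \tfrac12(T+T^*)$ selfadjoint and $T_2 := \tfrac12(T-T^*)$ anti-selfadjoint. Since $J^* = -J$ and $[T,J] = 0$, one checks $[T^*,J] = 0$, and since $|T^*|$ and $|T|$ share the same nonzero singular values, $T^* \in S_1(J)$; hence $T_1,T_2\in S_1(J)$. Because $\phi$ and $\Tr{Ji}$ are both additive in $T$, it suffices to prove the identity separately for $T_1$ and $T_2$.

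For the selfadjoint piece $T_1$: by the corollary preceding the lemma, $\Tr{Ji}(T_1) = \sum_n \langle e_n,T_1e_n\rangle$ computed with any orthonormal basis of $\hil$; selfadjointness forces each term $\langle e_n,T_1 e_n\rangle$ to be real, so $\Tr{Ji}(T_1)\in\R$, and the same basis-independence yields $\Tr{Jj}(T_1) = \Tr{Ji}(T_1)$. Since $\phi$ restricts to the identity on $\R$, we get $\phi(\Tr{Ji}(T_1)) = \Tr{Jj}(T_1)$.

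For the anti-selfadjoint piece $T_2$: observe that $J^{-1} = -J$, so setting $S := -JT_2$ gives $T_2 = JS$, and a direct computation $S^* = -T_2^*J^* = -(-T_2)(-J) = -JT_2 = S$ shows $S$ is selfadjoint; moreover $[S,J]=0$ and $S\in S_1(J)$ by Corollary \ref{ideal}. Now take an orthonormal basis $(e_n)_{n\in\N}$ of $\hil_+^{Ji}$, so $Je_n = e_ni$. Using the adjoint and quaternionic hermiticity, $\langle ya,x\rangle = \bar a\langle y,x\rangle$, we compute
\begin{equation*}
\langle e_n, JSe_n\rangle = \langle -Je_n, Se_n\rangle = \langle -e_n i, Se_n\rangle = i\langle e_n, Se_n\rangle,
\end{equation*}
so $\Tr{Ji}(T_2) = i\,\Tr{Ji}(S)$. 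By the selfadjoint case above, $\Tr{Ji}(S) = \Tr{Jj}(S) =: r \in \R$. The identical computation with a basis $(e_n')$ of $\hil_+^{Jj}$ (where $Je_n' = e_n'j$) yields $\Tr{Jj}(T_2) = j\,\Tr{Jj}(S) = jr$. Hence $\phi(\Tr{Ji}(T_2)) = \phi(ir) = jr = \Tr{Jj}(T_2)$, and combining with the selfadjoint piece finishes the proof.

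The only real subtlety is keeping the sign conventions straight in the step $\langle e_n,JSe_n\rangle = i\langle e_n,Se_n\rangle$: one has to carefully use $J^*=-J$, $Je_n = e_ni$, and $\overline{-i}=i$ in the right order. Once this is handled, everything follows from the basis-independence of the trace on selfadjoint operators, which is already established.
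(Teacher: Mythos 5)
Your proof is correct, but it takes a genuinely different route from the paper's. The paper's argument is a three-line direct computation: by Lemma \ref{Turn} choose a unit quaternion $q$ with $j = q^{-1}iq$, observe that $\phi(z)=q^{-1}zq$ and that right multiplication by $q$ carries an orthonormal basis $(e_n)$ of $\hil_+^{Ji}$ to an orthonormal basis $(e_nq)$ of $\hil_+^{Jj}$, and then apply Corollary \ref{TCy} in the new basis to get $\Tr{Jj}(T)=\sum_n\langle e_nq,Te_nq\rangle=q^{-1}\Tr{Ji}(T)q$. Your argument instead splits $T$ into selfadjoint and anti-selfadjoint parts, reduces the selfadjoint part to the basis-independence corollary (where the trace is real and $\phi$ acts trivially), and handles the anti-selfadjoint part via the factorization $T_2=JS$ with $S$ selfadjoint, which cleanly produces the factor $i$ versus $j$ in front of a common real number. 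All the individual steps check out: $[T^*,J]=0$ follows by taking adjoints of $[T,J]=0$, $T^*\in S_1(J)$ since adjoints preserve singular values (or via $\res{Ji}(T^*)=\res{Ji}(T)^*$ and the complex theory), $S=-JT_2\in S_1(J)$ by the ideal property, and the sign bookkeeping in $\langle e_n,JSe_n\rangle=i\langle e_n,Se_n\rangle$ is right. What the paper's proof buys is brevity and a conceptual explanation of why $\phi$ appears at all (it literally is conjugation by the $q$ that rotates $i$ to $j$); what yours buys is independence from Lemma \ref{Turn} and a structural insight, namely that the real part of the trace is basis-free while the imaginary part is pinned to the unit singled out by $J$ itself. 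Either is acceptable.
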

\begin{proof}
Let $(e_n)_{n\in\N}$ be an orthonormal basis of $\hil_+^{Ji}$ and let $q\in\H$ with $|q| = 1$ such that $j = q^{-1} i q$, cf. \Cref{Turn}. Then $\phi(z) = q^{-1}zq$ for any $z\in\C_i$. Moreover, $(e_nq)_{n\in\N}$ is an orthonormal basis of $\hil_+^{Jj}$ as
\[ J(e_nq) = (Je_n)q = e_niq = (e_nq)q^{-1}iq = (e_nq) j. \]
Since $|q| = 1$, we have $q^{-1} = \overline{q}$ and thus, by \Cref{TCy}, we obtain
\begin{align*}
\phi(\Tr{Ji}(T)) = q^{-1}\Tr{Ji}(T)q = \sum_{n=0}^{+\infty} \langle e_nq, Te_nq\rangle = \Tr{Jj}(T).
\end{align*}

\end{proof}
\begin{lemma}\label{TraceProp}
Suppose $1 \leq p < +\infty$ and $\holder$. If $T \in S_p(J)$ and $S \in S_q(J)$, then
\begin{enumerate}[(i)]
\item $TS$ and $ST$ belong to the trace class $S_1(J)$
\item $\Tr{Ji}(TS) = \Tr{Ji}(ST)$
\item \label{TraceHoelder}$|\Tr{Ji}(TS)| \leq \|T\|_p \|S\|_q$.
\end{enumerate}
\begin{proof}
Applying \Cref{SchattenIso}, we can reduce the statement to the case of operators on a complex Hilbert space, where we know that these results are true. Hence,
\begin{align*}
&T \in S_p(J)~\text{and}~S\in S_q(J)\\
 \iff & \res{Ji}(T) \in S_p(\hil_+^{Ji})~\text{and}~\res{Ji}(S)\in S_q(\hil_+^{Ji})  \\
\implies & \res{Ji}(TS) \in S_1(\hil_+^{Ji})\\
\iff & TS \in S_1(J).
\end{align*}
 The second statement follows  from the definition:
\begin{align*}
\Tr{Ji}(TS) &= \tr\left(\res{Ji}(TS)\right)\\
&= \tr\left(\res{Ji}(T)\res{Ji}(S)\right)\\
&= \tr\left(\res{Ji}(S)\res{Ji}(T)\right)\\
&= \tr\left(\res{Ji}(ST)\right)\\
&= \Tr{Ji}(ST).
\end{align*}
The final statement relies on the fact that $\res{Ji}$ and $\lift{Ji}$ are p-norm preserving:
\begin{align*}
|\Tr{Ji}(TS)| &= |\tr\left(\res{Ji}(TS)\right)|\\
&= |\tr\left(\res{Ji}(T)\res{Ji}(S)\right)|\\
&\leq \|\res{Ji}(T)\|_p \|\res{Ji}(S)\|_q\\
&= \|T\|_p \|S\|_q,
\end{align*}
which finishes the proof of the lemma.

\end{proof}
\end{lemma}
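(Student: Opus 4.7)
The proof plan is to reduce everything to the corresponding statements in the complex Schatten class theory on $\hil^{Ji}_+$, using the isometric isomorphism $\res{Ji}: S_p(J)\to S_p(\hil^{Ji}_+)$ provided by Corollary \ref{SchattenIso}, together with the fact that $\res{Ji}$ is multiplicative (which follows from property (e) in Theorem \ref{OPExtension}, since $\res{Ji}$ is the inverse of the lifting $\lift{Ji}$ on $\B_J(\hil)$). The definition $\Tr{Ji}(T) := \tr(\res{Ji}(T))$ is tailor-made for this transfer.

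For part (i), I would argue as follows. Suppose $T\in S_p(J)$ and $S\in S_q(J)$. By Corollary \ref{SchattenIso}, $\res{Ji}(T)\in S_p(\hil^{Ji}_+)$ and $\res{Ji}(S)\in S_q(\hil^{Ji}_+)$. The classical Hölder inequality for Schatten classes on a complex Hilbert space gives $\res{Ji}(T)\res{Ji}(S)\in S_1(\hil^{Ji}_+)$. By multiplicativity of $\res{Ji}$, this product equals $\res{Ji}(TS)$, and pulling back through the isomorphism of Corollary \ref{SchattenIso} yields $TS\in S_1(J)$. The argument for $ST\in S_1(J)$ is identical.

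For part (ii), I would chain together $\Tr{Ji}(TS)=\tr(\res{Ji}(TS))=\tr(\res{Ji}(T)\res{Ji}(S))$, then invoke the cyclicity of the classical trace on $\B(\hil^{Ji}_+)$ (which applies since both factors lie in complex Schatten classes of dual exponents), and finally reverse the same identifications to conclude $\tr(\res{Ji}(S)\res{Ji}(T))=\Tr{Ji}(ST)$.

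For part (iii), I would use that Corollary \ref{SchattenIso} guarantees $\|\cdot\|_p$ is preserved under $\res{Ji}$, so
\[
|\Tr{Ji}(TS)|=|\tr(\res{Ji}(T)\res{Ji}(S))|\le \|\res{Ji}(T)\|_p\|\res{Ji}(S)\|_q=\|T\|_p\|S\|_q,
\]
where the middle inequality is the classical trace Hölder inequality. No step here is a genuine obstacle, since all the real analytic content is already available on $\hil^{Ji}_+$; the only thing that needs care is verifying that $\res{Ji}$ is a ring (not merely vector-space) isomorphism, which is immediate from Theorem \ref{OPExtension}(e). The presentation therefore only needs to make explicit this transfer of Hölder's inequality and trace cyclicity through $\res{Ji}$.
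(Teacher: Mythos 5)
Your proposal is correct and follows essentially the same route as the paper: reduce all three statements to the classical Schatten-class results on $\hil_+^{Ji}$ via the isometric, multiplicative isomorphism $\res{Ji}$ of \Cref{SchattenIso}, using the definition $\Tr{Ji}(T)=\tr(\res{Ji}(T))$. Your explicit remark that multiplicativity of $\res{Ji}$ comes from \Cref{OPExtension}(e) is a point the paper uses only implicitly, but the argument is otherwise identical.
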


\begin{lemma}
Suppose $1 \leq p <+ \infty$ and $\holder$. If $T \in S_p(J)$, then
\begin{align*}
\|T\|_p = \sup\left\{ |\tr_{Ji}(ST)|: \|S\|_q=1, S \in S_q(J)\right\}.
\end{align*}
\end{lemma}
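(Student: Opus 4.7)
The plan is to reduce this to the corresponding duality result for Schatten classes on complex Hilbert spaces, which is classical, by exploiting the isometric isomorphism between $S_p(J)$ and $S_p(\hil_+^{Ji})$ already established in \Cref{SchattenIso}.

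First, I would observe that the upper bound is immediate: for any $S \in S_q(J)$ with $\|S\|_q = 1$, \Cref{TraceProp}\eqref{TraceHoelder} yields $|\Tr{Ji}(ST)| \leq \|S\|_q \|T\|_p = \|T\|_p$. Hence $\sup\{|\Tr{Ji}(ST)| : \|S\|_q = 1,\,S\in S_q(J)\} \leq \|T\|_p$. The whole content of the lemma therefore lies in showing the opposite inequality, i.e. that the supremum is actually attained (or approached).

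For the reverse inequality, I would use the fact that $\res{Ji}$ and $\lift{Ji}$ are mutually inverse isometric isomorphisms between the spaces $S_r(J)$ and $S_r(\hil_+^{Ji})$ for every $r \in [1,+\infty]$ (\Cref{SchattenIso}), and that by the definition of $\Tr{Ji}$ together with the multiplicativity of $\res{Ji}$ (\Cref{OPExtension}(e)), one has
\begin{equation*}
\Tr{Ji}(ST) = \tr\bigl(\res{Ji}(S)\res{Ji}(T)\bigr)
\end{equation*}
for all $S\in S_q(J)$ and $T\in S_p(J)$. Since the lifting map $\lift{Ji}$ is a bijection from $S_q(\hil_+^{Ji})$ onto $S_q(J)$ that preserves the $q$-norm, the supremum in the statement equals
\begin{equation*}
\sup\bigl\{|\tr(S_\C\res{Ji}(T))| : \|S_\C\|_q = 1,\ S_\C \in S_q(\hil_+^{Ji})\bigr\}.
\end{equation*}
By the classical $S_p$--$S_q$ duality on the complex separable Hilbert space $\hil_+^{Ji}$ (see e.g.\ \cite{kehe}), this supremum equals $\|\res{Ji}(T)\|_p$, which by \Cref{SchattenIso} coincides with $\|T\|_p$. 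Combined with the upper bound, this establishes the identity.

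The only step that requires a moment of care is verifying that the scalar field causes no trouble when invoking the complex duality: the supremum is over $\C_i$-linear operators $S_\C$ on $\hil_+^{Ji}$ and the trace takes values in $\C_i$, but this is exactly the setting in which the classical theorem is stated. Everything else is bookkeeping via the functorial properties of $\res{Ji}$ and $\lift{Ji}$ recorded in \Cref{OPExtension} and \Cref{SchattenIso}. The main conceptual point — and the only place one might expect difficulty — is therefore simply recognizing that the whole proof of the duality is already encoded in the fact that $S_p(J)$ is an isometric copy of the complex Schatten class, so no independent quaternionic construction of an extremal $S$ is required.
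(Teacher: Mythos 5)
Your proposal is correct and follows essentially the same route as the paper: both reduce the identity to the classical Schatten duality on the complex Hilbert space via the isometric, norm-preserving, multiplicative correspondence between $S_r(J)$ and the complex Schatten classes, and then translate the supremum back. The only difference is cosmetic — you separate out the easy upper bound via the trace--H\"older inequality before invoking the complex result, whereas the paper obtains the whole equality in one chain of identities.
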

\begin{proof}
Using the fact that this result is true for operators on a complex Hilbert space and that $\res{Ji}$ is a $p$-norm preserving isomorphism, we find that
\begin{align*}
\|T\|_p &= \|\res{Ji}(T)\|_p\\
&=  \sup\left\{ |\operatorname{tr}\left((S_\C\res{Ji}(T)\right)|: \|S_\C\|_q=1, S_\C \in S_q(\hil_+^{Ji})\right\}\\
&= \sup\left\{ |\operatorname{tr}\left((\res{Ji}(ST)\right)|: \|\res{Ji}(S)\|_q=1, S \in S_q(J)\right\}\\
&= \sup\left\{ |\operatorname{Tr}_{Ji}(ST)|: \|S\|_q=1, S \in S_q(J)\right\}.
\end{align*}
\end{proof}

We can also use the fact that $S_p(J)$ is isometrically isomorphic to the Schatten $p$-class of operators on the complex Hilbert space $\hil_+^{Ji}$ to determine its dual space.
\begin{theorem}\label{dual}
If $1 \leq p < +\infty$ and $\holder$, then
\begin{align*}
S_p(J)^* = S_q(J)
\end{align*}
with equal norms and under the pairing $\inpr{T}{S} = \operatorname{Tr}_{Ji}(TS).$
\end{theorem}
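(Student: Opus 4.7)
The strategy is to transport the statement to the complex setting via the isometric isomorphism $\res{Ji}:S_p(J)\to S_p(\hil_+^{Ji})$ provided by \Cref{SchattenIso} and then invoke the classical duality theorem $S_p(\hil_+^{Ji})^* = S_q(\hil_+^{Ji})$ under the trace pairing (see \cite{kehe}). Concretely, I would define the map
\[ \Phi: S_q(J) \to S_p(J)^*,\qquad \Phi(S)(T) := \Tr{Ji}(TS), \]
and prove that $\Phi$ is a well-defined isometric isomorphism.

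First, I would check that $\Phi(S)$ is a bounded $\C_i$-linear functional on $S_p(J)$. Linearity is immediate from the linearity of the trace and the product, while boundedness with $\|\Phi(S)\|\le\|S\|_q$ follows from part \eqref{TraceHoelder} of \Cref{TraceProp}. The reverse inequality $\|\Phi(S)\|\ge \|S\|_q$ is precisely the content of the previous lemma characterizing $\|S\|_q$ as a supremum of trace pairings (applied with the roles of $p$ and $q$ swapped), so $\Phi$ is an isometry; injectivity follows automatically.

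For surjectivity, let $\varphi\in S_p(J)^*$. Since $\res{Ji}: S_p(J)\to S_p(\hil_+^{Ji})$ is an isometric isomorphism of Banach spaces over $\C_i$, the functional
\[ \varphi_\C := \varphi \circ \lift{Ji} \in S_p(\hil_+^{Ji})^* \]
has the same norm as $\varphi$. By the classical duality theorem on a complex Hilbert space, there exists a unique $S_\C \in S_q(\hil_+^{Ji})$ with $\|S_\C\|_q = \|\varphi_\C\|$ such that $\varphi_\C(T_\C) = \tr(T_\C S_\C)$ for all $T_\C \in S_p(\hil_+^{Ji})$. Setting $S:=\lift{Ji}(S_\C)\in S_q(J)$, \Cref{SchattenIso} gives $\|S\|_q = \|S_\C\|_q$, and for any $T\in S_p(J)$ we have
\[ \varphi(T) = \varphi_\C(\res{Ji}(T)) = \tr\bigl(\res{Ji}(T)\res{Ji}(S)\bigr) = \tr\bigl(\res{Ji}(TS)\bigr) = \Tr{Ji}(TS) = \Phi(S)(T). \]
Hence $\varphi = \Phi(S)$, which establishes surjectivity and the equality of norms.

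The only genuinely delicate point is bookkeeping: one must verify that $\varphi_\C$ is indeed $\C_i$-linear (which uses that $\lift{Ji}$ is $\C_i$-linear, as observed in \Cref{BJIso}) and that the trace formula is preserved under $\res{Ji}$ and $\lift{Ji}$, which is built into the definition $\Tr{Ji} = \tr\circ\res{Ji}$. Every other step is a direct translation between the quaternionic and complex sides through the isomorphism $\res{Ji}$, so the proof reduces cleanly to the complex case.
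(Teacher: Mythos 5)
Your proposal is correct and follows essentially the same route as the paper: reduce to the classical duality $S_p(\hil_+^{Ji})^* \cong S_q(\hil_+^{Ji})$ via the isometric isomorphisms $\res{Ji}$ and $\lift{Ji}$, using \Cref{TraceProp}\eqref{TraceHoelder} for boundedness of the pairing and the lift of the classical representing operator for surjectivity. One minor caveat: your appeal to the preceding supremum lemma ``with the roles of $p$ and $q$ swapped'' does not literally apply when $p=1$ (since then $q=+\infty$ falls outside that lemma's hypotheses), but this step is redundant anyway, as your surjectivity argument together with the uniqueness of the complex representing operator already yields the equality of norms, exactly as in the paper.
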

\begin{proof}
First, assume that $\xi$ is a continuous linear functional on $S_p(J)$.
By \Cref{SchattenIso}, the mapping $T_{\C}\mapsto \xi(\lift{Ji}(T_\C))$ is a linear
functional on $S_p(\hil_+^{Ji})$ with $\|\xi\circ\lift{Ji}\| = \|\xi\|$.

Since $S_p(\hil_+^{Ji})^* \cong S_q(\hil_+^{Ji})$ under the pairing
$\langle T_\C, S_\C\rangle = \tr (T_\C S_\C)$, there exists an operator
 $S_\C\in S_q(\hil_+^{Ji})$ with $\|S_\C\|_q = \|\xi\circ\lift{Ji}\|$ such that
\begin{align*}
 \xi\circ\lift{Ji}(T_\C) = \tr(T_\C S_\C),\quad \forall T_\C\in\hil_+^{Ji}.
\end{align*}
We define now
$S_{\xi} := \lift{Ji}(S_\C)$.
Then
\begin{align*}
\|S_\xi\|_q = \|S_\C\|_q = \|\xi\circ\lift{Ji}\| = \|\xi\|
\end{align*}
and
\[
\begin{split}
\xi(T) &= \xi\circ\lift{Ji}\circ\res{Ji}(T)
\\
&
= \tr(\res{Ji}(T)S_\C)
\\
&
= \tr(\res{Ji}(TS_\xi))
\\
&
= \Tr{Ji}(TS_\xi).
\end{split}
\]
Hence, the mapping $\Phi:\xi\mapsto S_\xi$ is an isometric $\C_i$-linear mapping of $S_p(J)^*$ into $S_q(J)$.

Conversely, it follows from the $\C_i$-linearity of the $Ji$-trace and (\ref{TraceHoelder}) of \Cref{TraceProp} that, for $S\in S_q(J)$, the mapping $T\mapsto \xi_S(T) := \Tr{Ji}(TS)$ is a bounded $\C_i$-linear functional on $S_p(J)$ with $\|\xi_S\| \leq \|S\|_q$. Consequently, $\Phi$ is even invertible and in turn $S_p(J)^*$ is isometrically isomorphic to $S_q(J)$.

\end{proof}

\subsection{Characterizations of Schatten class operators}

In this section we take a closer look at the singular values of Schatten class operators in order to arrive at necessary and sufficient conditions for an operator to belong to the Schatten class. A crucial observation was made in \Cref{SchattenIso}:
\[T\in S_p(J) \Longleftrightarrow \res{Ji}(T)\in S_p(\hil^{Ji}_+).\]

\begin{lemma}\label{SpS1Pos}
Let $T$ be a positive and compact operator on $\hil$ and $p\in(0,+\infty)$. Then
\begin{align*}
T \in S_p \iff T^p \in S_1.
\end{align*}
Moreover, $\|T\|_p^p = \|T_{}^p\|^{}_1$.
\end{lemma}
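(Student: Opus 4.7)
The plan is to reduce the question to a statement about the summability of eigenvalue sequences by exploiting the spectral decomposition of the positive compact operator $T$.

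First, since $T$ is positive and compact, \Cref{CompSpec} (applied to the normal operator $T$) gives an orthonormal basis $(e_n)_{n\in\N}$ of eigenvectors with corresponding real, nonnegative eigenvalues $(\lambda_n)_{n\in\N}$ arranged in decreasing order, so that
\[ T x = \sum_{n\in\N} e_n \lambda_n \langle e_n, x\rangle. \]
Because $T$ is positive we have $|T|=T$, so these $\lambda_n$ are simultaneously the eigenvalues and the singular values of $T$. Next, I would invoke the fractional-power definition from the remark following \Cref{CompSpec} to write
\[ T^p x = \sum_{n\in\N} e_n \lambda_n^p \langle e_n, x\rangle, \]
which is itself a positive compact operator whose singular values are exactly $(\lambda_n^p)_{n\in\N}$, still in decreasing order since $p>0$.

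With these two spectral expansions in hand, both conclusions are essentially one line. The condition $T\in S_p$ is $\sum_n \lambda_n^p <\infty$, while $T^p \in S_1$ is $\sum_n \lambda_n^p <\infty$, so the two are literally the same numerical statement. Comparing norms then gives
\[ \|T\|_p^p = \sum_{n\in\N}\lambda_n^p = \sum_{n\in\N}\lambda_n(T^p) = \|T^p\|_1, \]
which delivers the asserted isometric identity.

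The one genuine thing to check is that the $J$-compatibility passes from $T$ to $T^p$, i.e.\ that $[T,J]=0$ implies $[T^p,J]=0$, so that $T^p$ lies in the same $(J,\cdot)$-Schatten class used throughout this section. I expect this to be the only non-cosmetic step. Two routes are available: (a) push everything through the isomorphism of \Cref{SchattenIso}, noting that $\res{Ji}(T)$ is a positive compact operator on $\hil_+^{Ji}$ with eigenvalues $\lambda_n$ and that classical functional calculus gives $\res{Ji}(T)^p\in S_1(\hil_+^{Ji}) \Leftrightarrow \res{Ji}(T)\in S_p(\hil_+^{Ji})$, then identify $\res{Ji}(T^p)=\res{Ji}(T)^p$ from the spectral series; or (b) argue directly that if $J$ commutes with $T$ it commutes with each spectral projection onto an eigenspace (equivalently with any Borel function of $T$), and therefore with $T^p$. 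Either route is routine once the spectral decomposition is in place, which is why I regard the lemma as mostly a bookkeeping exercise built on \Cref{CompSpec}, the fractional-power remark, and \Cref{SchattenIso}.
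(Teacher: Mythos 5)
Your proposal is correct and follows essentially the same route as the paper: take the spectral decomposition of the positive compact operator $T$, observe that its eigenvalues coincide with its singular values, identify the singular values of $T^p$ as $(\lambda_n^p)_{n\in\N}$, and read off both the equivalence and the norm identity. The only difference is cosmetic — the paper passes through the statement $\sigma_S(T^p)=\{\lambda_n^p\}\cup\{0\}$ via the spectral mapping, while you read the singular values of $T^p$ directly from its defining series; your extra discussion of $[T^p,J]=0$ is not needed for the lemma as stated (which makes no reference to $J$), though it is a reasonable point to flag for the later applications to $S_p(J)$.
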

\begin{proof} Let $T = \sum_{n\in\N} e_n\langle e_n, \cdot \rangle \lambda_n$ be a spectral decomposition of $T$. Since the eigenvalues $\lambda_n$ are positive, they coincide with the singular values of $T$. Moreover, as in the case of complex operators, we have $\sigma_S(T) = \{ \lambda_n:n\in\N\}\cup\{0\}$, cf. \cite{COp}. The $S$-spectral theorem implies that $\sigma_S(T^p) = \{ \lambda_n^p:n\in\N\}\cup\{0\}$. Consequently, the singular value sequence of $T^p$ is $(\lambda_n^p)_{n\in\N}$, and hence
 \[ \|T^p\|_1 = \sum_{n\in\N}\lambda_n^p = \|T\|_p^p.\]

\end{proof}

\begin{theorem}\label{SpS1}
If $T$ is a compact operator on $\hil$ such that $[T,J]=0$ and $p\in( 0,+\infty)$ then
\begin{align*}
T \in S_p(J) \iff |T|^p=(T^*T)^{\frac{p}{2}} \in S_1(J) \iff T^*T \in S_{\frac{p}{2}}(J).
\end{align*}
Moreover
\begin{align*}
\|T\|_p^p = \| |T| \|_p^p = \| |T|^p \|_1 = \|T^*T\|^{\frac{p}{2}}_{\frac{p}{2}}.
\end{align*}
As a consequence, we have that
\begin{align*}
T \in S_p(J) \iff |T| \in S_p(J).
\end{align*}
\end{theorem}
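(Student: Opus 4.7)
The plan is to reduce everything to \Cref{SpS1Pos} (which handles positive operators) by passing through $|T|$, and then to verify that the $J$-compatibility condition is preserved under the operations involved.

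First I would settle the commutation conditions. From $[T,J] = 0$, taking adjoints and using $J^* = -J$, one gets $[T^*,J] = 0$, hence $[T^*T, J] = 0$. To see that $[|T|,J] = 0$, I would note that $|T|$ is the \emph{unique} positive square root of $T^*T$; since $J$ is unitary, $J|T|J^{-1}$ is again positive and $(J|T|J^{-1})^2 = JT^*TJ^{-1} = T^*T$, so uniqueness forces $J|T|J^{-1} = |T|$. Compactness of $|T| = \sqrt{T^*T}$, of $|T|^p$, and of $T^*T$ follows from compactness of $T$ (the positive compact operator $T^*T$ has a spectral decomposition $\sum e_n \lambda_n^2 \langle e_n,\cdot\rangle$ with $\lambda_n\to 0$, and the continuous functions $t\mapsto t^{1/2}$, $t\mapsto t^{p/2}$, $t\mapsto t^{p}$ vanish at $0$, so applying the functional calculus keeps us in $\B_0(\hil)$).

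Next I would establish the key identification of singular values. By the very definition of singular values (the eigenvalues of $|T|$ taken in descending order) and because the singular values of a positive compact operator are exactly its eigenvalues, $\lambda_n(T) = \lambda_n(|T|)$ for every $n$. This immediately gives the final consequence $T\in S_p(J) \Leftrightarrow |T|\in S_p(J)$ together with $\|T\|_p = \||T|\|_p$, once the commutation step above is in place.

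Finally I would chain the three equivalences: apply \Cref{SpS1Pos} to the positive compact operator $|T|$ with exponent $p$ to obtain $|T|\in S_p \Leftrightarrow |T|^p \in S_1$ and $\||T|\|_p^p = \||T|^p\|_1$; and apply it to the positive compact operator $T^*T$ with exponent $p/2$ to obtain $T^*T\in S_{p/2} \Leftrightarrow (T^*T)^{p/2}\in S_1$ and $\|T^*T\|_{p/2}^{p/2} = \|(T^*T)^{p/2}\|_1 = \||T|^p\|_1$. Combining these with $\lambda_n(T) = \lambda_n(|T|)$ yields all three equivalences and the chain of norm equalities. The main (minor) obstacle is simply checking that $J$-compatibility propagates through the functional calculus, i.e.\ that $|T|$, $|T|^p$ and $T^*T$ all still commute with $J$; this follows from the uniqueness-of-positive-square-root argument above, together with the observation that if $[A,J] = 0$ for a positive compact $A$, then the operators defined from $A$ via the continuous functional calculus also commute with $J$ (alternatively, one can note that $\res{Ji}$ intertwines the functional calculi and reduce the statement to the classical complex case, as was done in \Cref{SchattenIso}).
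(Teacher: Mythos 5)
Your proposal is correct, but it takes a genuinely different route from the paper. The paper's proof is a one-line reduction to the complex case: it transports the whole statement through the isometric isomorphism $\res{Ji}$ of \Cref{SchattenIso}, writes $|\res{Ji}(T)|^p = \res{Ji}(|T|^p)$, and invokes the corresponding classical theorem for complex Schatten classes (Zhu, Theorem~1.26); the remaining equivalences and norm identities are dispatched the same way. You instead stay entirely in the quaternionic setting: you verify explicitly that the commutation with $J$ propagates from $T$ to $T^*$, $T^*T$, $|T|$ and $|T|^p$ (via adjoints and the uniqueness of the positive square root -- a point the paper leaves implicit, since the identity $\res{Ji}(|T|^p) = |\res{Ji}(T)|^p$ already presupposes that $|T|^p$ preserves $\hil_+^{Ji}$), then use $\lambda_n(T) = \lambda_n(|T|)$ and two applications of \Cref{SpS1Pos} (to $|T|$ with exponent $p$ and to $T^*T$ with exponent $p/2$) to chain the equivalences and the norm equalities. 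What your approach buys is self-containedness and transparency about the $J$-compatibility bookkeeping, relying only on the quaternionic \Cref{SpS1Pos} rather than an external citation; what the paper's approach buys is brevity and uniformity, since the single reduction via $\res{Ji}$ handles all three equivalences and all the norm identities simultaneously. You even note the paper's route as an alternative at the end, so the two arguments are consistent with each other.
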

\begin{proof}
We start by proving the first equivalence (the other ones follow analogously from the corresponding results for complex linear operators, cf.~\cite[Theorem~1.26]{kehe}):
\begin{align*}
T \in S_p(J) &\iff \res{Ji}(T) \in S_p(\hil_+^{Ji})\\
&\iff |\res{Ji}(T)|^p = \res{Ji}(|T|^p) \in S_1(\hil_+^{Ji})\\
&\iff |T|^p \in S_p(J).
\end{align*}
For the equality of the norms we prove the first one (the rest is proven in a similar way):
\begin{align*}
\|T\|_p^p &= \|\res{Ji}(T)\|_p^p= \||\res{Ji}(T)|\|_p^p= \|\res{Ji}(|T|)\|_p^p= \||T|\|_p^p.
\end{align*}

\end{proof}

Since $\res{Ji}$ and $\lift{Ji}$ are $p$-norm preserving, we easily obtain further characterizations of $(J,p)$-Schatten class operators from the respective results in the complex case, cf. \cite[Theorems~1.27--1.29]{kehe}.

\begin{theorem}
Suppose that $T$ is a compact operator on a quaternionic Hilbert-space $\hil$ with $[T,J]=0$ and that $p \geq 1$. Then $T$ is in $S_p(J)$ if and only if
\begin{align*}
\sum_{n\in\N} |\inpr{e_n}{Te_n}|^p < +\infty
\end{align*}
for all orthonormal sets $(e_n)_{n\in\N}$ in $\hil_+^{Ji}$. If $T$ is also selfadjoint then
\begin{align*}
\|T\|_p &= \sup \left\{ \left[\sum_{n \in \N}|\inpr{e_n}{Te_n} |^p\right]^{\frac{1}{p}}
\ : \  \{e_n\} ~\text{orthonormal set in}~ \hil_+^{Ji}\right\}.
\end{align*}
\end{theorem}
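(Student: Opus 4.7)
The plan is to reduce everything to the corresponding statement for complex Hilbert spaces via the isometric isomorphism $\res{Ji}$ established in \Cref{SchattenIso}. The crucial observation is that, because $[T,J]=0$, the operator $T$ maps $\hil_+^{Ji}$ into itself and its restriction $\res{Ji}(T)\in\boundOP(\hil_+^{Ji})$ is compact and lies in $S_p(\hil_+^{Ji})$ if and only if $T\in S_p(J)$; moreover, for vectors $e\in\hil_+^{Ji}$ we have $Te = \res{Ji}(T)e$ and the quaternionic inner product $\langle e, Te\rangle$ coincides with the $\C_i$-valued inner product on $\hil_+^{Ji}$ used to define $\res{Ji}(T)$.

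Next, I would invoke the classical analogue of the present theorem for complex compact operators on $\hil_+^{Ji}$ (see \cite[Theorem~1.27]{kehe}): a compact $\C_i$-linear operator $A$ on $\hil_+^{Ji}$ belongs to $S_p(\hil_+^{Ji})$ if and only if $\sum_{n\in\N}|\langle e_n,Ae_n\rangle|^p<+\infty$ for every orthonormal set $(e_n)_{n\in\N}$ in $\hil_+^{Ji}$. Applying this to $A = \res{Ji}(T)$ and using the identification of the inner products described above, we immediately obtain that $T\in S_p(J)$ is equivalent to $\sum_{n\in\N}|\langle e_n,Te_n\rangle|^p<+\infty$ for all orthonormal sets $(e_n)_{n\in\N}$ in $\hil_+^{Ji}$.

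For the second part, assume additionally that $T$ is selfadjoint. By \Cref{CorResLift}, the operator $\res{Ji}(T)$ is selfadjoint on $\hil_+^{Ji}$. The complex version of the statement (cf.~\cite[Theorem~1.27]{kehe}) then gives
\[\|\res{Ji}(T)\|_p = \sup\left\{\left[\sum_{n\in\N}|\langle e_n, \res{Ji}(T)e_n\rangle|^p\right]^{1/p} : (e_n)_{n\in\N}\text{ orthonormal set in }\hil_+^{Ji}\right\}.\]
Since $\|T\|_p = \|\res{Ji}(T)\|_p$ by \Cref{SchattenIso} and $\langle e_n, \res{Ji}(T)e_n\rangle = \langle e_n, Te_n\rangle$ for $e_n\in\hil_+^{Ji}$, the desired identity for $\|T\|_p$ follows at once.

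The argument is essentially a transfer principle, so the only point that requires any care is verifying that the $\C_i$-valued inner product on $\hil_+^{Ji}$ is literally the restriction of the quaternionic inner product; this is part of the structure described in the preliminaries and is what makes the sums on the two sides of the characterization coincide term by term. No genuine obstacle arises beyond this bookkeeping.
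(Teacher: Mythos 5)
Your proposal is correct and follows exactly the route the paper takes: the paper dispenses with all three characterization theorems in this subsection by the single remark that $\res{Ji}$ and $\lift{Ji}$ are $p$-norm preserving, so the statements transfer from \cite[Theorems~1.27--1.29]{kehe} via \Cref{SchattenIso}, which is precisely your argument. Your added care about the inner product on $\hil_+^{Ji}$ being the restriction of the quaternionic one, and the use of \Cref{CorResLift} for selfadjointness, only makes explicit what the paper leaves implicit.
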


\begin{theorem}
Suppose that $T$ is a compact operator on $\hil$ with $[T,J]=0$ and that ${p \in[1,+\infty)}$. Then $T$ is in $S_p(J)$ if and only if
\begin{align*}
\sum_{n\in\N} |\inpr{\sigma_n}{Te_n}|^p < +\infty
\end{align*}
for all orthonormal sets $\{e_n\}$ and $\{\sigma_n\}$ in $\hil_+^{Ji}$. If $T$ is also positive then
\begin{align*}
\|T\|_p &= \sup \left\{ \left[\sum_{n\in\N}| \inpr{\sigma_n}{Te_n} |^p\right]^{\frac{1}{p}} : \{e_n\}~\text{and}~\{\sigma_n\}~\text{orthonormal sets in}~ \hil_+^{Ji}\right\}.
\end{align*}
\end{theorem}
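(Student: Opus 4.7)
The plan is to reduce the statement to the analogous complex-Hilbert-space result (\cite[Theorem~1.29]{kehe}) via the isometric isomorphism $\res{Ji}\colon S_p(J)\to S_p(\hil_+^{Ji})$ provided by \Cref{SchattenIso}. The key observation that makes this transfer immediate is that orthonormal sets in $\hil_+^{Ji}$ are, by construction, orthonormal in $\hil$ with the same scalar products (see \Cref{ComplexONB} and the remark following it), so the matrix coefficients $\langle \sigma_n, Te_n\rangle$ computed in $\hil$ agree with the matrix coefficients of the restriction $\res{Ji}(T)$ computed in $\hil_+^{Ji}$.

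First I would spell out this agreement. For any orthonormal sets $(e_n)_{n\in\N}$ and $(\sigma_n)_{n\in\N}$ in $\hil_+^{Ji}$, since $[T,J]=0$ we have $Te_n\in\hil_+^{Ji}$, and since the scalar product of $\hil_+^{Ji}$ is the restriction of that of $\hil$,
\[ \langle \sigma_n, Te_n\rangle_{\hil} = \langle \sigma_n, \res{Ji}(T)e_n\rangle_{\hil_+^{Ji}}.\]
Consequently
\[ \sum_{n\in\N} |\langle \sigma_n, Te_n\rangle|^p = \sum_{n\in\N} |\langle \sigma_n, \res{Ji}(T)e_n\rangle|^p, \]
and the orthonormal sets appearing on the right-hand side range over precisely the orthonormal sets of the complex Hilbert space $\hil_+^{Ji}$.

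Next I would invoke the complex-case theorem applied to $\res{Ji}(T)\in\boundOP(\hil_+^{Ji})$: it asserts that $\res{Ji}(T)\in S_p(\hil_+^{Ji})$ is equivalent to the finiteness of the above sums for all choices of orthonormal sets in $\hil_+^{Ji}$. Combining this with the equivalence $T\in S_p(J)\Longleftrightarrow \res{Ji}(T)\in S_p(\hil_+^{Ji})$ from \Cref{SchattenIso} yields the first statement of the theorem.

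For the second (positivity) statement, \Cref{CorResLift} guarantees that $\res{Ji}(T)$ is positive on $\hil_+^{Ji}$ whenever $T$ is positive on $\hil$. The complex-case result then gives
\[ \|\res{Ji}(T)\|_p = \sup\Bigl\{\, \Bigl[\sum_{n\in\N}|\langle \sigma_n, \res{Ji}(T)e_n\rangle|^p\Bigr]^{1/p} : (e_n),(\sigma_n)\text{ o.n.\ in }\hil_+^{Ji}\Bigr\}, \]
and the identification of matrix coefficients together with $\|T\|_p = \|\res{Ji}(T)\|_p$ (\Cref{SchattenIso}) finishes the proof. No step here is really the main obstacle; the only nontrivial point is that the orthonormal sets are taken in $\hil_+^{Ji}$ rather than in $\hil$, which is precisely what makes the transfer via $\res{Ji}$ clean — had one allowed arbitrary orthonormal sets in $\hil$, the characterization would fail just as the basis-independence of the trace does, as discussed before \Cref{B=JJJ}.
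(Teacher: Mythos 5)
Your proposal is correct and follows essentially the same route as the paper, which disposes of this theorem (together with its neighbours) by the single remark that $\res{Ji}$ and $\lift{Ji}$ are $p$-norm preserving, so the statement transfers from \cite[Theorems~1.27--1.29]{kehe}. You merely make explicit the identification of the matrix coefficients $\langle\sigma_n,Te_n\rangle$ with those of $\res{Ji}(T)$ and the use of \Cref{SchattenIso} and \Cref{CorResLift}, which is exactly the intended argument.
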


\begin{theorem}
Suppose that $T$ is a compact operator on $\hil$ with $[T,J]=0$ and that ${0 < p \leq 2}$. Then, for any orthonormal basis $\{e_n\}$ of $\hil_+^{Ji}$, we have
\begin{align*}
\|T\|_p^p \leq \sum _{n = 1}^\infty \sum _{k=1}^\infty |\inpr{e_k}{Te_n}|^p.
\end{align*}
\end{theorem}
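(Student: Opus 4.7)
The plan is to reduce the statement to the known complex case via the isometric isomorphism $\res{Ji}: S_p(J) \to S_p(\hil_+^{Ji})$ established in \Cref{SchattenIso}. The essential point is that both sides of the claimed inequality can be expressed entirely in terms of quantities living on the complex Hilbert space $\hil_+^{Ji}$.

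First I would observe that since $[T,J] = 0$, by \Cref{OPExtension}(c), the operator $T$ preserves $\hil_+^{Ji}$, so for all $e_n \in \hil_+^{Ji}$ we have $Te_n = \res{Ji}(T)e_n \in \hil_+^{Ji}$. Next I would verify that for any $x,y \in \hil_+^{Ji}$, the quaternionic inner product $\langle x,y\rangle$ actually lies in $\C_i$. Indeed, combining right-linearity and quaternionic hermiticity with $Jy = yi$ and $J^* = -J$ yields
\[
\langle x, y\rangle i = \langle x, yi\rangle = \langle x, Jy\rangle = \langle -Jx, y\rangle = i\langle x,y\rangle,
\]
so $\langle x,y\rangle$ commutes with $i$ and thus belongs to $\C_i$. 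Consequently, the inner products $\langle e_k, Te_n\rangle$ appearing in the statement coincide with the $\C_i$-valued inner products on $\hil_+^{Ji}$ applied to $e_k$ and $\res{Ji}(T)e_n$.

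Then I would invoke the classical complex version of the result (see, e.g., Theorem~1.29 in \cite{kehe}), which asserts that for any compact $\C_i$-linear operator $A_\C$ on the complex Hilbert space $\hil_+^{Ji}$, any orthonormal basis $(e_n)_{n\in\N}$ of $\hil_+^{Ji}$, and any $0 < p \leq 2$,
\[
\|A_\C\|_p^p \leq \sum_{n=1}^{\infty}\sum_{k=1}^{\infty} |\langle e_k, A_\C e_n\rangle|^p.
\]
Applying this with $A_\C = \res{Ji}(T)$ and using \Cref{SchattenIso} to rewrite $\|\res{Ji}(T)\|_p = \|T\|_p$, together with $\langle e_k, \res{Ji}(T)e_n\rangle = \langle e_k, Te_n\rangle$ from the first step, gives exactly the desired inequality.

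There is no real obstacle here: once the correspondence between $T$ and $\res{Ji}(T)$ is set up, the result is a direct transcription of the complex case. The only point requiring any care is confirming that the inner product takes values in $\C_i$ on $\hil_+^{Ji}$, which ensures that the quaternionic absolute value $|\langle e_k, Te_n\rangle|$ agrees with the complex absolute value on the right-hand side, so that the complex inequality translates verbatim.
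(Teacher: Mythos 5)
Your proposal is correct and takes essentially the same route as the paper, which disposes of this theorem by the single remark that $\res{Ji}$ and $\lift{Ji}$ are $p$-norm preserving, so the statement transfers verbatim from the complex case (Theorems~1.27--1.29 of Zhu) applied to $\res{Ji}(T)$ on $\hil_+^{Ji}$. Your explicit check that the inner product restricted to $\hil_+^{Ji}$ is $\C_i$-valued, so that the moduli on the right-hand side agree with the complex ones, is a useful detail the paper leaves implicit.
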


\begin{proposition}\label{SpCrit3}
Suppose $T$ is a positive, compact operator on $\hil$ and $x$ is a unit vector in $\hil$. Then
\begin{itemize}
\item $\inpr{x}{T^px} \geq \inpr{x}{Tx}^p$ for all $p \in [1,+\infty)$.
\item $\inpr{x}{T^px} \leq \inpr{x}{Tx}^p$ for all $0 < p \leq 1$.
\end{itemize}
\end{proposition}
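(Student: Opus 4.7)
The plan is to reduce the proposition to a standard Jensen's inequality argument by exploiting the spectral decomposition of the positive compact operator $T$. The key observation is that although the operator acts on a quaternionic Hilbert space, its eigenvalues are real and non-negative, which makes the scalar manipulations go through as in the classical case.

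First, I would invoke \Cref{CompSpec} applied to $T$ (which is normal because it is positive and selfadjoint) to write
\[ T = \sum_{n\in\N} e_n\lambda_n\langle e_n,\cdot\rangle, \]
where $(e_n)_{n\in\N}$ is an orthonormal basis of $\hil$ consisting of eigenvectors and $\lambda_n\geq 0$ are real. Using the definition of fractional powers of a positive compact operator recalled in the remark after \Cref{CompSpec} (or equivalently the $S$-functional calculus), I would write
\[ T^p = \sum_{n\in\N} e_n\lambda_n^p\langle e_n,\cdot\rangle. \]
Because the $\lambda_n$ are real they commute with all quaternionic factors, so for a unit vector $x\in\hil$ I obtain the scalar identities
\[ \langle x,Tx\rangle = \sum_{n\in\N}\lambda_n|\langle e_n,x\rangle|^2, \qquad \langle x,T^px\rangle = \sum_{n\in\N}\lambda_n^p|\langle e_n,x\rangle|^2, \]
both of which are non-negative real numbers.

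Next I would set $t_n := |\langle e_n,x\rangle|^2 \geq 0$. Since $\|x\| = 1$, Parseval's identity gives $\sum_{n\in\N} t_n = 1$, so $(t_n)_{n\in\N}$ is a probability distribution on the non-negative reals $(\lambda_n)_{n\in\N}$. The two displayed identities then read
\[ \langle x,Tx\rangle = \sum_{n\in\N} t_n\lambda_n, \qquad \langle x,T^px\rangle = \sum_{n\in\N} t_n\lambda_n^p.\]
Now Jensen's inequality for the function $\varphi(t)=t^p$ on $[0,+\infty)$ concludes the argument: for $p\geq 1$ the function $\varphi$ is convex, hence
\[ \langle x,Tx\rangle^p = \varphi\!\left(\sum_{n\in\N} t_n\lambda_n\right) \leq \sum_{n\in\N} t_n\varphi(\lambda_n) = \langle x,T^px\rangle, \]
while for $0<p\leq 1$ the function $\varphi$ is concave, which reverses the inequality and gives $\langle x,T^px\rangle \leq \langle x,Tx\rangle^p$.

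There is essentially no serious obstacle here; the only point that requires a moment of care is checking that passage from the series representation of $T$ to $\langle x,Tx\rangle$ produces a real-valued series, which follows because each $\lambda_n$ is real and therefore commutes with the quaternionic inner products $\langle x,e_n\rangle$ and $\langle e_n,x\rangle$, collapsing the product into $|\langle e_n,x\rangle|^2\lambda_n$. Once this is established, the proof is purely scalar and Jensen's inequality finishes the argument.
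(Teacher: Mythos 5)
Your proof is correct and follows essentially the same route as the paper: both arguments use the spectral decomposition of $T$ together with Parseval's identity to reduce the claim to the scalar inequality comparing $\sum_{n}t_n\lambda_n^p$ with $\left(\sum_{n}t_n\lambda_n\right)^p$ for the probability weights $t_n=|\langle e_n,x\rangle|^2$. The only difference is cosmetic: you invoke Jensen's inequality for $t\mapsto t^p$ directly, whereas the paper derives the same power-mean inequality by splitting $|\langle e_n,x\rangle|^2$ into the factors $|\langle e_n,x\rangle|^{2/p}|\langle e_n,x\rangle|^{2/q}$ and applying H\"older's inequality with the conjugate pair $(p,q)$, resp. $(1/p,q)$ when $0<p\leq 1$; the two closing steps are interchangeable here.
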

\begin{proof}
Let $Tx = \sum_{n\in\N}  e_n \lambda_n \inpr{e_n}{x}$ be a spectral decomposition for the operator $T$. Then, for all $p >0$ and $x \in \hil$, we have by the spectral theorem that
\begin{align*}
T^px &= \sum_{n\in\N}  e_n \lambda_n^p \inpr{e_n}{x}
\end{align*}
and thus
\begin{align*}
\inpr{x}{T^px} &= \sum_{n\in\N} \lambda_n^p |\inpr{e_n}{x}|^2.
\end{align*}
We also know that for every Hilbert basis $(\xi_n)_{n\in\N}$ of $\hil$ we have that
\begin{align*}
\|x\|^2 = \sum _{n\in\N} |\inpr{\xi_n}{x}|^2,  \quad\forall x \in \hil.
\end{align*}
Let us first assume that $p \geq 1$ and let $q$ be its conjugate index. Applying H\"older's inequality  gives
\begin{align*}
\inpr{x}{Tx} &= \left(\sum _{n\in\N}  \lambda_n |\inpr{e_n}{x}|^2\right)\\
&= \left(\sum _{n\in\N}  \lambda_n |\inpr{e_n}{x}|^{\frac{2}{p}} |\inpr{e_n}{x}|^{\frac{2}{q}}\right)\\
&\leq \left(\sum _{n\in\N} \lambda_n^p |\inpr{e_n}{x}|^2\right)^{\frac{1}{p}}\left(\sum _{n\in\N} |\inpr{e_n}{x}|^2\right)^{\frac{1}{q}}\\
&= \inpr{x}{T^px}^{\frac{1}{p}}
\end{align*}
and since $p \geq 1$ taking the $p$-th power preserves the inequality.

If $0 < p \leq 1$ then we can find  $q \geq 1$ such that: $p+\frac{1}{q}=1$. Using the H\"older inequality with the conjugate pair $(\frac{1}{p},q)$ gives us that
\begin{align*}
\inpr{x}{T^px} &= \sum _{n\in\N} \lambda_n^p |\inpr{e_n}{x}|^2\\
&= \sum _{n\in\N} \lambda_n^p |\inpr{e_n}{x}|^{2p}|\inpr{e_n}{x}|^{\frac{2}{q}}\\
&\leq  \left(\sum _{n\in\N}\lambda_n |\inpr{e_n}{x}|^2\right)^p \left(\sum _{n\in\N} |\inpr{e_n}{x}|^2\right)^{\frac{1}{q}}\\
&= \inpr{x}{Tx}^p
\end{align*}
and this finishes the proof.

\end{proof}

\begin{corollary}
Suppose that $T$ is a positive, compact operator on $\hil$ such that $[T,J]=0$ and that $(e_n)_{n\in\N}$ is an orthonormal basis of $\hil$. If $p\in [1,+\infty)$, then the condition
\begin{align*}
\sum_{n\in\N} \inpr{e_n}{Te_n}^p < +\infty
\end{align*}
is necessary for $T \in S_p(J)$. If $0<p\leq 1$ then this condition is sufficient.
\end{corollary}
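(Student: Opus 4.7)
The plan is to reduce both implications to the positive case of Theorem~\ref{SpS1} (which for positive $T$ reads $T\in S_p(J) \iff T^p\in S_1(J)$) combined with Proposition~\ref{SpCrit3} (the pointwise comparison between $\langle x,T^px\rangle$ and $\langle x,Tx\rangle^p$) and Lemma~\ref{TracePos} (which states that for a positive compact operator the sum $\sum_n\langle e_n, Te_n\rangle$ equals the sum of the singular values for any orthonormal basis of $\hil$, with no need to pre-suppose trace-class membership).

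For necessity when $p\in[1,+\infty)$, I start from $T\in S_p(J)$. Since $T$ is positive, $|T|=T$ and Theorem~\ref{SpS1} yields $T^p\in S_1(J)$ with $\|T^p\|_1=\|T\|_p^p$. The operator $T^p$, defined via the spectral decomposition of $T$, is again positive and compact and commutes with $J$ because $T$ does. Lemma~\ref{TracePos} applied to $T^p$ therefore gives $\sum_{n\in\N}\langle e_n,T^pe_n\rangle = \|T^p\|_1<+\infty$ for the given orthonormal basis of $\hil$. Since each $e_n$ is a unit vector and $p\geq 1$, Proposition~\ref{SpCrit3} yields $\langle e_n, Te_n\rangle^p\leq \langle e_n, T^pe_n\rangle$ for every $n$, so summing gives $\sum_{n\in\N}\langle e_n, Te_n\rangle^p\leq \|T^p\|_1<+\infty$.

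For sufficiency when $0<p\leq 1$, I assume $\sum_{n\in\N}\langle e_n, Te_n\rangle^p<+\infty$ and want to conclude $T\in S_p(J)$. By Theorem~\ref{SpS1} it suffices to show $T^p\in S_1(J)$, and as above $T^p$ is positive, compact, and commutes with $J$. Proposition~\ref{SpCrit3} now gives the opposite inequality $\langle e_n, T^pe_n\rangle\leq \langle e_n, Te_n\rangle^p$ for every $n$, so $\sum_{n\in\N}\langle e_n, T^pe_n\rangle \leq \sum_{n\in\N}\langle e_n, Te_n\rangle^p<+\infty$. Lemma~\ref{TracePos} applied to $T^p$ identifies the left-hand side with the sum of the singular values of $T^p$, which is therefore finite; hence $T^p\in S_1(J)$ and the Theorem~\ref{SpS1} equivalence closes the argument.

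There is no real obstacle here: the whole statement is a direct consequence of the chain Proposition~\ref{SpCrit3} $\Rightarrow$ Lemma~\ref{TracePos} $\Rightarrow$ Theorem~\ref{SpS1}. The only points worth verifying explicitly are that $T^p$ inherits from $T$ the properties of being positive, compact, and commuting with $J$ (all immediate from the spectral decomposition of $T$), so that Lemma~\ref{TracePos} and the $S_1(J)$-membership statement apply to $T^p$ in both directions.
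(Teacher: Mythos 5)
Your argument is correct and follows essentially the same route as the paper: the chain $T\in S_p(J)\iff T^p\in S_1(J)$ (the paper cites Lemma~\ref{SpS1Pos}, you cite Theorem~\ref{SpS1}, which coincide for positive $T$ since $|T|=T$), then Lemma~\ref{TracePos} applied to $T^p$ to identify $\sum_n\langle e_n,T^pe_n\rangle$ with $\|T^p\|_1$ for the given basis, and finally Proposition~\ref{SpCrit3} to compare termwise with $\langle e_n,Te_n\rangle^p$ in the direction appropriate to each range of $p$. Your explicit check that $T^p$ inherits positivity, compactness, and commutation with $J$ is a sensible addition that the paper leaves implicit.
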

\begin{proof}
From \Cref{SpS1Pos} and \Cref{TracePos}, we have
\begin{align*}
T \in S_p(J) \iff T^p \in S_1(J) \iff \sum_{n\in\N} \inpr{e_n}{T^pe_n} < +\infty.
\end{align*}
Applying \Cref{SpCrit3} gives the result.

\end{proof}

\begin{theorem}
Suppose $T$ is a compact operator on $\hil$ with $[T,J]=0$ and $p\geq 2$, then
\begin{align*}
T \in S_p(J) \iff \sum_{n\in\N}\|Te_n\|^p <+ \infty
\end{align*}
for all orthonormal sets $\{e_n\}$ in $\hil_+^{Ji}$. Moreover,
\begin{align*}
\|T\|_p &= \sup \left\{ \left[\sum_{n\in\N}\|Te_n\|^p\right]^{\frac{1}{p}} \mid \{e_n\}~\text{orthonormal in}~\hil_+^{Ji}\right\}.
\end{align*}
\end{theorem}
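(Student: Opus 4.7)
The plan is to reduce the statement to the corresponding result for complex Hilbert spaces via the isometric isomorphism $\res{Ji}: S_p(J) \to S_p(\hil_+^{Ji})$ established in \Cref{SchattenIso}. The key observation is that $[T,J]=0$ forces $\hil_+^{Ji}$ to be an invariant subspace of $T$: indeed, for $e\in\hil_+^{Ji}$ one has $J(Te) = T(Je) = T(ei) = (Te)i$, so $Te\in\hil_+^{Ji}$. Consequently $Te = \res{Ji}(T)e$, and since the scalar product on $\hil_+^{Ji}$ is just the restriction of the one on $\hil$, we obtain the identity $\|Te\|_{\hil} = \|\res{Ji}(T)e\|_{\hil_+^{Ji}}$ for every $e\in\hil_+^{Ji}$.

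With this identity in hand, I would invoke the classical complex analog (cf.\ Theorem 1.33 in \cite{kehe}), which states that for a compact operator $A_\C$ on a complex separable Hilbert space and $p\geq 2$, one has $A_\C\in S_p$ if and only if $\sum_{n\in\N}\|A_\C e_n\|^p < +\infty$ for every orthonormal set $\{e_n\}$, in which case $\|A_\C\|_p$ coincides with the supremum of these sums. Applying this to $A_\C = \res{Ji}(T)$ on $\hil_+^{Ji}$ yields the chain of equivalences
\[
T\in S_p(J) \iff \res{Ji}(T)\in S_p(\hil_+^{Ji}) \iff \sum_{n\in\N}\|Te_n\|^p < +\infty
\]
for all orthonormal sets $\{e_n\}\subset\hil_+^{Ji}$, while the norm identity $\|T\|_p = \|\res{Ji}(T)\|_p$ from \Cref{SchattenIso}, combined once more with the norm equality $\|Te\|_{\hil} = \|\res{Ji}(T)e\|_{\hil_+^{Ji}}$, produces the claimed sup-formula.

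There is essentially no substantive obstacle: the proof is a routine transfer along the $p$-norm preserving isomorphism $\res{Ji}$, in the same spirit as the characterizations proved in the immediately preceding theorems of this subsection. The only point worth verifying carefully is that orthonormality and the sums $\sum_{n\in\N}\|Te_n\|^p$ are computed identically whether $\{e_n\}$ is regarded as a subset of the complex Hilbert space $\hil_+^{Ji}$ or of the ambient quaternionic space $\hil$; this is immediate from the definition of the complex Hilbert structure on $\hil_+^{Ji}$ as the restriction of the quaternionic one, so the transfer goes through without further work.
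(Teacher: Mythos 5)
Your proposal is correct and follows exactly the paper's own route: the paper proves this theorem by the one-line observation that $\res{Ji}$ is $p$-norm preserving and invokes \cite[Theorem~1.33]{kehe} for the complex case. Your additional verification that $\|Te\|_{\hil} = \|\res{Ji}(T)e\|_{\hil_+^{Ji}}$ for $e\in\hil_+^{Ji}$ is a useful explicit check that the paper leaves implicit, but it does not change the argument.
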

\begin{proof}
Since $\res{Ji}$ is $p$-norm preserving, this follows immediately from the corresponding results for complex linear operators \cite[Theorem~1.33]{kehe}.
\end{proof}

\section{The Berezin Transform}
Let $\hil$ be a quaternionic reproducing kernel Hilbert space of functions on the unit ball, that is a Hilbert space of left slice hyperholomorphic functions in the unit ball $\D \subset \mathbb{H}$ with the property that for each $w \in \D$ the point evaluation ${f \mapsto f(w)}$ is a bounded right linear functional on $\hil$. From the Riesz representation theorem we know that there exists a unique function $K_w \in \hil$ such that
\begin{align*}
f(w)  = \inpr{K_w}{f}, & \qquad \forall f \in \hil.
\end{align*}
The function
\begin{align*}
K(q,w) := K_w(q), & \qquad q,w \in \D
\end{align*}
is called the reproducing kernel of $\hil$.
\begin{lemma}\label{Dense1}
The functions $\{K_w \mid w \in \D\}$ span the entire space $\hil$.
\end{lemma}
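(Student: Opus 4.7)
The plan is to show this by the standard orthogonal complement argument, adapted to the quaternionic setting. Let $M$ denote the closed right $\H$-linear subspace of $\hil$ generated by the family $\{K_w : w \in \D\}$. The goal is to prove $M = \hil$, and the usual route is to show $M^\perp = \{0\}$ and then invoke the orthogonal decomposition $\hil = M \oplus M^\perp$, which holds on a quaternionic Hilbert space exactly as in the complex case.

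First I would fix an arbitrary $f \in M^\perp$. By definition of the orthogonal complement and by right linearity of the scalar product in the second argument, $f$ is perpendicular to every $K_w$, that is
\[
\langle K_w, f\rangle = 0, \quad \forall w \in \D.
\]
Now I would apply the reproducing kernel identity $f(w) = \langle K_w, f\rangle$, which is the defining property of $K_w$. This immediately gives $f(w) = 0$ for every $w \in \D$, hence $f \equiv 0$ as an element of $\hil$.

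Finally, since $M$ is a closed right $\H$-linear subspace of the quaternionic Hilbert space $\hil$, we have the orthogonal decomposition $\hil = M \oplus M^\perp$; having shown $M^\perp = \{0\}$, we conclude $M = \hil$, which is precisely the statement that $\{K_w : w \in \D\}$ spans $\hil$.

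The only delicate point is being careful with the order of the arguments in $\langle\cdot,\cdot\rangle$: since the scalar product here is right-linear in the second slot and conjugate-linear in the first, the reproducing identity $f(w) = \langle K_w, f\rangle$ (rather than $\langle f, K_w\rangle$) is the one that must be used. Apart from this bookkeeping, no real obstacle arises: the argument is identical to the complex case, and the orthogonal decomposition theorem on a quaternionic Hilbert space has exactly the same proof as on a complex one.
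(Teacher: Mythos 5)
Your argument is correct and coincides with the paper's own proof: both show that any $f$ orthogonal to every $K_w$ satisfies $f(w)=\langle K_w,f\rangle=0$ for all $w\in\D$, hence $f=0$, and then conclude by the orthogonal decomposition of the quaternionic Hilbert space. Your additional remarks on the right-linearity conventions and the order of arguments in the scalar product are accurate but do not change the substance.
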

\begin{proof}
This follows immediately from the reproducing property. If $f \perp K_w$ for all $w\in\D$, then
\begin{align*}
f(w) = \inpr{K_w}{f} = 0
\end{align*}
and thus $f=0$.

\end{proof}
Since we consider slice hyperholomorphic functions, we can use their specific structure to prove a stronger result.
\begin{lemma}\label{Dense2}
Let $i \in \S$ and set $\D_i := \D \cap \C_i$. The set $\{K_\omega : \omega \in \D_i\}$ spans the space $\hil$.
\end{lemma}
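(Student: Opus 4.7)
The plan is to mimic the proof of the previous lemma but exploit the extra rigidity that slice hyperholomorphicity provides: although a general element of a reproducing kernel Hilbert space is determined by its values on all of $\D$, a slice hyperholomorphic function is already determined by its restriction to any slice $\D_i$.

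Concretely, I will show that the orthogonal complement of $\{K_\omega : \omega \in \D_i\}$ in $\hil$ is trivial. So suppose $f \in \hil$ satisfies $\langle K_\omega, f\rangle = 0$ for every $\omega \in \D_i$. By the reproducing property this says exactly that $f(\omega) = 0$ for every $\omega \in \D_i$, i.e.\ the restriction $f_i = f|_{\D_i}$ is identically zero. Since $\D$ is axially symmetric and $\D_i = \D \cap \C_i$ is symmetric with respect to the real axis, $\overline{\omega} \in \D_i$ whenever $\omega \in \D_i$, so $f$ also vanishes at the conjugates of points of $\D_i$.

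Now I apply the Representation Formula (\Cref{RepFo}). For any $q = q_0 + i_q q_1 \in \D$, set $q_i := q_0 + i q_1 \in \D_i$. Since $f \in \lhol(\D)$, we have
\[
f(q) = \tfrac{1}{2}(1 - i_q i) f(q_i) + \tfrac{1}{2}(1 + i_q i) f(\overline{q_i}),
\]
and both $f(q_i)$ and $f(\overline{q_i})$ vanish by the previous step. Hence $f(q) = 0$ for every $q \in \D$, so $f = 0$ in $\hil$.

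This shows $\{K_\omega : \omega \in \D_i\}^{\perp} = \{0\}$, which is exactly the statement that the closed linear span of $\{K_\omega : \omega \in \D_i\}$ is all of $\hil$. The only substantive point is the passage from vanishing on the slice $\D_i$ to vanishing on all of $\D$; the Representation Formula handles this immediately, so I do not anticipate a real obstacle — the proof is essentially the same as \Cref{Dense1} with one extra invocation of the slice structure.
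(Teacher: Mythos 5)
Your proof is correct and follows exactly the paper's argument: the reproducing property forces $f$ to vanish on $\D_i$, and the Representation Formula (\Cref{RepFo}) then yields $f\equiv 0$ on all of $\D$. You merely spell out the conjugate-symmetry of $\D_i$ and the extension step in more detail than the paper does.
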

\begin{proof}
If $f\perp K_w$ for any $w\in \D_i$, then $f(w) = \inpr{K_w}{f} = 0$ for all $w\in \D_i$. The representation formula, \Cref{RepFo}, then implies $f \equiv 0 $, and hence $\hil = \overline{\linspan\{K_w: w\in\D_i\}}$.

\end{proof}

The kernel $K(q,w)$ is obviously left slice hyperholomorphic in~$q$. Furthermore, it is right slice hyperholomorphic in  $\overline{w}$, i.e. the mapping $w \mapsto K(q,\overline{w})$ is right slice hyperholomorphic. This follows from \Cref{IDeriv} because
\[
\begin{split}
K(q,\overline{w})\rbpartial{iw} &= \frac12 \langle K_q, K_{\overline{w}}\rangle (\partial_{w_0}+i_{w}\partial_{w_1})
\\
&
= \frac12 \overline{(\partial_{w_0}-i_{w}\partial_{w_1})\langle K_{\overline{w}},K_q\rangle}
\\
&
= \frac12\overline{ (\partial_{w_0}+i_{\overline{w}}\partial_{w_1}) K_q(\overline{w})} = 0.
\end{split}
\]
Pointing out that
\begin{align*}
\|K_q\|^2 = K(q,q) \qquad\text{and}\qquad |K(q,w)|^2\leq K(q,q)K(w,w),
\end{align*}
we can conclude that, if for each $q \in \D$ there exists $f \in \hil$ such that $f(q) \neq 0$, then
\begin{align*}
K(q,q)>0 \quad\forall q \in \D.
\end{align*}
We shall assume this to be true in the following. We can then normalize the reproducing kernels to obtain a family of unit vectors $k_q$ by
\begin{align*}
k_q(w) = \frac{K(w,q)}{\sqrt{K(q,q)}}\qquad\text{for } w \in \D.
\end{align*}
We call these the normalized reproducing kernels of $\hil$.

For the following discussion, we fix an imaginary unit $\bi\in\S$. Furthermore, we assume that there exists a unitary and antiselfadjoint operator $J$ such that $\hil_{+}^{J\bi} = \overline{\linspan_{\C_{\bi}}\{K_q, q\in\D_{\bi}\}}$.
\begin{definition}
Let $T$ be a bounded linear operator on $\hil$ such that $[T,J]=0$. The function
\begin{align*}
\widetilde{T}(q) = \inpr{k_q}{Tk_q}, \quad q\in\D_{\bi}
\end{align*}
is called the Berezin transform of $T$.
\end{definition}

\begin{proposition}
The Berezin transform has the following properties:
\begin{enumerate}[(i)]
\item If $T$ is self-adjoint, then $\widetilde{T}$ is real-valued.
\item If $T$ is positive, then $\widetilde{T}$ is non-negative.
\item We have $\widetilde{T^*} = \overline{\widetilde{T}}$.
\item The mapping $T\mapsto \widetilde{T}$ is a contractive $\C_{\bi}$-linear mapping from $\B_J(\hil)$ into $L^{\infty}(\D_{\bi},\H)$.
\end{enumerate}
\end{proposition}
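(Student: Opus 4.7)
The plan is to obtain (i)--(iii) as almost-immediate consequences of a single identity, and to reserve the bulk of the argument for (iv), whose crux is the preliminary observation that $\widetilde{T}$ takes values in $\C_\bi$.

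For (iii), I would use the defining property of the adjoint together with the quaternionic hermiticity of the scalar product to compute
\[\widetilde{T^*}(q) = \langle k_q, T^*k_q\rangle = \langle Tk_q, k_q\rangle = \overline{\langle k_q, Tk_q\rangle} = \overline{\widetilde{T}(q)}.\]
Item (i) then drops out: for self-adjoint $T$ one has $\widetilde{T} = \overline{\widetilde{T}}$, forcing real values. Item (ii) is direct from the definition of positivity, since $\widetilde{T}(q) = \langle k_q, Tk_q\rangle \geq 0$ whenever $T\geq 0$. No further machinery is needed for these three.

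The heart of the argument is (iv). The key preliminary step is to verify that $\widetilde{T}$ is actually $\C_\bi$-valued on $\D_\bi$. Since $\hil_+^{J\bi} = \overline{\linspan_{\C_\bi}\{K_w: w\in\D_\bi\}}$, we have $K_q \in \hil_+^{J\bi}$ and hence $k_q\in \hil_+^{J\bi}$ for every $q\in\D_\bi$; in particular $Jk_q = k_q\bi$. Combining $[T,J] = 0$ with the right $\H$-linearity of $T$, I then compute $J(Tk_q) = T(Jk_q) = T(k_q\bi) = (Tk_q)\bi$, so $Tk_q \in \hil_+^{J\bi}$ as well. Because the restriction of the scalar product to $\hil_+^{J\bi}\times\hil_+^{J\bi}$ takes values in $\C_\bi$, we conclude $\widetilde{T}(q)\in \C_\bi$.

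With this in hand, additivity of $T\mapsto \widetilde{T}$ is immediate from right-linearity of the scalar product. For scalar linearity I would write $a = a_0 + \bi a_1 \in \C_\bi$ and recall that the left $\C_\bi$-multiplication on $\B_J(\hil)$ is $aT = a_0 T + a_1 JT$; expanding gives
\[\widetilde{aT}(q) = a_0\widetilde{T}(q) + a_1\langle k_q, JTk_q\rangle = a_0\widetilde{T}(q) + a_1\widetilde{T}(q)\bi,\]
and since $\widetilde{T}(q)\in\C_\bi$ commutes with $\bi$ the right-hand side equals $a\widetilde{T}(q)$. Contractivity $\|\widetilde{T}\|_\infty \leq \|T\|$ is then a one-line Cauchy--Schwarz estimate using $\|k_q\| = 1$. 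The only (mild) obstacle is recognizing that $\C_\bi$-linearity cannot be verified from the definition without first checking that $\widetilde{T}$ is $\C_\bi$-valued; once this is done, the algebra collapses.
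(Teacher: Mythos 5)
Your proposal is correct and follows essentially the same route as the paper: the identity $\widetilde{T^*}=\overline{\widetilde{T}}$ yields (i) and (iii), positivity gives (ii) directly, and (iv) rests on first observing that $Tk_q\in\hil_+^{J\bi}$ (so $\widetilde{T}$ is $\C_{\bi}$-valued), then checking the action of $J$ and applying Cauchy--Schwarz. Your explicit justification that $k_q\in\hil_+^{J\bi}$ via the standing assumption $\hil_+^{J\bi}=\overline{\linspan_{\C_{\bi}}\{K_q:q\in\D_{\bi}\}}$ is a slight elaboration of a step the paper takes for granted, but the argument is the same.
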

\begin{proof}
For $q\in\D_{\bi}$, it is
\[ \widetilde{T^*}(q) = \langle k_q,T^* k_q\rangle = \langle Tk_q, k_q\rangle = \overline{\langle k_q, Tk_q\rangle} = \overline{\widetilde{T}(q)}\]
and hence (iii) holds. If $T$ is self-adjoint, this implies $\overline{\widetilde{T}}(q) = \widetilde{T}(q)$ and so $\widetilde{T}(q)$ is real. For positive $T$, the definition of positivity immediately implies $(ii)$.

 The Berezin transform is obviously $\R$-linear. Since $T\in\boundOP_J(\hil)$, it maps $k_q\in\hil_+^{J\bi}$ to an element in $\hil_+^{J\bi}$, and hence its Berezin transform $\widetilde{T}$ takes values in $\C_{\bi}$. For $\bi T = JT$, we have
 \[ \widetilde{\bi T}(q) = \langle k_q, JT k_q\rangle = \langle k_q, T k_q \bi\rangle  =\widetilde{T}(q)\bi = \bi \widetilde{T}(q).  \]
Thus, the Berezin transform is even $\C_{\bi}$-linear.

 Finally, we deduce from the Cauchy-Schwarz-inequality that
\begin{align*}
\widetilde{T}(q) = \langle k_q, Tk_q\rangle \leq \|k_q\|\|Tk_q\| \leq \|T\| \|k_q\|^2 = \|T\|.
\end{align*}
Hence, $\widetilde{T}\in L^{\infty}(\D)$ with $\|\widetilde{T}\|_{\infty}\leq \|T\|$.

\end{proof}

Observe that the Berezin Transform of $T\in\boundOP_J(\hil)$ coincides with the classical Berezin transform of the operator $\res{Ji}(T)\in\boundOP_J(\hil)$. Since the restriction operator $\res{Ji}$ and the classical Berezin transform are injective (cf. \cite[Proposition~6.2.]{kehe}), we immediately obtain the following Lemma.
\begin{lemma}
The Berezin-transform is one-to-one.
\end{lemma}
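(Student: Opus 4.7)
The plan is to reduce the injectivity question to the already-known classical fact that the complex Berezin transform on a reproducing kernel Hilbert space is one-to-one (cited in the excerpt as \cite[Proposition~6.2.]{kehe}). The translation between the quaternionic and complex settings is provided by the restriction/lift isomorphism $\res{J\bi}$ from \Cref{OPExtension} and \Cref{BJIso}, together with the standing assumption that $\hil_+^{J\bi} = \overline{\linspan_{\C_{\bi}}\{K_q : q\in\D_{\bi}\}}$.

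First, I would observe that for every $q\in\D_{\bi}$ the kernel $K_q$ lies in $\hil_+^{J\bi}$, hence so does the normalized kernel $k_q$. Consequently, if $T\in\boundOP_J(\hil)$, then $Tk_q\in\hil_+^{J\bi}$ as well, since $T$ commutes with $J$. Because the scalar product on $\hil_+^{J\bi}$ is simply the restriction of the one on $\hil$, the quaternionic Berezin transform
\[
\widetilde{T}(q) = \langle k_q, T k_q\rangle,\quad q\in\D_{\bi},
\]
takes values in $\C_{\bi}$ and coincides with the classical $\C_{\bi}$-valued Berezin transform of $\res{J\bi}(T)$ computed on the complex reproducing kernel Hilbert space $\hil_+^{J\bi}$, whose reproducing kernels are precisely the $K_q$ for $q\in\D_{\bi}$.

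Second, assume $\widetilde{T}\equiv 0$ on $\D_{\bi}$. By the previous step, the classical Berezin transform of $\res{J\bi}(T)\in\boundOP(\hil_+^{J\bi})$ vanishes identically. Since the classical Berezin transform is one-to-one \cite[Proposition~6.2.]{kehe}, this forces $\res{J\bi}(T)=0$. By \Cref{OPExtension}(a), or equivalently by the fact that $\res{J\bi}$ is an isometric isomorphism (\Cref{BJIso}), we conclude $T=0$. This proves injectivity.

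The only step that requires care is verifying that the quaternionic Berezin transform agrees with the classical one of the restricted operator, which hinges on the assumption $\hil_+^{J\bi}=\overline{\linspan_{\C_{\bi}}\{K_q: q\in\D_{\bi}\}}$ ensuring that $\hil_+^{J\bi}$ is itself a $\C_{\bi}$-reproducing kernel Hilbert space with the $K_q$'s (for $q\in\D_{\bi}$) as reproducing kernels. Once this identification is in place, the rest is a direct transfer of the complex-analytic result and presents no further obstacle.
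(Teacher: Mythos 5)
Your proof is correct and follows exactly the paper's own argument: the paper likewise observes that the quaternionic Berezin transform of $T$ coincides with the classical Berezin transform of $\res{J\bi}(T)$ on $\hil_+^{J\bi}$ and then invokes the injectivity of both the classical Berezin transform and the restriction map. Your additional care in verifying that $\hil_+^{J\bi}$ is a reproducing kernel Hilbert space with kernels $K_q$, $q\in\D_{\bi}$, only makes explicit what the paper leaves implicit.
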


\section{The Case of Weighted Bergman Spaces}

In this section we consider the special case of weighted Bergman spaces on the unit ball. A first study of these spaces in the slice hyperholomorphic setting has been done in \cite{BBD}. We recall the main definitions and results.
\begin{definition}
Let $i\in\S$ and let $dm_i$ be the Lebesgue measure on the complex plane $\C_i$. For $\alpha >-1$, we define the measure $dA_{\alpha,i}(z)$ on the unit ball $\D_i := \D\cap\C_i$ in $\C_i$ by
\[
dA_{\alpha,i}(z) = \frac{\alpha+1}{\pi}(1-|z|^2)^{\alpha}\,dm_i(z).
\]
For $p>0$, the weighted slice Bergman space $\berg_{\alpha,i}^p(\D)$ is the quaternionic right vector space of all left slice hyperholomorphic functions $f$ on $\D$ such that
\[\int_{\D_i} |f(z)|^p\,dA_{\alpha,i}(z) < + \infty.\]
For $f\in \berg_{\alpha,i}^p(\D)$, we define
\[ \|f\| _{p,\alpha,i} := \left(\int_{\D_i}|f(z)|^p\,dA_{\alpha,i}(z)\right)^{\frac{1}{p}}.\]
\end{definition}

\begin{corollary}\label{BergSplit}
Let $i,j\in\S$ with $i\perp j$, let $f\in\lhol(\D)$ and write $f_i = f_1 + f_2j$ with holomorphic functions $f_1,f_2:\D_i\to\C_i$, cf. \Cref{SplitLem}. Then $f\in\berg_{\alpha,i}^p(\D)$ if and only if $f_1$ and $f_2$ belong to the complex Bergman space $\berg_{\C,\alpha}^{p}(\D)$, i.e. the space of all holomorphic functions $g$ on $\D_i$ such that $\int_{\D_i} |g(z)|^p\, dA_{\alpha,i}(z)<+\infty$.
\end{corollary}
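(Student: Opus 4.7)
The proof reduces to a pointwise identity: for each $z\in\D_i$,
\[
|f_i(z)|^2 = |f_1(z)|^2 + |f_2(z)|^2.
\]
Since $f_1(z),f_2(z)\in\C_i$, it suffices to check that $|a+bj|^2 = |a|^2+|b|^2$ for any $a,b\in\C_i$. A direct calculation using $\overline{a+bj} = \bar a - j\bar b$ together with the relation $aj = j\bar a$ (valid for $a\in\C_i$, since $j$ anticommutes with $i$) gives $|a+bj|^2 = |a|^2 + |b|^2 + j(\bar b\bar a - \bar a\bar b) = |a|^2+|b|^2$, because $\bar a,\bar b\in\C_i$ commute.

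From this identity, I obtain the elementary two-sided estimate
\[
\max\bigl(|f_1(z)|^p,|f_2(z)|^p\bigr) \;\leq\; |f_i(z)|^p \;\leq\; C_p\bigl(|f_1(z)|^p + |f_2(z)|^p\bigr),
\]
for some constant $C_p>0$ depending only on $p$. The left inequality is immediate from $|f_k|^2\leq |f_1|^2+|f_2|^2 = |f_i|^2$. For the right inequality, I use the elementary fact that $(s+t)^{p/2}\leq C_p(s^{p/2}+t^{p/2})$ for $s,t\geq 0$: if $0<p\leq 2$, subadditivity of $x\mapsto x^{p/2}$ on $[0,\infty)$ gives $C_p=1$, while for $p\geq 2$ convexity of $x\mapsto x^{p/2}$ gives $C_p = 2^{p/2-1}$.

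Integrating this pointwise inequality against the measure $dA_{\alpha,i}$ over $\D_i$ yields
\[
\max\bigl(\|f_1\|_{p,\alpha,i}^p, \|f_2\|_{p,\alpha,i}^p\bigr) \;\leq\; \|f\|_{p,\alpha,i}^p \;\leq\; C_p\bigl(\|f_1\|_{p,\alpha,i}^p + \|f_2\|_{p,\alpha,i}^p\bigr),
\]
which establishes the claimed equivalence. Note that $f_1$ and $f_2$ are automatically holomorphic on $\D_i$ by the Splitting Lemma (\Cref{SplitLem}), so membership in $\berg_{\C,\alpha}^p(\D)$ only requires the integrability condition.

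There is no substantial obstacle: the entire argument is the Pythagorean identity $|f_i|^2 = |f_1|^2+|f_2|^2$ together with an elementary scalar inequality. The only point requiring mild care is verifying the identity $aj = j\bar a$ for $a\in\C_i$, which reflects the fact that $\C_i$ and $\C_i j$ are orthogonal real subspaces of $\H$; everything else is routine.
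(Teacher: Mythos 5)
Your proof is correct, and it is exactly the argument the paper has in mind (the corollary is stated without proof, as an immediate consequence of the Splitting Lemma): the pointwise Pythagorean identity $|f_1+f_2j|^2=|f_1|^2+|f_2|^2$ for $\C_i$-valued components with $j\perp i$, followed by the elementary power inequality and integration against $dA_{\alpha,i}$. Your verification of the identity via $aj=j\bar a$ and the two-sided estimate with $C_p=1$ for $p\leq 2$ and $C_p=2^{p/2-1}$ for $p\geq 2$ are both accurate, so nothing is missing.
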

\begin{lemma}\label{BergIJ}
Let $\alpha >-1$, $p>0$ and $i,j\in\S$. A left slice hyperholomorphic function $f$ belongs to $\berg_{\alpha,i}^p(\D)$ if and only if it belongs to $\berg_{\alpha,j}^p(\D)$. Moreover,
\[ \|f\|^p_{p,\alpha,i} \leq 2^{\max\{p,1\}}  \|f\|^p_{p,\alpha,j} \leq  2^{2 \max\{p,1\}} \|f\|^p_{p,\alpha,i}.\]
\end{lemma}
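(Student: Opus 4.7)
The plan is to deduce the bound from the representation formula (\Cref{RepFo}), which expresses the values of a left slice hyperholomorphic function on one slice in terms of its values on any other slice. Fix $i,j\in\S$ and let $z=z_0+iz_1\in\D_i$. Writing $z_j:=z_0+jz_1$ and $\overline{z_j}=z_0-jz_1$, the representation formula gives
\[
f(z) = \tfrac12(1-ij)f(z_j) + \tfrac12(1+ij)f(\overline{z_j}).
\]
Since $|1\pm ij|\le 2$, this yields the pointwise estimate $|f(z)|\le |f(z_j)|+|f(\overline{z_j})|$.

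Next I would pass to $p$-th powers using the standard convexity bound $(a+b)^p\le 2^{\max\{p,1\}-1}(a^p+b^p)$, which is just $(a+b)^p\le a^p+b^p$ for $0<p\le 1$ and the usual Jensen-type inequality for $p\ge 1$. This gives
\[
|f(z)|^p \;\le\; 2^{\max\{p,1\}-1}\bigl(|f(z_j)|^p + |f(\overline{z_j})|^p\bigr).
\]

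Then I would integrate against $dA_{\alpha,i}$ and perform a change of variables. The real-affine maps $\phi_{\pm}:\C_i\to\C_j$, $z_0+iz_1\mapsto z_0\pm jz_1$ are isometries of the underlying $\R^2$'s; they preserve Lebesgue measure and the modulus $|z|^2=z_0^2+z_1^2$, so they transport $dA_{\alpha,i}$ onto $dA_{\alpha,j}$ and $\D_i$ onto $\D_j$. Therefore
\[
\int_{\D_i}|f(z_j)|^p\,dA_{\alpha,i}(z) = \int_{\D_j}|f(w)|^p\,dA_{\alpha,j}(w) = \|f\|^p_{p,\alpha,j},
\]
and analogously for the $\overline{z_j}$ term. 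Combining gives
\[
\|f\|^p_{p,\alpha,i} \;\le\; 2^{\max\{p,1\}-1}\cdot 2\,\|f\|^p_{p,\alpha,j} \;=\; 2^{\max\{p,1\}}\|f\|^p_{p,\alpha,j},
\]
which is the first inequality; in particular $f\in\berg^p_{\alpha,j}(\D)$ implies $f\in\berg^p_{\alpha,i}(\D)$. The second inequality is obtained by exchanging the roles of $i$ and $j$ in the same argument, since the representation formula is symmetric in the choice of imaginary unit.

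The proof is essentially bookkeeping; the only real subtlety is tracking the optimal constant in the scalar inequality $(a+b)^p\le C(a^p+b^p)$ so that both regimes $p<1$ and $p\ge 1$ are captured by the uniform exponent $\max\{p,1\}-1$, and verifying that the measures $dA_{\alpha,i}$ and $dA_{\alpha,j}$ correspond under $\phi_\pm$ (which is immediate once one notes that $(1-|z|^2)^\alpha$ and Lebesgue measure depend only on $z_0^2+z_1^2$ and $dz_0\,dz_1$).
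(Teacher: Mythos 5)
Your proof is correct: the pointwise bound from the Representation Formula (\Cref{RepFo}), the convexity inequality $(a+b)^p\le 2^{\max\{p,1\}-1}(a^p+b^p)$, and the measure-preserving identification of $\D_i$ with $\D_j$ together give exactly the constant $2^{\max\{p,1\}}$, and the second inequality follows by symmetry as you say. The paper itself states this lemma without proof (it is recalled from the reference on Bloch, Besov and Dirichlet spaces), and your argument is precisely the standard one that the stated constant points to, so there is nothing to add beyond the minor remark that for $z_1<0$ one should read the representation formula with $i_z=-i$, which leaves your pointwise estimate unchanged.
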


\begin{definition}
For $\alpha>-1$ and $p>0$, we define the weighted slice hyperholomorphic Bergman space $\berg_{\alpha
}^p(\D)$ as the space of all left slice hyperholomorphic functions $f$ such that
\[\|f\|_{p,\alpha} := \sup_{i\in\S}\| f\|_{p,\alpha,i}<+\infty.\]
\end{definition}
\begin{remark}
Observe that \Cref{BergSplit} implies that, for each $i\in\S$, the spaces $\berg_{\alpha}^p(\D)$ and $\berg_{\alpha, i}^p(\D)$ contain the same elements and that their norms are equivalent.
\end{remark}

\subsection{Further properties of the weighted slice Bergman space $\berg_{\alpha,i}^2(\D)$}
As in the complex space, the norm on the slice Bergman space $\berg_{\alpha,i}^2(\D)$ is generated by the scalar product
\begin{equation}\label{BergScal}
\langle f,g\rangle_{2,\alpha,i} := \int_{\D_i} \overline{f(z)}g(z)\, dA_{\alpha,i}(z).
\end{equation}
This is however not true for the slice hyperholomorphic Bergman space $\berg_{\alpha}^2(\D)$: orthogonality and other concepts related to the scalar product will always depend on the complex plane chosen to define the scalar product.  Nevertheless, by \Cref{BergIJ}, independently of the choice of the complex plane, the norm topology on $\berg_{\alpha}^p(\D)$ is generated by the chosen scalar product.

\begin{lemma} Endowed with the scalar product defined in \eqref{BergScal}, the slice Bergman space $\berg_{\alpha,i}^2(\D)$ turns into a  reproducing kernel quaternionic Hilbert space.
\end{lemma}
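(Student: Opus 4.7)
The plan is to reduce the problem to the classical complex weighted Bergman space via the Splitting Lemma. First I would verify that $\langle\cdot,\cdot\rangle_{2,\alpha,i}$ satisfies the three axioms of a quaternionic scalar product. Positivity holds since $\langle f,f\rangle_{2,\alpha,i} = \int_{\D_i}|f(z)|^2\,dA_{\alpha,i}(z) \geq 0$, and equality forces $f\equiv 0$ on $\D_i$ by continuity, hence on all of $\D$ by the Representation Formula (\Cref{RepFo}). Right-linearity in the second slot follows by pulling the right scalar out of the integral, and quaternionic hermiticity reduces to the pointwise identity $\overline{\overline{g(z)}f(z)} = \overline{f(z)}g(z)$. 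By construction the induced norm coincides with $\|\cdot\|_{2,\alpha,i}$.

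Next I would establish completeness. Given a Cauchy sequence $(f_n)_{n\in\N}$ in $\berg_{\alpha,i}^2(\D)$, I fix $j\in\S$ with $i\perp j$ and apply the Splitting Lemma (\Cref{SplitLem}) to write $f_n|_{\D_i} = f_{n,1} + f_{n,2}j$ with holomorphic components $f_{n,1}, f_{n,2}:\D_i\to\C_i$. \Cref{BergSplit} then implies that $(f_{n,1})$ and $(f_{n,2})$ are Cauchy in the classical complex weighted Bergman space, so they converge to holomorphic limits $g_1, g_2$. Setting $g := g_1 + g_2 j$ on $\D_i$, the function $g$ satisfies the Cauchy-Riemann-type equation $\partial_{x_0} g + i\partial_{x_1} g = 0$, so by \Cref{extLem} it admits a unique left slice hyperholomorphic extension $f := \ext_L(g)$ to $[\D_i] = \D$. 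A second application of \Cref{BergSplit} identifies $f$ as the norm limit of $(f_n)$ in $\berg_{\alpha,i}^2(\D)$.

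For the reproducing kernel property, I would show that point evaluation at each $w\in\D$ is a bounded right $\H$-linear functional. For $w\in\D_i$, I split $f|_{\D_i} = f_1 + f_2 j$ and use the reproducing property of the complex weighted Bergman space on each component, combined with the norm equivalence from \Cref{BergSplit}, to obtain a bound of the form $|f(w)| \leq C_w\|f\|_{2,\alpha,i}$. For a general $w\in\D$, the Representation Formula (\Cref{RepFo}) expresses $f(w)$ as a quaternionic combination of $f(w_i)$ and $f(\overline{w_i})$ with $w_i,\overline{w_i}\in\D_i$, which reduces this case to the previous one. An application of the quaternionic Riesz representation theorem then produces a unique $K_w\in\berg_{\alpha,i}^2(\D)$ with $f(w) = \langle K_w, f\rangle_{2,\alpha,i}$ for every $f$, and this $K$ is the reproducing kernel.

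The main obstacle I expect is the completeness argument, specifically confirming that the function obtained by gluing $g_1$ and $g_2$ on $\D_i$ via $j$ and then extending by \Cref{extLem} is genuinely the norm limit of $(f_n)$ as an element of $\berg_{\alpha,i}^2(\D)$ (rather than merely on $\D_i$). This rests cleanly on the norm equivalence in \Cref{BergSplit} and the fact, already recorded in the excerpt, that $\berg_{\alpha,i}^p(\D)$ and $\berg_{\alpha,j}^p(\D)$ contain the same functions with comparable norms.
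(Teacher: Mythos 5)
Your proposal is correct, and its core step---bounding point evaluations by splitting $f$ into two holomorphic components via the Splitting Lemma and invoking the continuity of point evaluations on the classical complex weighted Bergman space---is exactly the paper's argument. The paper's proof consists of only that step: it takes the inner-product axioms and completeness for granted (these are part of the earlier study of these spaces in the cited reference on Bloch, Besov and Dirichlet spaces), whereas you verify them explicitly; your completeness argument via \Cref{SplitLem}, \Cref{BergSplit} and \Cref{extLem} is sound and fills a gap the paper leaves implicit. The one genuine (minor) divergence is in how a general point $w\in\D$ is handled: the paper splits $f$ in the plane $\C_{i_w}$ containing $w$, so that $w$ is always a point of the splitting plane, and then transfers the resulting bound back to the fixed unit $i$ using the equivalence of slice norms in \Cref{BergIJ}; you instead split only in the fixed plane $\C_i$ and reach general $w$ through the Representation Formula \Cref{RepFo}. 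Both routes are valid and yield the same constant up to the equivalence factors; yours has the small advantage of never leaving the distinguished plane $\C_i$, while the paper's avoids invoking \Cref{RepFo} at this stage at the cost of using \Cref{BergIJ}.
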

\begin{proof}
If $f\in\berg_{\alpha,i}^2(\D)$ and $w\in \D$, then we can chose $j\in\S$ with $j\perp i_w$ and write the restriction of $f$ to the plane $\C_{i_w}$ as $f_{i_w} = f_1 + f_2 j$ with holomorphic functions $f_1,f_2: \D_{i_w}\to \C_{i_w}$. By \Cref{BergSplit}, the functions $f_1$ and $f_2$ belong to the complex Bergman space $\berg_{\C,\alpha}^2(\D_i)$. Since point evaluations are continuous functionals on $\berg_{\C,\alpha}^2(\D_i)$, cf. \cite[Theorem~4.14]{kehe}, we have
\begin{gather*}
|f(w)| \leq |f_1(w)| + |f_2(w)| \\
\leq C(\| f_1\|_{\C,2,\alpha} + \| f_2\|_{\C,2,\alpha})
\\
 \leq 2C \|f\|_{2,\alpha,i_{\omega}}
  \leq \widetilde{C} \|f\|_{2,\alpha,i},
\end{gather*}
where the last equality follows from the equivalence of the Bergman slice norms, cf. \Cref{BergIJ}. Hence, point evaluations are continuous linear functionals on~$\berg_{\alpha,i}^2(\D)$.

\end{proof}

\begin{definition}
Let $\alpha > -1$. We define the slice hyperholomorphic $\alpha$-Bergman kernel for $q,w\in\D$ as
\[
K_{\alpha}(q,w):=\frac12(1-i_qi)\frac{1}{(1 - q_{i_w}\overline{w})^{2+\alpha}} + \frac12(1+i_qi)\frac{1}{(1-\overline{q_{i_w}}\,\overline{w})^{2+\alpha}}.
\]
\end{definition}
\begin{remark}\label{BKerRes}
Observe that $K_{\alpha}(q,w)$ is  an extension of the complex Bergman kernel
\[K_{\C,\alpha}(z,w) = \frac{1}{(1-z\overline{w})^{2+\alpha}}.\]
Whenever $q$ and $w$ belong to the same complex plane, $K_{\alpha}(q,w) = K_{\C,\alpha}(q,w)$.
  Moreover, by a direct computation it is easy to see that
  the kernel $K_{\alpha}(x,w)$ is left slice hyperholomophic in $q$ and right slice hyperholomorphic in $\overline{w}$.
\end{remark}

\begin{lemma}
The function $K_{\alpha}(\cdot,\cdot)$ is the reproducing kernel of $\berg_{\alpha,i}^2(\D)$.
\end{lemma}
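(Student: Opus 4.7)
The reproducing property we need is
\[
f(w) \;=\; \int_{\D_i} \overline{K_\alpha(z,w)}\,f(z)\,dA_{\alpha,i}(z) \qquad\text{for all } f \in \berg_{\alpha,i}^2(\D),\, w \in \D.
\]
My plan is to first verify it for $w \in \D_i$, where everything collapses to the classical complex Bergman reproducing property, and then to propagate it to all of $\D$ via the identity principle for slice hyperholomorphic functions.

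For $w \in \D_i$ one has $i_w = i$, and for any $z \in \D_i$ either $i_z = i$ or $z \in \R$; in either case the defining formula of $K_\alpha$ collapses to the classical complex $\alpha$-Bergman kernel $K_\alpha(z,w) = (1-z\bar w)^{-(2+\alpha)}$. Write $f|_{\D_i} = f_1 + f_2 j$ via the Splitting Lemma (\Cref{SplitLem}) with $j \in \S$, $j \perp i$, so that $f_1, f_2 \in \berg_{\C,\alpha}^2(\D_i)$ by \Cref{BergSplit}. Since the kernel and the $\C_i$-valued components all commute, the integral splits into two classical complex Bergman reproducing integrals whose sum equals $f_1(w) + f_2(w) j = f(w)$.

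For general $w \in \D$, set $F(w) := \int_{\D_i} \overline{K_\alpha(z,w)}\,f(z)\,dA_{\alpha,i}(z)$. By the previous step, $F$ agrees with $f$ on $\D_i$. Both are left slice hyperholomorphic on the axially symmetric slice domain $\D$: $f$ by hypothesis, and $F$ because $K_\alpha$ is anti right slice hyperholomorphic in $w$ (\Cref{BKerRes}), a dependence which after quaternionic conjugation translates into the left-slice-hyperholomorphic dependence of $\overline{K_\alpha(z,w)}$ on $w$, and this in turn survives integration against $f(z)$ over the slice $\D_i$. The identity principle for slice hyperholomorphic functions then forces $F \equiv f$ on all of $\D$, yielding the reproducing identity.

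The main obstacle will be the slice-hyperholomorphic dependence of $F$ on $w$: one must translate the ``anti right slice hyperholomorphic'' property of $K_\alpha$ into a clean condition on $\overline{K_\alpha(z,w)}$ and then verify it passes through the integration over $z$. A more computational backup strategy is to apply the Representation Formula (\Cref{RepFo}) to $f$ at $w$, substitute the first-step reproducing integrals for $f(w_i)$ and $f(\overline{w_i})$ with $w_i = w_0 + iw_1,\, \overline{w_i} = w_0 - iw_1 \in \D_i$, and then reconcile the resulting integrand
\[
\tfrac12(1-i_wi)(1-\bar z w_i)^{-(2+\alpha)} + \tfrac12(1+i_wi)(1-\bar z \overline{w_i})^{-(2+\alpha)}
\]
with $\overline{K_\alpha(z,w)}$. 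This direct route is more delicate, since the quaternionic prefactors $\tfrac12(1 \mp i_w i)$ do not commute with the $\C_{i_w}$-valued powers appearing in $K_\alpha$, so the intertwining between the complex structures of $\C_i$ and $\C_{i_w}$ has to be tracked term by term.
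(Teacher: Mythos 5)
Your first step coincides with the paper's: for $w\in\D_i$ the kernel restricts to the complex Bergman kernel, and the Splitting Lemma together with \Cref{BergSplit} reduces the pairing to two classical reproducing integrals. For the extension to $w\notin\D_i$ you diverge: you propose to fix $z$-integration first, obtaining $F(w)=\int_{\D_i}\overline{K_\alpha(z,w)}f(z)\,dA_{\alpha,i}(z)$, show $F$ is left slice hyperholomorphic in $w$, and conclude from agreement on $\D_i$. The paper instead applies the Representation Formula to the kernel itself in the variable $\overline{w}$, writing $K_w = K_{w_i}(1-ii_w)\tfrac12 + K_{\overline{w_i}}(1+ii_w)\tfrac12$ with $w_i,\overline{w_i}\in\D_i$, and then pulls the constant quaternionic factors out of the (conjugate-linear in the first slot) scalar product, matching the result against the Representation Formula for $f$. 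Both routes rest on the same principle — a slice hyperholomorphic function is determined by its restriction to one complex plane — but the paper's version is doing the work at the level of the kernel rather than of the integral.

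Two points in your plan are left open, and the paper's route closes both for free. First, you acknowledge but do not resolve the slice hyperholomorphy of $F$ in $w$; this requires both the observation that $w\mapsto\overline{K_\alpha(z,w)}$ is \emph{left} slice hyperholomorphic (which is true: conjugating a function that is right slice hyperholomorphic in $\overline{w}$ produces a left slice hyperholomorphic function of $w$) and a justification that this property survives integration over $z$, i.e.\ differentiation under the integral sign. Second, the statement that $K_\alpha$ is \emph{the reproducing kernel} requires $K_w\in\berg_{\alpha,i}^2(\D)$ for every $w\in\D$, not only for $w\in\D_i$; without this, your $F(w)$ is an integral but not yet the scalar product $\langle K_w,f\rangle_{2,\alpha,i}$, and one would still have to check that the integral converges. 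Writing $K_w$ as a right linear combination of $K_{w_i}$ and $K_{\overline{w_i}}$, which are already known to lie in the space, settles membership immediately and reduces $\langle K_w,f\rangle_{2,\alpha,i}$ to pairings computed in the first step. If you want to keep your identity-principle route, you should at least import this decomposition of $K_w$ to establish membership and convergence; at that point the remaining computation is exactly the paper's.
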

\begin{proof} Let $f\in\berg_{\alpha,i}^2(\D)$ and set $K_w(q) := K_{\alpha}(q,w)$ for $w\in\D$. We show that $K_w\in\berg_{\alpha,i}^2(\D)$ and $\langle K_w ,f \rangle_{2,\alpha,i} = f(w)$.

First consider $w\in\D_i$. Since $K_w|_{\D_i}$ is nothing but the complex Bergman kernel $K_{\C,\alpha}(\cdot,\cdot)$, we immediately obtain from \Cref{BergSplit} that $K_w$ belongs to  $\berg_{\alpha,i}^2(\D)$ and that we can write $f_{i} = f_1 + f_2 j$ with $f_1,f_2\in\berg_{\C,\alpha}^2(\D_i)$. From \Cref{BKerRes} and the reproducing property of $K_{\C,\alpha}(\cdot,\cdot)$ in $\berg_{\C,\alpha}^2(\D)$, we obtain
\begin{align*}
\langle K_w, f\rangle_{2,\alpha,i} &= \int_{\D_i}\overline{K_{\alpha}(z,w)}f(z)\, d A_{\alpha,i}(z) \\
&= \int_{\D_i}\overline{K_{\C,\alpha}(z,w)}f_1(z)\, dA_{\alpha,i}(z) + \int_{\D_i}\overline{K_{\C,\alpha}(z,w)}f_{2}(z)\, d A_{\alpha,i}(z)j\\
&=  f_1(w) + f_2(w)j  = f(w).
\end{align*}
If $w\notin\D_i$, then \Cref{RepFo} implies
\[K_{w} = K_{w_i} (1-ii_w)\frac12  +  K_{\overline{w_i}}(1+ii_w)\frac12,\]
because of the right slice hyperholomorphicity of $K_{\alpha}( q,w)$ in $\overline{w}$. Hence, the function $K_w$ belongs to $\berg_{\alpha,i}^2(\D)$ because it is a right linear combination of the functions  $K_{w_i}$ and $K_{\overline{w_i}}$, which belong to $\berg_{\alpha,i}^2(\D)$ by the above argumentation. From the representation formula, we finally also deduce
\begin{align*}
\langle K_w, f\rangle_{\alpha,i} &= \frac12 \overline{(1-ii_w)}\langle K_{w_i},f\rangle_{\alpha,i} +  \frac12 \overline{(1+ii_w)}\langle K_{\overline{w_i}},f\rangle_{\alpha,i} &\\
&=\frac12 (1-i_wi) f(w_i) + \frac12 (1+i_wi)f(\overline{w_i}) = f(w).
 \end{align*}

\end{proof}

\subsection{The Berezin transform on $\berg_{\alpha,i}^2(\D)$}
In this section we consider the following fixed unitary and anti-selfadjoint operator
\[J f := \ext( i f_i), \quad \forall f \in \berg_{\alpha,i}^2(\D)\]
and consider it as a left scalar multiplication with $i\in\S$.

\begin{corollary}\label{JH+}
It is
\[ (\berg_{\alpha,i}^2(\D))_+^{Ji} = \overline{\linspan_{\C_i}(K_q: q\in\D_i)}\cong \berg_{\C,\alpha}^{2}(\D_i).\]
\end{corollary}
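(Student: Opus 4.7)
The plan is to first unpack what membership in $(\berg_{\alpha,i}^2(\D))_+^{Ji}$ means concretely, then verify the two assertions separately: the identification of this subspace as the closed $\C_i$-span of the kernels $K_q$ with $q\in\D_i$, and its isometric isomorphism with the complex Bergman space $\berg_{\C,\alpha}^2(\D_i)$.

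First I would translate the condition $Jf = fi$ into a statement on $f_i$. By definition of $J$, we have $Jf = \ext(if_i)$, so on $\C_i$ the equation $Jf = fi$ reads $i f_i = f_i i$. Splitting $f_i = f_1 + f_2 j$ with $i\perp j$ and $f_1,f_2:\D_i\to\C_i$ holomorphic (Lemma \ref{SplitLem}), and using that $f_1,f_2$ commute with $i$ while $ji = -ij$, this collapses to $f_2 \equiv 0$. Hence
\[(\berg_{\alpha,i}^2(\D))_+^{Ji} = \{ f \in \berg_{\alpha,i}^2(\D) :  f_i \text{ is } \C_i\text{-valued}\}.\]
I would also observe that this subspace is closed in $\berg_{\alpha,i}^2(\D)$ and is a $\C_i$-Hilbert space under the restriction of $\langle\cdot,\cdot\rangle_{2,\alpha,i}$.

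Next I would establish the isomorphism with $\berg_{\C,\alpha}^2(\D_i)$ through the restriction map $R:f\mapsto f_i$. If $f\in (\berg_{\alpha,i}^2(\D))_+^{Ji}$, then by the previous step $f_i = f_1 \in \berg_{\C,\alpha}^2(\D_i)$ (Corollary \ref{BergSplit} applied with $f_2=0$), and moreover $\|f\|_{2,\alpha,i} = \|f_1\|_{\C,2,\alpha}$ since the defining integrals are the same. Conversely, for $g\in\berg_{\C,\alpha}^2(\D_i)$ satisfying $\partial_{x_0}g + i\partial_{x_1}g = 0$ (it is holomorphic), Corollary \ref{extLem} produces a unique left slice hyperholomorphic extension $\ext_L(g)$ on $\D = [\D_i]$, which by Corollary \ref{BergSplit} belongs to $\berg_{\alpha,i}^2(\D)$ with the same norm, and whose restriction to $\D_i$ equals $g\in\C_i$, placing it in the $+$ subspace. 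Thus $R$ is a surjective $\C_i$-linear isometry (the scalar multiplication by $\C_i$ on the $+$ subspace agrees with pointwise multiplication on the values of $f_i$), proving the stated isomorphism.

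Finally I would prove the first equality. For any $q\in\D_i$, the formula for $K_\alpha$ simplifies on $\D_i$ to the complex Bergman kernel $(1-w\overline{q})^{-(2+\alpha)}$, which is $\C_i$-valued for $w\in\D_i$; hence $(K_q)_i$ takes values in $\C_i$ and $K_q$ lies in $(\berg_{\alpha,i}^2(\D))_+^{Ji}$. Since this subspace is closed and $\C_i$-linear, $\overline{\linspan_{\C_i}\{K_q : q\in\D_i\}} \subset (\berg_{\alpha,i}^2(\D))_+^{Ji}$. For the reverse inclusion, suppose $f\in (\berg_{\alpha,i}^2(\D))_+^{Ji}$ is orthogonal to every $K_q$ with $q\in\D_i$ in the $\C_i$-scalar product. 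The reproducing property gives $f(q) = \langle K_q, f\rangle_{2,\alpha,i} = 0$ for all $q\in\D_i$, so $f_i\equiv 0$, and the representation formula (Theorem \ref{RepFo}) forces $f\equiv 0$. The only subtle point, worth verifying carefully, is that the orthogonality is taken inside the complex Hilbert space $(\berg_{\alpha,i}^2(\D))_+^{Ji}$; since this is precisely where $K_q$ and $f$ both live and the scalar product is $\C_i$-valued on it, the standard Hilbert-space argument applies and completes the proof.
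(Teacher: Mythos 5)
Your proposal is correct and follows essentially the same route as the paper: both characterize $(\berg_{\alpha,i}^2(\D))_+^{Ji}$ by splitting $f_i = f_1 + f_2 j$ and showing $Jf = fi$ forces $f_2 = 0$, and both identify the subspace with $\berg_{\C,\alpha}^2(\D_i)$ via restriction. The only (minor) divergence is the last step, where the paper transports the known density of the complex Bergman kernels through the isomorphism $\varphi^{-1}$, while you reprove density directly via the reproducing property and the representation formula; both are valid.
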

\begin{proof}
Write $f_i$ for $f\in\berg_{\alpha,i}^2(\D)$ as $f_i = f_1 + f_2j$ with components that are holomorphic on $\D_i$. By the right linearity of the extension operator, we then have $f = \ext(f_1) + \ext(f_2)j$. From
\[
\begin{split}
Jf &= \ext(if_i)
\\
&
= \ext(f_1i) + \ext(f_2ij)
\\
&
 = \ext(f_1)i - \ext(f_2)ji,
\end{split}
\]
we deduce that $f\in(\berg_{\alpha,i}^2(\D))_+^{Ji}$ if and only if $f = \ext(f_1)$, i.e. if and only if $f_2 = 0$. The mapping $\varphi: f\mapsto f_i$ is therefore an isometric isomorphism between $(\berg_{\alpha,i}^2(\D))_+^{Ji}$ and $\berg_{\C,\alpha}^2(\D_i)$ and so $(\berg_{\alpha,i}^2(\D))_+^{Ji}\cong \berg_{\C,\alpha}^2(\D_i)$. Since $\varphi(K_q) = K_{\C,q}$ and $\linspan_{\C_i}\{K_{\C,q}, q\in\D_i\}$ is dense in $\berg_{\C,\alpha}^2(\D_i)$, we obtain that
\[
\begin{split}
\berg_{\alpha,i}^2(\D) &
= \varphi^{-1}\left(\berg_{\C,\alpha}^2(\D_i)\right)
\\
&
= \varphi^{-1}\left(\overline{\linspan_{\C_i}\{K_{\C,q}, q\in\D_i\}}\right)
\\
&
= \overline{\linspan_{\C_i}\{\varphi^{-1}(K_{\C,q}), q\in\D_i\}}
\\
&
=  \overline{\linspan_{\C_i}\{K_{q}, q\in\D_i\}}.
\end{split}
\]

\end{proof}

Recall that a bounded linear operator $T$ on $\berg_{\alpha,i}^2(\D)$ satisfies $[T,J]=0$ if and only if $T = \lift{Ji}(T_{\C})$ for some bounded linear operator on $T_{\C}$ on the complex Bergman space $\berg_{\C,\alpha}^2(\D_i)$. In this case $\widetilde{T}(z) = \widetilde{T_{\C}}(z)$, where $\widetilde{T_{\C}}$ denotes the Berezin transform of the complex operator $T_{\C}$.

\begin{theorem}\label{TraceInt}
Let $T$ be a bounded positive operator on $\berg_{\alpha,i}^2(\D)$ and let $(\lambda_{n})_{n\in\N}$ be its sequence of singular values. If
\[d\mu_i(z) = \frac{1}{\pi(1-|z|^2)^2} \,dm_i\]
 is the $\C_i$-M\"obius invariant area measure on $\D_i$, then
\[ \sum_{n=1}^{+\infty} \lambda_n = (\alpha+1)\int_{\D_i}\tilde{T}(z)\, d\mu_i(z).\]
\end{theorem}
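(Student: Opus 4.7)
The plan is to reduce this identity to the classical trace--integral formula on the weighted complex Bergman space $\berg_{\C,\alpha}^2(\D_i)$ via the restriction isomorphism $\res{Ji}$. Since $\widetilde T$ is defined, $T$ must commute with $J$, and so $\res{Ji}(T)$ is a well-defined bounded operator on $(\berg_{\alpha,i}^2(\D))_+^{Ji}$. By \Cref{JH+}, the map $\varphi:f\mapsto f_i$ is an isometric isomorphism of $\C_i$-Hilbert spaces between $(\berg_{\alpha,i}^2(\D))_+^{Ji}$ and $\berg_{\C,\alpha}^2(\D_i)$; under $\varphi$, the reproducing kernel $K_q$ at $q\in\D_i$ corresponds to the complex $\alpha$-Bergman kernel, hence $k_q$ to its classical normalization. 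Consequently $\widetilde T(q) = \widetilde{\res{Ji}(T)}(q)$ for every $q\in\D_i$. By \Cref{CorResLift}, $\res{Ji}(T)$ is a positive operator, and by the lemma preceding \Cref{SchattenIso} its singular values coincide with $(\lambda_n)_{n\in\N}$. Since the singular values of a positive operator are its eigenvalues, $\sum_n\lambda_n = \tr(\res{Ji}(T))$, read as an equality in $[0,+\infty]$.

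It then suffices to invoke the classical trace--integral identity for positive operators on the weighted complex Bergman space (see \cite{kehe}),
$$\tr(\res{Ji}(T)) = (\alpha+1)\int_{\D_i}\widetilde{\res{Ji}(T)}(z)\, d\mu_i(z),$$
and combine it with the identification of the two Berezin transforms to conclude. For a self-contained derivation one would compute directly: using $K_\alpha(z,z) = (1-|z|^2)^{-2-\alpha}$ together with the definitions of $d\mu_i$ and $dA_{\alpha,i}$, rewrite the right-hand side as
$$\int_{\D_i}\langle K_z, TK_z\rangle_{2,\alpha,i}\, dA_{\alpha,i}(z),$$
expand $T = \sum_n e_n\lambda_n\langle e_n,\cdot\rangle_{2,\alpha,i}$ via the spectral theorem with an orthonormal eigenbasis $(e_n)_{n\in\N}$ chosen inside $(\berg_{\alpha,i}^2(\D))_+^{Ji}$ (possible since $T$ commutes with $J$), apply the reproducing property $\langle e_n, K_z\rangle_{2,\alpha,i} = e_n(z)$, and interchange sum and integral by Tonelli (every summand is nonnegative) to obtain $\sum_n \lambda_n\int_{\D_i}|e_n(z)|^2\, dA_{\alpha,i}(z) = \sum_n \lambda_n$ by orthonormality of $(e_n)$.

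The main technical point I expect is the bookkeeping of the measure constants: the factor $(1-|z|^2)^{\alpha}$ appearing in $dA_{\alpha,i}$, the factor $(1-|z|^2)^{-2}$ in $d\mu_i$, and the factor $K_\alpha(z,z)^{-1}=(1-|z|^2)^{2+\alpha}$ entering through the normalization of $k_z$ must combine so that the prefactor $(\alpha+1)$ exactly cancels the constant in $dA_{\alpha,i}$. The more conceptual ingredient --- that a positive operator commuting with $J$ admits a spectral decomposition whose eigenbasis lies inside $(\berg_{\alpha,i}^2(\D))_+^{Ji}$, so that the intrinsically quaternionic integrals reduce to scalar integrals on $\D_i$ --- is supplied jointly by \Cref{CorResLift}, \Cref{JH+}, and \Cref{ComplexONB}.
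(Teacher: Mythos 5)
Your proposal is correct, but it takes a different primary route from the paper. The paper's proof is entirely intrinsic to the quaternionic space: it picks an arbitrary orthonormal basis $(e_n)$ of $\berg_{\alpha,i}^2(\D)$, uses \Cref{TracePos} to get $\sum_n\lambda_n=\sum_n\langle e_n,Te_n\rangle$, writes $S=\sqrt{T}$, and then chains Fubini, the reproducing property applied to $Se_n$, and Parseval's identity to arrive at $\int_{\D_i}\langle K_z,TK_z\rangle\,dA_{\alpha,i}(z)$, finishing with the same measure bookkeeping you describe ($K_\alpha(z,z)=(1-|z|^2)^{-2-\alpha}$ absorbing the normalization of $k_z$). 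Notably, the paper's argument never uses $[T,J]=0$: positivity alone makes $\langle k_z,Tk_z\rangle$ real, so the identity holds for any positive compact operator. Your main line instead transports everything to $\berg_{\C,\alpha}^2(\D_i)$ via $\res{Ji}$ and cites the classical trace--integral formula from \cite{kehe}; this is shorter and consistent with how the rest of the paper handles Schatten-class statements, but it does require the commutation hypothesis (to define $\res{Ji}(T)$ and identify the two Berezin transforms), which the theorem as stated and proved does not actually need. Your backup ``self-contained'' computation is essentially a variant of the paper's: you expand $T$ spectrally and apply Tonelli to the double sum/integral, where the paper routes through $\sqrt{T}$ and Parseval; both are valid, and your remark that the eigenbasis can be taken inside $(\berg_{\alpha,i}^2(\D))_+^{Ji}$ is true but superfluous for that computation, since any quaternionic eigenbasis of the positive compact operator $T$ works and the eigenvalues are real.
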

\begin{proof}
Let $(e_n)_{n\in\N}$ be any orthonormal basis of $\berg_{\alpha,i}^2(\D)$. By \Cref{TracePos}, we have  $\sum_{n=1}^{+\infty}\lambda_n = \sum_{n=0}^{+\infty}\langle e_n,Te_n\rangle$.
Set $S = \sqrt{T}$. Fubini's theorem, the reproducing property of $K_z$ and Parseval's identity imply
\[
\begin{split}
\sum_{n=1}^{+\infty}\langle e_n, Te_n\rangle &= \sum_{n=1}^{+\infty} \|Se_n\|^2 = \sum_{n=1}^{+\infty}\int_{\D_i} |S e_n(z)|^2\,dA_{\alpha,i}(z)
\\
&= \int_{\D_i} \sum_{n=1}^{+\infty} |S e_n(z)|^2\,dA_{\alpha,i}(z) = \int_{\D_i} \sum_{n=1}^{+\infty} |\langle K_z, Se_n\rangle|^2\,dA_{\alpha,i}(z)
\\
&= \int_{\D_i} \sum_{n=1}^{+\infty} |\langle e_n, SK_z\rangle|^2\,dA_{\alpha,i}(z) =  \int_{\D_i}  \| SK_z\|^2\,dA_{\alpha,i}(z)
\\
&=  \int_{\D_i}  \langle K_z, TK_z \rangle\,dA_{\alpha,i}(z) = \int_{\D_i}\widetilde{T}(z)K(z,z)\,dA_{\alpha,i}(z)
\\
&= (\alpha + 1) \int_{\D_i}\widetilde{T}(z)\,d\mu_i(z).
\end{split}
\]
\end{proof}
We have the following important consequence:
\begin{corollary}\label{PosL1}
Let $J$ be any unitary anti-selfadjoint operator on $\berg_{\alpha,i}^2(\D)$. A  positive operator $T\in\boundOP_J(\berg_{\alpha,i}^2(\D))$ belongs to the trace class $S_1(J)$ if and only if $\widetilde{T}\in L^1(\D_i,d\mu_i)$.
\end{corollary}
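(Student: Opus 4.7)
The plan is to reduce to two one-line arguments, one in each direction, by carefully combining \Cref{TraceInt} with the isometric isomorphism of \Cref{SchattenIso}, which identifies $S_1(J)$ with the complex trace class $S_1(\berg_{\C,\alpha}^2(\D_i))$ via $\res{Ji}$. A key observation to record at the start is that for $T\in\boundOP_J(\berg_{\alpha,i}^2(\D))$ the Berezin transform $\widetilde T$ restricted to $\D_i$ coincides pointwise with the classical Berezin transform of $\res{Ji}(T)$ on the complex Bergman space (this was noted in the paragraph just before \Cref{TraceInt}), so the $L^1(\D_i,d\mu_i)$ conditions on the two sides of the isomorphism are literally the same.

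For the forward implication, suppose $T\in S_1(J)$. Then $T$ is compact and positive with summable singular values $(\lambda_n)_{n\in\N}$, so \Cref{TraceInt} gives
\[
(\alpha+1)\int_{\D_i}\widetilde T(z)\,d\mu_i(z)\;=\;\sum_{n=1}^{+\infty}\lambda_n\;<\;+\infty.
\]
Since $T$ is positive, $\widetilde T(z)=\langle k_z,Tk_z\rangle\ge 0$ on $\D_i$, so the above equality immediately yields $\widetilde T\in L^1(\D_i,d\mu_i)$.

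For the converse, the main obstacle is compactness: a priori we only know $T$ is bounded and positive, so \Cref{TraceInt} cannot be applied directly to a candidate singular-value sequence. Here I would invoke the classical characterization for the complex weighted Bergman space (see Zhu~\cite{kehe}, which states that a bounded positive operator on $\berg_{\C,\alpha}^2(\D_i)$ lies in the trace class if and only if its Berezin transform is in $L^1(\D_i,d\mu_i)$), applied to $\res{Ji}(T)$, which is positive by \Cref{CorResLift}. Since $\widetilde{\res{Ji}(T)}=\widetilde T|_{\D_i}\in L^1(\D_i,d\mu_i)$ by hypothesis, this complex result forces $\res{Ji}(T)\in S_1(\berg_{\C,\alpha}^2(\D_i))$, and then the isometric isomorphism of \Cref{SchattenIso} lifts this back to $T\in S_1(J)$.

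If one preferred a self-contained argument avoiding the black box from~\cite{kehe}, the natural plan would be to replace $T$ by the spectral truncations $T_N=\sum_{n\le N}e_n\lambda_n\langle e_n,\cdot\rangle$ of $\res{Ji}(T)$, each of which is finite rank, apply \Cref{TraceInt} to $\lift{Ji}(T_N)\in S_1(J)$ to obtain
\[
\sum_{n\le N}\lambda_n \;=\; (\alpha+1)\int_{\D_i}\widetilde{\lift{Ji}(T_N)}(z)\,d\mu_i(z) \;\le\; (\alpha+1)\int_{\D_i}\widetilde T(z)\,d\mu_i(z),
\]
using pointwise monotonicity of the Berezin transform on positive operators, and then pass to the limit; the genuinely hard step in that route would be proving the necessary pointwise monotonicity $\widetilde{\lift{Ji}(T_N)}\le\widetilde T$ on $\D_i$ together with compactness of $T$ itself, which is why relying on the complex case via \Cref{SchattenIso} is the cleanest path.
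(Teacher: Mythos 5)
Your proof is correct and follows essentially the same route as the paper: the paper states this result as an immediate consequence of \Cref{TraceInt} (reading that identity as equating two quantities that are finite or infinite simultaneously for a bounded positive operator), while you carry out the same reduction to the complex Bergman space via $\res{Ji}$ and \Cref{SchattenIso}. Your explicit treatment of the compactness issue in the converse direction --- which the paper's ``immediate consequence'' framing glosses over --- is a clarification of the same argument rather than a different approach.
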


\begin{corollary}\label{PosL2}
If $T\in S_1(J)$, then $\widetilde{T}$ is in $L^1(\D_i,d\mu_i)$ and
\[\Tr{Ji}(T) = (\alpha+1)\int_{\D_i}\widetilde{T}(z)\, d\mu_i(z). \]
\end{corollary}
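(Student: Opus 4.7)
The plan is to extend Theorem \ref{TraceInt} from positive trace class operators to arbitrary ones by a Cartesian-type decomposition, exploiting that $J$ acts as multiplication by $i\in\C_{\bi}$, so that both the $Ji$-trace and the Berezin transform are $\C_i$-linear functionals on $\boundOP_J(\berg_{\alpha,i}^2(\D))$.

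First I would decompose $T$ into selfadjoint pieces. Set $S_1:=(T+T^*)/2$ and $S_2:=-J(T-T^*)/2$. Since $T\in S_1(J)$ implies $T^*\in S_1(J)$ (the singular values of $T^*$ coincide with those of $T$ and $[T,J]=0$ yields $[T^*,J]=0$ via $J^*=-J$), both $S_1$ and $S_2$ lie in $S_1(J)$. A short computation using $J^*=-J$ and $[T,J]=[T^*,J]=0$ shows that $S_1$ and $S_2$ are selfadjoint and that $T=S_1+JS_2$. Next, apply $\res{Ji}$: since $\res{Ji}(S_k)$ is a selfadjoint complex trace class operator on $\berg_{\C,\alpha}^2(\D_i)$, it admits a decomposition $\res{Ji}(S_k)=T_{k,+,\C}-T_{k,-,\C}$ with $T_{k,\pm,\C}\geq 0$ in $S_1(\berg_{\C,\alpha}^2(\D_i))$. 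Lifting back via $\lift{Ji}$ and using Corollary \ref{CorResLift}, I obtain positive operators $S_k^{\pm}\in S_1(J)$ with $S_k=S_k^{+}-S_k^{-}$, so that
\[ T = (S_1^{+} - S_1^{-}) + J(S_2^{+} - S_2^{-}). \]

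The remaining step is to apply Theorem \ref{TraceInt} to each of the four positive summands and assemble via linearity. For each positive $S_k^{\pm}$, Lemma \ref{TracePos} gives $\Tr{Ji}(S_k^{\pm})=\sum_n\lambda_n(S_k^{\pm})$, and Theorem \ref{TraceInt} yields
\[\Tr{Ji}(S_k^{\pm}) = (\alpha+1)\int_{\D_i}\widetilde{S_k^{\pm}}(z)\,d\mu_i(z),\]
with each $\widetilde{S_k^{\pm}}\in L^1(\D_i,d\mu_i)$ (this is the content of Corollary \ref{PosL1}). Because $\Tr{Ji}$ is $\C_i$-linear and identifies $J$ with multiplication by $i$ (so $\Tr{Ji}(JA)=i\,\Tr{Ji}(A)$) and analogously $\widetilde{JA}(z)=i\,\widetilde{A}(z)$ for $z\in\D_i$ by the $\C_i$-linearity of the Berezin transform, I can sum the four contributions:
\[\Tr{Ji}(T) = \Tr{Ji}(S_1^{+})-\Tr{Ji}(S_1^{-}) + i\bigl(\Tr{Ji}(S_2^{+})-\Tr{Ji}(S_2^{-})\bigr), \]
and the same combination holds pointwise for $\widetilde{T}$. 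Substituting the integral formulas and using linearity of integration gives the claimed identity, while the $L^1$ membership of $\widetilde{T}$ follows from the $L^1$ membership of the four positive pieces.

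The only subtle point, and the main thing to watch, is the bookkeeping of the $\C_i$-linear structure: one must check that $JS_2$ selfadjoint implies $\Tr{Ji}(JS_2)=i\,\Tr{Ji}(S_2)$ and $\widetilde{JS_2}(z)=i\,\widetilde{S_2}(z)$ for $z\in\D_i$, which in turn reduces to the defining identification of $J$ with left multiplication by $i$ on $\hil_+^{Ji}$ (Remark \ref{BJIso}) together with the fact that $k_z\in\hil_+^{J\bi}$ for $z\in\D_i$ by Corollary \ref{JH+}. An equivalent, slicker route would be to bypass the decomposition altogether and invoke the classical complex-case identity for $\res{Ji}(T)\in S_1(\berg_{\C,\alpha}^2(\D_i))$, since $\Tr{Ji}(T)=\tr(\res{Ji}(T))$ and $\widetilde{T}|_{\D_i}=\widetilde{\res{Ji}(T)}$; I would mention this as a consistency check.
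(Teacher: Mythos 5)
Your proof is correct and follows essentially the same route as the paper: both arguments write $T$ as a $\C_i$-linear combination of four positive operators in $S_1(J)$ (you perform the Cartesian decomposition on the quaternionic side before passing to $\res{Ji}$, while the paper decomposes $\res{Ji}(T)$ directly into real/imaginary and positive/negative parts and lifts back) and then apply \Cref{TraceInt} to each piece together with the $\C_i$-linearity of the $Ji$-trace and of the Berezin transform. The bookkeeping identities $\Tr{Ji}(JA)=i\,\Tr{Ji}(A)$ and $\widetilde{JA}(z)=i\,\widetilde{A}(z)$ that you single out are exactly what the paper invokes implicitly under the heading of $\C_i$-linearity.
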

\begin{proof}
Set $T_{\C} = \res{Ji}(T)$, write
\[ T_{\C} = T_{\C,1} - T_{\C,2} + i(T_{\C,2} - T_{\C,3})\]
with positive operators $T_{\C,\ell}\in \boundOP(\berg_{\C,\alpha}^2(\D_i))$ and set $T_\ell := \lift{Ji}(T_{\C,\ell})$ for $\ell = 1,\ldots,4$. Then the operators $T_\ell$ are positive and $T = T_1 - T_2 + J T_3 - J T_4$. By the $\C_i$-linearity of the $Ji$-trace, the $\C_i$-linearity of the Berezin transform and \Cref{TraceInt}, we have
\begin{align*}\Tr{Ji}(T) =& \Tr{Ji}(T_1) - \Tr{Ji}(T_2) + i \Tr{Ji}(T_3) - i \Tr{Ji}(T_4)\\
=& (\alpha+1)\int_{\D_i}\widetilde{T_1}(z)\, d\mu_i(z) - (\alpha+1)\int_{\D_i}\widetilde{T_2}(z)\, d\mu_i(z) \\
&+i(\alpha+1)\int_{\D_i}\widetilde{T_3}(z)\, d\mu_i(z) - i(\alpha+1)\int_{\D_i}\widetilde{T_4}(z)\, d\mu_i(z)\\
=& (\alpha+1)\int_{\D_i}\widetilde{T}(z)\, d\mu_i(z).
\end{align*}

\end{proof}

\begin{theorem}\label{AnotherTheorem}
Let $T$ be a positive operator on $\berg_{\alpha,i}^2(\D)$ such that $[T,J]  = 0$.
\begin{enumerate}[(i)]
\item If $1\leq p <+\infty$ and $T\in S_p(J)$, then $\widetilde{T} \in L^p(\D_i,d\mu_i)$.
\item If $0<p\leq 1$ and $\widetilde{T}\in L^p(\D_i,d\mu_i)$, then $T\in S_p(J)$.
\end{enumerate}
\end{theorem}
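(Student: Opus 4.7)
The plan is to combine two ingredients: the pointwise Jensen-type inequalities from \Cref{SpCrit3} relating $\widetilde{T}(z)^p$ and $\widetilde{T^p}(z)$, and the integral trace formula \Cref{PosL2}, together with the equivalence $T \in S_p(J) \Leftrightarrow T^p \in S_1(J)$ for positive $T$ from \Cref{SpS1}. I first need to make sense of $T^p$ for general $p>0$: since $T$ is positive and commutes with $J$, I write $T = \lift{Ji}(T_{\C})$ with $T_{\C} = \res{Ji}(T)$ positive on $\berg_{\C,\alpha}^2(\D_i)$, and set $T^p := \lift{Ji}(T_{\C}^p)$, which is again positive and commutes with $J$. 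Applying \Cref{SpCrit3} with the unit vector $x = k_z$, and using that $\widetilde{T}(z) \geq 0$ because $T$ is positive, I obtain
\[
\widetilde{T}(z)^p \leq \widetilde{T^p}(z) \quad (p \geq 1), \qquad \widetilde{T^p}(z) \leq \widetilde{T}(z)^p \quad (0 < p \leq 1).
\]

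For (i), if $T \in S_p(J)$ with $p \geq 1$ then $T$ is compact and \Cref{SpS1} gives $T^p \in S_1(J)$ with $\|T^p\|_1 = \|T\|_p^p$. Since $T^p$ is a positive trace class operator, $\Tr{Ji}(T^p) = \|T^p\|_1$, and \Cref{PosL2} applied to $T^p$ yields
\[
\int_{\D_i} \widetilde{T^p}(z)\,d\mu_i(z) = \frac{\Tr{Ji}(T^p)}{\alpha+1} = \frac{\|T\|_p^p}{\alpha+1} < +\infty.
\]
Combining with the pointwise bound $\widetilde{T}^p \leq \widetilde{T^p}$ gives $\widetilde{T} \in L^p(\D_i, d\mu_i)$, as required.

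For (ii), assuming $\widetilde{T} \in L^p(\D_i, d\mu_i)$ with $0 < p \leq 1$, the reverse pointwise bound gives $\widetilde{T^p} \leq \widetilde{T}^p \in L^1(\D_i, d\mu_i)$. By \Cref{PosL1} applied to the positive operator $T^p \in \boundOP_J$, we conclude $T^p \in S_1(J)$; in particular $T^p$ is compact. Writing its spectral decomposition $T^p x = \sum_n e_n \lambda_n \langle e_n, x\rangle$ with $\lambda_n \to 0$ (from \Cref{CompSpec}), I obtain $Tx = (T^p)^{1/p} x = \sum_n e_n \lambda_n^{1/p} \langle e_n, x\rangle$ via the functional calculus, so $T$ is compact with singular values $(\lambda_n^{1/p})_n$, which satisfy $\sum_n (\lambda_n^{1/p})^p = \sum_n \lambda_n = \|T^p\|_1 < +\infty$. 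Hence $T \in S_p(J)$.

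The main obstacle will be the pointwise inequality needed in part (ii), since \Cref{SpCrit3} is only stated for compact positive operators while the compactness of $T$ is not given a priori; the extension to bounded positive operators is routine via the quaternionic spectral theorem for bounded positive operators, replacing the sums in the proof of \Cref{SpCrit3} by spectral integrals in Jensen's inequality applied to $t \mapsto t^p$. Alternatively, since $\widetilde{T}(z) = \widetilde{T_{\C}}(z)$ on $\D_i$ and $T \in S_p(J) \Leftrightarrow T_{\C} \in S_p(\berg_{\C,\alpha}^2(\D_i))$ by \Cref{SchattenIso}, one can transfer the whole statement to the complex Bergman space and invoke the classical inequality, which holds without any compactness assumption.
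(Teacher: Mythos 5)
Your proof is correct and follows essentially the same route as the paper's: the Jensen-type inequality of \Cref{SpCrit3} evaluated at the unit vector $x=k_z$, the equivalence $T\in S_p(J)\Leftrightarrow T^p\in S_1(J)$ for positive operators, and the trace integral formula of \Cref{TraceInt} (via \Cref{PosL1} and \Cref{PosL2}). The paper disposes of part (ii) with ``proved in a similar way''; your fleshed-out argument---in particular your observation that \Cref{SpCrit3} is stated only for \emph{compact} positive operators, which is not given a priori in (ii), and your fix via the complex functional calculus for $\res{Ji}(T)$ together with recovering compactness of $T$ from that of $T^p$---supplies precisely the details the paper leaves implicit.
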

\begin{proof}
For $1\leq p < +\infty$, \Cref{SpCrit3} implies
$\widetilde{T^p}(z)\geq (\widetilde{T}(z))^p$ for $z\in\D_i$. Because $T\in S_p(J)$, we have by \Cref{SpS1Pos} that $T^p\in S_1(J)$, and hence, we deduce from \Cref{TraceInt}
\[\int_{\D_i} \left(\widetilde{T}(z)\right)^{p}\,d\mu_i(z) \leq \int_{\D_i}\widetilde{T^p}(z)\, d\mu_i(z) = \Tr{Ji}(T^p)<+\infty.\]
The second statement is proved in a similar way.

\end{proof}
\begin{corollary}
If $1\leq p < + \infty$ and $T\in S_p(J)$, then $\widetilde{T}\in L^p(\D_i, d\mu_i)$.
\end{corollary}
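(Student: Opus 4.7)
The plan is to reduce the general case to the positive-operator case already established in \Cref{AnotherTheorem}(i), using a Cartesian-plus-spectral decomposition of $T$ into four positive Schatten class operators, entirely in the spirit of the proof of \Cref{PosL2}.

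First, I would pass to the complex Bergman space via the restriction map, setting $T_{\C} := \res{Ji}(T)$. By \Cref{SchattenIso} and \Cref{JH+}, $T_{\C}$ belongs to the classical Schatten class $S_p(\berg_{\C,\alpha}^2(\D_i))$. Since the Schatten $p$-norm on a complex Hilbert space is invariant under taking adjoints, the Cartesian decomposition
\[ T_{\C} = \frac{T_{\C} + T_{\C}^*}{2} + i\,\frac{T_{\C} - T_{\C}^*}{2i} \]
expresses $T_{\C}$ as $A_{\C} + iB_{\C}$ with $A_{\C},B_{\C}$ selfadjoint members of $S_p(\berg_{\C,\alpha}^2(\D_i))$. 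Applying the classical spectral theorem to each selfadjoint summand and separating positive and negative spectral parts then produces four positive operators $T_{\C,1},\ldots,T_{\C,4}\in S_p(\berg_{\C,\alpha}^2(\D_i))$ with
\[ T_{\C} = T_{\C,1} - T_{\C,2} + i\bigl(T_{\C,3} - T_{\C,4}\bigr). \]

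Lifting back via $T_\ell := \lift{Ji}(T_{\C,\ell})$ and invoking \Cref{CorResLift} together with \Cref{SchattenIso}, the $T_\ell$ are positive elements of $S_p(J)$ satisfying $T = T_1 - T_2 + JT_3 - JT_4$. By \Cref{AnotherTheorem}(i), each Berezin transform $\widetilde{T_\ell}$ lies in $L^p(\D_i,d\mu_i)$. Finally, the $\C_i$-linearity of the Berezin transform proved earlier (with $J$ viewed as left multiplication by $i$) gives
\[ \widetilde{T} = \widetilde{T_1} - \widetilde{T_2} + i\widetilde{T_3} - i\widetilde{T_4}, \]
so $\widetilde{T}$ is a $\C_i$-linear combination of $L^p(\D_i,d\mu_i)$ functions and therefore itself lies in $L^p(\D_i,d\mu_i)$.

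I do not foresee a serious obstacle: every ingredient is either classical on the complex side or already present in the paper, namely adjoint invariance of the Schatten $p$-norm, the splitting of a selfadjoint Schatten-class operator into its positive and negative parts (whose singular values are dominated by the original ones, cf.\ the min-max formula of \Cref{singvalnorm}), and the fact that $\res{Ji}$ and $\lift{Ji}$ preserve positivity and Schatten class membership. The mildest subtle point is verifying that the quaternionic lift of a positive complex operator is again a positive quaternionic operator, but this is precisely \Cref{CorResLift}.
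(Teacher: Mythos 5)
Your proof is correct and follows essentially the same route as the paper: the paper's own proof simply cites \Cref{AnotherTheorem} together with the decomposition of $T$ into a $\C_i$-linear combination of positive operators carried out in the proof of \Cref{PosL2}, which is exactly the Cartesian-plus-positive/negative-part splitting you spell out. Your version just makes explicit the details (adjoint invariance of the Schatten norm, positivity of the lifted pieces) that the paper leaves implicit.
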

\begin{proof}
This follows  from \Cref{AnotherTheorem} and the fact that we can write $T$, in terms of the $\C_i$-Banach space structure defined on $S_p(J)$, as a $\C_i$-linear combination of positive operators. See the proof of \Cref{PosL2}.

\end{proof}

For $z\in\D_i$, we denote by $P_z$ the orthogonal projection of $\berg_{\alpha,i}^2(\D)$ onto the one-dimensional subspace generated by $k_z$, i.e.
\[P_z(f) =  k_z\langle k_z,f\rangle, \quad \forall f\in\berg_{\alpha,i}^2(\D).\]
Obviously, $P_z$ is a positive operator with rank one. Observe that $k_z\in\hil_{+}^{Ji}$ by \Cref{JH+} and hence $Jk_z = k_zi$, from which we deduce
\[
\begin{split}
JP_z f &= Jk_z\langle k_z, f\rangle = k_z i \langle k_z, f\rangle
\\
&
= k_z \langle -k_z i, f\rangle = k_z\langle J^*k_z,f\rangle
\\
&
 = k_z \langle k_z, Jf\rangle
 \\
 &
  = P_zJf
\end{split}
\]
for all $f\in\berg_{\alpha,i}^2(\D)$.
Hence, $[P_z,J] = 0$ and $P_z \in S_1(J)$ as $\Tr{Ji}( P_z) = 1$.

\begin{corollary}\label{cbaf}
Let $T\in\B_J(\berg_{\alpha,i}^2(\D))$. Then
\[\widetilde{T}(z) = \Tr{Ji}(TP_z), \quad \forall z\in\D_i.\]
\end{corollary}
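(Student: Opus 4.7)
The plan is to exploit the fact that $P_z$ is a rank-one orthogonal projection onto $k_z$, so that $TP_z$ has rank at most one, making the trace a one-term sum when computed against a conveniently chosen orthonormal basis.

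First I would verify that $TP_z$ actually lies in the trace class, so that $\Tr{Ji}(TP_z)$ makes sense. The discussion immediately preceding the corollary already establishes $P_z\in S_1(J)$ with $\Tr{Ji}(P_z)=1$, and by hypothesis $T\in\B_J(\berg_{\alpha,i}^2(\D))$, so the ideal property of the Schatten classes (\Cref{ideal}) gives $TP_z\in S_1(J)$.

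Next I would produce an orthonormal basis of $(\berg_{\alpha,i}^2(\D))_+^{Ji}$ whose first element is $k_z$. This is legitimate because $k_z$ is a unit vector in $(\berg_{\alpha,i}^2(\D))_+^{Ji}$: by \Cref{JH+} we have $k_z\in\overline{\linspan_{\C_i}\{K_q:q\in\D_i\}}=(\berg_{\alpha,i}^2(\D))_+^{Ji}$, and by construction $\|k_z\|=1$. Let $(e_n)_{n\in\N}$ denote such a basis with $e_1=k_z$.

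Then I would compute the trace using \Cref{TCy}. For each $n$, right-linearity of $T$ and the definition of $P_z$ give $TP_z e_n = (Tk_z)\langle k_z,e_n\rangle$. Since $e_1=k_z$ has unit norm and $\langle k_z,e_n\rangle=0$ for $n\ge 2$, only the first term survives:
\[
\Tr{Ji}(TP_z)=\sum_{n\in\N}\langle e_n,TP_z e_n\rangle=\langle k_z,Tk_z\rangle\cdot 1=\widetilde{T}(z),
\]
which is exactly the claim. I expect no substantial obstacle; the only point requiring care is the justification that $k_z\in\hil_+^{Ji}$ so that the complex orthonormal basis extension is available and \Cref{TCy} applies, but this is handled by \Cref{JH+}.
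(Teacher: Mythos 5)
Your proposal is correct and follows essentially the same route as the paper: choose an orthonormal basis of $(\berg_{\alpha,i}^2(\D))_{+}^{Ji}$ with first element $k_z$ and observe that only the first term of the trace sum survives. The only addition is your explicit verification via \Cref{ideal} that $TP_z\in S_1(J)$, which the paper leaves implicit; this is a reasonable point of care but not a different argument.
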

\begin{proof}
Let $(e_n)_{n\in\N}$ be an orthonormal basis of $(\berg_{\alpha,i}^2(\D))_{+}^{Ji}$ with $e_1 = k_z$. Then
\[\Tr{Ji}(TP_z) = \sum_{n=1}^{+\infty}\langle e_n,TP_ze_n\rangle = \langle k_z,Tk_z\rangle = \widetilde{T}(z).\]

\end{proof}

\begin{corollary}
Let $T\in\B_J(\berg_{\alpha,i}^2(\D))$. Then
\[ |\widetilde{T}(z) - \widetilde{T}(w)| \leq \| T\| \|P_z - P_w\|_{1}, \quad \forall z\in\D_z.\]
\end{corollary}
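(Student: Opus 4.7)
The plan is to reduce the estimate to a trace inequality and then invoke the Hölder-type bound for the Schatten classes that was established earlier as item (iii) of \Cref{TraceProp}.

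First I would use \Cref{cbaf} to rewrite the difference of Berezin transforms as
\[
\widetilde{T}(z) - \widetilde{T}(w) = \Tr{Ji}(TP_z) - \Tr{Ji}(TP_w).
\]
Since $P_z, P_w \in S_1(J)$ (as noted in the paragraph preceding \Cref{cbaf}), and $S_1(J)$ is a $\C_i$-linear subspace of $\B_J(\hil)$, the operator $P_z - P_w$ also belongs to $S_1(J)$. By \Cref{ideal} (or directly by the $\C_i$-linearity of $\Tr{Ji}$, which follows from the corresponding linearity of the classical complex trace through $\res{Ji}$), the two trace expressions can be combined into
\[
\widetilde{T}(z) - \widetilde{T}(w) = \Tr{Ji}\bigl(T(P_z - P_w)\bigr).
\]

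Next I would apply the Hölder-type inequality from item \eqref{TraceHoelder} of \Cref{TraceProp} in the limiting case $p = \infty$, $q = 1$. Since $T \in \B_J(\berg_{\alpha,i}^2(\D)) = S_\infty(J)$ with $\|T\|_\infty = \|T\|$, and $P_z - P_w \in S_1(J)$, we obtain
\[
|\Tr{Ji}\bigl(T(P_z - P_w)\bigr)| \leq \|T\|_\infty \|P_z - P_w\|_1 = \|T\|\,\|P_z - P_w\|_1,
\]
which is the desired estimate. (Strictly speaking, \Cref{TraceProp} was stated for $1 \leq p < \infty$, but the $p = \infty$ version is standard and follows immediately by transferring through the isometric isomorphism $\res{Ji}$ of \Cref{SchattenIso} to the corresponding complex statement, which is classical.)

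I do not expect any significant obstacle: the result is essentially a one-line consequence of \Cref{cbaf} once the Hölder inequality for Schatten classes is available. The only minor subtlety is ensuring that $P_z - P_w \in S_1(J)$, which is immediate from the fact that each $P_z$ lies in $S_1(J)$ and that $S_1(J)$ is closed under differences. A small notational remark: the statement presumably should read $\forall z, w \in \D_i$ rather than $\forall z \in \D_z$, since both $z$ and $w$ must lie in $\D_i$ for $P_z, P_w$ and the Berezin values $\widetilde{T}(z), \widetilde{T}(w)$ to be defined via the chosen slice.
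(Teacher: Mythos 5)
Your proof is correct and follows essentially the same route as the paper: reduce via \Cref{cbaf} to $\Tr{Ji}(T(P_z-P_w))$ and then apply the H\"older-type trace estimate \eqref{TraceHoelder} of \Cref{TraceProp} together with \Cref{ideal}, with the same (correctly flagged) caveat that the $p=\infty$, $q=1$ case is the one actually needed. Your remark that the quantifier should read $\forall z,w\in\D_i$ is also well taken.
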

\begin{proof}
By \Cref{cbaf}, we have
\[\widetilde{T}(z) - \widetilde{T}(w) = \Tr{Ji}(TP_z) - \Tr{Ji}(TP_w) = \Tr{Ji}(T(P_z-P_w)).\]
Hence, we deduce from \Cref{ideal} and (\ref{TraceHoelder}) in \Cref{TraceProp} that
\[| \widetilde{T}(z) - \widetilde{T}(w)| \leq |\Tr{Ji}(T(P_z-P_w)) | \leq \|T\| \|P_z-P_w\|_1.  \]

\end{proof}

\begin{lemma}
Let $z,w\in\D_i$. Then
\[ \|P_z - P_w\| = \left(1 - |\langle k_w, k_z\rangle|^2\right)^{\frac{1}{2}},\]
while
\[ \|P_z - P_w \|_1= 2 \left(1-|\langle k_w,k_z\rangle|^2\right)^{\frac{1}{2}}.\]
\end{lemma}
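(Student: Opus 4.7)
The plan is to exploit the isomorphism $\res{Ji}:\boundOP_J(\berg_{\alpha,i}^2(\D))\to \boundOP(\berg_{\C,\alpha}^2(\D_i))$ of \Cref{SchattenIso} to reduce the statement to a computation with two rank-one orthogonal projections on a complex Hilbert space. By \Cref{JH+}, both $k_z$ and $k_w$ lie in $(\berg_{\alpha,i}^2(\D))_+^{Ji}$, so $\res{Ji}(P_z)$ and $\res{Ji}(P_w)$ are the rank-one orthogonal projections on $\berg_{\C,\alpha}^2(\D_i)$ onto the $\C_i$-lines spanned by $k_z$ and $k_w$, respectively. Since \Cref{SchattenIso} tells us that $\res{Ji}$ preserves both the operator norm and the trace norm, it suffices to compute these two norms for $A:=\res{Ji}(P_z)-\res{Ji}(P_w)$ in the complex setting.

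The central observation is that $A$ is self-adjoint and has rank at most $2$: its range is contained in the two-dimensional $\C_i$-subspace $V:=\linspan_{\C_i}\{k_z,k_w\}$, while $A$ vanishes on $V^\perp$. Hence $A$ has at most two nonzero (real) eigenvalues $\lambda_1,\lambda_2$, and I pin them down with two trace computations. First, $\tr A=\tr P_z-\tr P_w=1-1=0$, so $\lambda_1+\lambda_2=0$ and the nonzero eigenvalues are $\pm\lambda$ for some $\lambda\geq 0$. Second, expanding
\[A^2=P_z-P_zP_w-P_wP_z+P_w\]
and using the routine rank-one identity $\tr(P_zP_w)=\tr(P_wP_z)=|\langle k_z,k_w\rangle|^2$, I obtain
\[\tr A^2=2\bigl(1-|\langle k_z,k_w\rangle|^2\bigr).\]
Since $\tr A^2=\lambda_1^2+\lambda_2^2=2\lambda^2$, this forces $\lambda=\bigl(1-|\langle k_w,k_z\rangle|^2\bigr)^{1/2}$, where I use that $|\langle k_z,k_w\rangle|=|\langle k_w,k_z\rangle|$.

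Finally, since $A$ is self-adjoint, its singular values coincide with the absolute values of its eigenvalues. The operator norm equals the largest singular value, giving $\|P_z-P_w\|=\lambda$, while the trace norm equals the sum of the singular values, giving $\|P_z-P_w\|_1=|\lambda|+|-\lambda|=2\lambda$. I do not expect any serious obstacle: the only technical points are verifying that $\tr(P_zP_w)=|\langle k_z,k_w\rangle|^2$ (immediate by evaluating the trace on an orthonormal basis containing $k_z$) and confirming that the rank-at-most-two argument lets us deduce that $A$ has no further nonzero eigenvalues, which is clear from the invariance of $V$ and $V^\perp$ under the self-adjoint operator $A$.
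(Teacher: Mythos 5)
Your proof is correct, and its opening move is exactly the paper's: use the fact that $k_z,k_w\in(\berg_{\alpha,i}^2(\D))_+^{Ji}$ (so $[P_z,J]=[P_w,J]=0$) and that $\res{Ji}$ preserves both the operator norm and the trace norm to reduce everything to the two rank-one orthogonal projections $\res{Ji}(P_z)$, $\res{Ji}(P_w)$ on the complex Bergman space. Where you diverge is in what you do after the reduction: the paper simply cites the known complex-case identities (Zhu, Lemma~6.10) together with $\langle k_w,k_z\rangle=\langle k_{w,i},k_{z,i}\rangle_{\C}$, whereas you supply a self-contained proof of those identities via the rank-two spectral computation --- $A=\res{Ji}(P_z)-\res{Ji}(P_w)$ is self-adjoint, supported on $V=\linspan_{\C_i}\{k_z,k_w\}$, has $\tr A=0$ and $\tr A^2=2\left(1-|\langle k_z,k_w\rangle|^2\right)$, hence eigenvalues $\pm\lambda$ with $\lambda=\left(1-|\langle k_w,k_z\rangle|^2\right)^{1/2}$, from which both norms follow by reading off the singular values. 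All the individual steps check out, including the rank-one trace identity $\tr(P_zP_w)=|\langle k_z,k_w\rangle|^2$ and the degenerate case where $k_z$ and $k_w$ are parallel (then $A=0$ and both sides vanish). What your version buys is independence from the external reference; what the paper's version buys is brevity and uniformity with the surrounding results, which are all proved by the same transfer-to-$\C_i$ template.
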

\begin{proof}
These equalities follow immediately from the corresponding equalities in the complex case, cf. \cite[ Lemma~6.10]{kehe}, and the following facts: $\res{J,i}$ preserves both the operator norm and the Schatten norm (cf. \Cref{BJIso} and \Cref{SchattenIso}) and $\langle k_w , k_z\rangle = \langle k_{w,i}, k_{z,i}\rangle_{\C}$, where $\langle\cdot,\cdot\rangle_{\C}$ denotes the scalar product of the complex Bergman space $\berg_{\C,i}^{2}(\D_i)$ and $k_{w,i}$ denotes the restriction of $k_w$ to $\D_i$.
Indeed, denoting $P_{z,\C}: = \res{Ji}(P_z)$ and $P_{w,\C} := \res{Ji}(P_w)$, we have
\begin{gather*}
\| P_z - P_w \| = \|P_{z,\C} - P_{w,\C}\| = \left(1 - |\langle k_{w,i}, k_{z,i}\rangle_{\C}|^2\right)^{\frac{1}{2}} = \left(1 - |\langle k_{w}, k_{z}\rangle|^2\right)^{\frac{1}{2}}.
\end{gather*}
The case of the Schatten-norm follows analogously.

\end{proof}
Finally, we obtain Lipschitz estimates for the Berezin transform analogue to those of the complex case.
\begin{theorem}
Let $T\in\B_J(\berg_{\alpha,i}^2(\D))$. Then
\[ |\widetilde{T}(z) - \widetilde{T}(w) | \leq 2 \sqrt{2+\alpha} \| T \| \rho(z,w), \quad \forall z,w\in\D_i,\]
where
\[ \rho(z,w) = \frac{|z-w|}{|1-z\overline{w}|}\]
is the pseudo-hyperbolic metric between $z$ and $w$. Furthermore, the Lipschitz constant $2\sqrt{2+\alpha}$ is sharp.
\end{theorem}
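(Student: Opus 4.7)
The plan is to reduce the estimate to a computation with the normalized reproducing kernels, for which one already has the previous corollary bounding $|\widetilde{T}(z) - \widetilde{T}(w)|$ by $\|T\|\,\|P_z - P_w\|_1$ and the previous lemma identifying $\|P_z - P_w\|_1 = 2(1-|\langle k_w, k_z\rangle|^2)^{1/2}$. So the entire theorem is pinned down by evaluating the quantity $1-|\langle k_w, k_z\rangle|^2$ for $z, w \in \D_i$ and comparing it to $\rho(z,w)^2$.

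First I would compute $\langle k_w, k_z\rangle$ explicitly. By the reproducing property, $\langle K_w, K_z\rangle = K(w,z)$, and hence
\[
|\langle k_w, k_z\rangle|^2 \;=\; \frac{|K(w,z)|^2}{K(w,w)\,K(z,z)}.
\]
Since $z, w$ lie in the same complex plane $\C_i$, Remark \ref{BKerRes} reduces this to the complex Bergman kernel $K_{\C,\alpha}(z,w) = (1-z\overline{w})^{-(2+\alpha)}$. A direct substitution gives
\[
|\langle k_w, k_z\rangle|^2 \;=\; \left(\frac{(1-|z|^2)(1-|w|^2)}{|1-z\overline{w}|^2}\right)^{2+\alpha}.
\]
The well-known identity $|1-z\overline{w}|^2 - |z-w|^2 = (1-|z|^2)(1-|w|^2)$ then lets me rewrite the parenthesis as $1 - \rho(z,w)^2$, yielding $|\langle k_w, k_z\rangle|^2 = (1-\rho(z,w)^2)^{2+\alpha}$.

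Next, combining with the previous corollary and lemma, the theorem reduces to showing $1 - (1-\rho^2)^{2+\alpha} \leq (2+\alpha)\rho^2$ for $\rho = \rho(z,w) \in [0,1)$. Since $\alpha > -1$, we have $2+\alpha > 1$, so this is Bernoulli's inequality $(1-x)^a \geq 1 - ax$ applied with $a = 2+\alpha$ and $x = \rho^2$. Taking square roots and multiplying by $2\|T\|$ gives exactly $|\widetilde{T}(z) - \widetilde{T}(w)| \leq 2\sqrt{2+\alpha}\,\|T\|\,\rho(z,w)$.

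For sharpness, I would fix $z, w \in \D_i$ and exhibit an operator $T$ of norm one with $[T,J]=0$ that saturates the corollary's bound; since $P_z - P_w$ is a selfadjoint rank-two operator commuting with $J$, its polar factor in $\B_J(\berg_{\alpha,i}^2(\D))$ (obtained via Theorem \ref{polar} and Corollary \ref{CorResLift}) serves as such a $T$, giving $|\widetilde{T}(z) - \widetilde{T}(w)| = \|P_z - P_w\|_1 = 2(1-(1-\rho^2)^{2+\alpha})^{1/2}$. Letting $w \to z$ along $\C_i$, the ratio $(1-(1-\rho^2)^{2+\alpha})/\rho^2$ tends to $2+\alpha$ by a Taylor expansion, so the constant $2\sqrt{2+\alpha}$ cannot be replaced by any smaller one. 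The main obstacle I anticipate is a bookkeeping one rather than a conceptual one: making sure that the polar factor used for sharpness genuinely lies in $\B_J(\berg_{\alpha,i}^2(\D))$, which is handled by Corollary \ref{CorResLift} applied after transferring the selfadjoint operator $P_z - P_w$ to the complex component space through $\res{Ji}$.
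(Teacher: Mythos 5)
Your argument is correct, but it is a genuinely different route from the one the paper takes. The paper's proof is a one-line reduction: it sets $T_{\C}=\res{Ji}(T)$, notes $\widetilde{T}=\widetilde{T_{\C}}$ on $\D_i$, and invokes the complex result (Zhu, Theorem~6.11) together with $\|T_{\C}\|=\|T\|$; sharpness is inherited from the complex case for the same reason. You instead reprove the estimate from scratch inside the quaternionic framework, chaining the two preceding results of the paper ($|\widetilde{T}(z)-\widetilde{T}(w)|\leq\|T\|\,\|P_z-P_w\|_1$ and $\|P_z-P_w\|_1=2(1-|\langle k_w,k_z\rangle|^2)^{1/2}$) with the explicit evaluation $|\langle k_w,k_z\rangle|^2=\bigl(1-\rho(z,w)^2\bigr)^{2+\alpha}$ and Bernoulli's inequality $1-(1-x)^{a}\leq ax$ for $a=2+\alpha>1$; all of these steps check out, including the kernel computation via the identity $|1-z\overline{w}|^2-|z-w|^2=(1-|z|^2)(1-|w|^2)$. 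Your sharpness argument (taking for $T$ the selfadjoint polar factor of $P_z-P_w$, which lies in $\B_J$ by transferring through $\res{Ji}$, so that $\widetilde{T}(z)-\widetilde{T}(w)=\Tr{Ji}\bigl(T(P_z-P_w)\bigr)=\|P_z-P_w\|_1$, and then letting $w\to z$) is also sound; note that the extremal operator depends on $z$ and $w$, which is all that sharpness of a uniform Lipschitz constant requires. What your approach buys is a self-contained proof that does not outsource the analytic content to the complex literature — in effect you are reproving Zhu's theorem along the way — at the cost of being longer; the paper's approach is shorter but leaves both the estimate and the sharpness claim resting entirely on the cited complex result.
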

\begin{proof}
Again, we apply the corresponding result for complex linear operators \cite[Theorem~6.11]{kehe}. Denoting $T_{\C} = \res{Ji}(T)$, we have
\[
\begin{split}
|\widetilde{T}(z) - \widetilde{T}(w)| &= |\widetilde{T_{\C}}(z) - \widetilde{T_{\C}}(w)|
\\
&
\leq 2\sqrt{2+\alpha}\|T_{\C}\|\rho(z,w)
\\
&
= 2\sqrt{2+\alpha}\|T\|\rho(z,w).
\end{split}
\]
\end{proof}
\begin{theorem}
Let $T\in\B_J(\berg_{\alpha,i}^2(\D))$. Then
\[ |\widetilde{T}(z) - \widetilde{T}(w) | \leq 2 \sqrt{2+\alpha} \| T \| \beta(z,w), \quad \forall z,w\in\D_i,\]
where
\[ \beta(z,w) = \frac{1}{2}\log\frac{1+\rho(z,w)}{1-\rho(z,w)},\]
is the Bergman metric on $\D_i$. Furthermore, the Lipschitz constant $2\sqrt{2+\alpha}$ is sharp.
\end{theorem}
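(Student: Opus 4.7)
The plan is to reduce the statement to its complex-analytic counterpart exactly as in the preceding theorem, and then either (a) invoke the complex Bergman-metric Lipschitz estimate directly, or (b) observe that the pseudo-hyperbolic estimate just proven already implies the Bergman-metric version via the elementary inequality $\rho \le \beta$.

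First I would set $T_{\C} := \res{Ji}(T) \in \boundOP(\berg_{\C,\alpha}^2(\D_i))$. By the discussion following \Cref{JH+}, the Berezin transform $\widetilde{T}$ on $\D_i$ coincides pointwise with the classical Berezin transform $\widetilde{T_\C}$ of the complex operator $T_\C$, and by \Cref{BJIso} one has $\|T\| = \|T_\C\|$. Hence the inequality we want is exactly the complex-case statement
\[|\widetilde{T_\C}(z) - \widetilde{T_\C}(w)| \le 2\sqrt{2+\alpha}\,\|T_\C\|\,\beta(z,w),\]
which is \cite[Theorem~6.12]{kehe} (or can be proved here by integrating the previous pseudo-hyperbolic estimate along the Bergman geodesic joining $z$ and $w$; since $d\beta = \frac{d\rho}{1-\rho^2}$ this yields the logarithmic factor defining $\beta$).

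A slicker, self-contained route avoids even citing the complex Bergman-metric result: the previous theorem gives
\[|\widetilde{T}(z) - \widetilde{T}(w)| \le 2\sqrt{2+\alpha}\,\|T\|\,\rho(z,w),\]
and since $\rho(z,w) = \tanh \beta(z,w) \le \beta(z,w)$ for every $z,w\in\D_i$, the bound follows immediately with the same constant. I would present this as the main line of argument, because it requires no new tools beyond what has already been established.

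For sharpness, I would lift a sharpness example from the complex case: by the corresponding classical result, there exist sequences $(T_{\C,n})\subset \boundOP(\berg_{\C,\alpha}^2(\D_i))$ and points $z_n,w_n\in\D_i$ such that $|\widetilde{T_{\C,n}}(z_n)-\widetilde{T_{\C,n}}(w_n)|/(\|T_{\C,n}\|\beta(z_n,w_n))\to 2\sqrt{2+\alpha}$. Setting $T_n := \lift{Ji}(T_{\C,n})\in\B_J(\berg_{\alpha,i}^2(\D))$, the identities $\widetilde{T_n}(z) = \widetilde{T_{\C,n}}(z)$ and $\|T_n\|=\|T_{\C,n}\|$ from \Cref{BJIso} and \Cref{JH+} transfer the example verbatim to the quaternionic setting, proving that the constant $2\sqrt{2+\alpha}$ cannot be improved. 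The only mild obstacle is to make sure, when using the $\rho \le \beta$ shortcut, that the resulting constant is still sharp; that is why I would invoke the complex sharpness example explicitly via $\lift{Ji}$ rather than try to derive sharpness from the pseudo-hyperbolic inequality alone.
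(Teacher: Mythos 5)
Your proposal is correct, and it actually contains the paper's argument as one of its two branches: the paper's proof is exactly your route (a), namely setting $T_{\C}=\res{Ji}(T)$, using $\widetilde{T}(z)=\widetilde{T_{\C}}(z)$ and $\|T\|=\|T_{\C}\|$, and citing the complex Bergman-metric Lipschitz estimate of Zhu verbatim (the paper does not spell out the sharpness transfer at all, so your explicit lifting of the extremal complex examples via $\lift{Ji}$ is if anything more careful than the source). Your preferred route (b) is a genuinely different and more self-contained derivation of the inequality itself: since $\beta=\tanh^{-1}\rho$, one has $\rho(z,w)=\tanh\beta(z,w)\le\beta(z,w)$, so the pseudo-hyperbolic estimate already proven immediately yields the Bergman-metric estimate with the same constant, with no further appeal to the complex literature. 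What route (a) buys is that sharpness comes packaged with the cited theorem; what route (b) buys is economy of tools, at the cost of having to argue sharpness separately --- and you correctly identify that $\rho\le\beta$ alone does not give sharpness, so you fall back on the complex extremal example (one could alternatively note that $\beta(z,w)/\rho(z,w)\to 1$ as $w\to z$, so sharpness of the $\rho$-constant in the local limit transfers to the $\beta$-constant, but your explicit lift is equally valid). No gaps.
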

\begin{proof}
Once more, we deduce this from the corresponding result for complex linear operators \cite[Theorem~6.11]{kehe}. Denoting $T_{\C} = \res{Ji}(T)$, we have
\[
\begin{split}
|\widetilde{T}(z) - \widetilde{T}(w)| &= |\widetilde{T_{\C}}(z) - \widetilde{T_{\C}}(w)|
 \\
 &
\leq 2\sqrt{2+\alpha}\|T_{\C}\|\beta(z,w)
\\
&
= 2\sqrt{2+\alpha}\|T\|\beta(z,w).
\end{split}
\]
and this concludes the proof.
\end{proof}
We conclude this paper with a remark on further applications of slice hyperholomorphicity and quaternionic operators.
\begin{remark}
Classical Schur analysis is an important branch of operators theory with several applications in science and in technology, see for example the book \cite{MR2002b:47144},
 the notion of $S$-spectrum and of $S$-resolvent operators appear in Schur analysis in the slice hyperholomorphic setting
in the realization of Schur functions, see the foundational paper \cite{acs1}.
\\
The literature on Schur analysis in the slice hyperholomorphic setting is nowadays very well developed we mention just some of the
main results that are contained in the papers \cite{acs1,acs2,acs3,milano} and in the book \cite{ACSBOOK}.
The main reference for slice hyperholomorphic functions are the books \cite{ACSBOOK,SCalcBook,GSSb}.
\end{remark}


\begin{thebibliography}{99}

\bibitem{adler}
 S. Adler, {\em Quaternionic Quantum Field Theory}, Oxford University Press,
1995.



\bibitem{MR2002b:47144}
D. Alpay,
{\em The {S}chur algorithm, reproducing kernel spaces and system
  theory},
 American Mathematical Society, Providence, RI, 2001,
Translated from the 1998 French original by Stephen S. Wilson,
  Panoramas et Synth\`eses.



\bibitem{ack}
D.~{Alpay}, F.~{Colombo},  D. P. Kimsey,
{\em The spectral theorem for quaternionic unbounded normal operators based on the $S$-spectrum},
 {Preprint 2014,} avaliable on arXiv:1409.7010.

\bibitem{Herglotz} D. {Alpay}, F. {Colombo},  D. P. Kimsey and I. {Sabadini},
{\em  An extension of Herglotz's theorem to the quaternions},
 J. Math. Anal. Appl. {\bf 421} (2015), 754--778.

\bibitem{acks2}
D. {Alpay}, F. {Colombo},  D. P. Kimsey and I. {Sabadini}.
{\em The spectral theorem for unitary operators based on the $S$-spectrum},
 {Preprint 2014,} avaliable on arXiv:1403.0175.

\bibitem{ACSBOOK} D. {Alpay}, F. {Colombo},  I. {Sabadini},
{\em Slice Hyperholomorphic Schur Analysis},
Quaderni Dipartimento di Matematica del Politecnico di Milno, QDD209, (2015).

\bibitem{acs1}
D. {Alpay}, F. {Colombo},  I. {Sabadini},
 {\em Schur functions and their realizations in the slice hyperholomorphic
  setting},
{Integral Equations Operator Theory}, {\bf 72}  (2012), 253--289.

\bibitem{acs2}
D. {Alpay}, F. {Colombo} and  I. {Sabadini},
 {\em Pontryagin  De Branges Rovnyak spaces of slice hyperholomorphic functions},
J. Anal. Math., {\bf 121} (2013), 87-125.

\bibitem{acs3}
D. {Alpay}, F. {Colombo} and I. {Sabadini},
{\em Krein-Langer factorization and related topics in the slice
  hyperholomorphic setting},  J. Geom. Anal., {\bf 24} (2014),  843--872.


\bibitem{Fock}
D. {Alpay}, F. {Colombo},  I. {Sabadini}, G. Salomon,
{\em Fock space in the slice hyperholomorphic setting},
in Hypercomplex Analysis: New Perspectives and Applications, (2014), 43--59.



\bibitem{milano}
 D. Alpay,  F. Colombo,  I. Lewkowicz,  I. Sabadini,  {\em Realizations of
 slice hyperholomorphic generalized contractive and positive functions},
 Milan J. Math. {\bf 83} (2015),  91–-44.

\bibitem{acgs}
D.~{Alpay}, F.~{Colombo}, J. Gantner  and I.~{Sabadini},
{\em A new resolvent equation for the $S$-functional calculus},
 J. Geom. Anal., {\bf 25} (2015), 1939--1968.

\bibitem{BvN}
G. Birkhoff and  J. von Neumann, {\em The logic of quantum mechanics},  Ann. of Math.,
 {\bf 37} (1936), 823-843.




\bibitem{BBD}
C.M.P. Castillo Villalba, F. Colombo, J. Gantner, J.O. Gonz\'{a}lez-Cervantes,
{\em Bloch, Besov and Dirichlet Spaces of Slice Hyperholomorphic Functions},
Complex anal. Oper. theory {\bf 9} (2015), 479-517.


\bibitem{cglss}
  F. Colombo, J. O. Gonz\'alez-Cervantes,  M. E. Luna-Elizarraras, I. Sabadini, M. Shapiro,
  \emph{  On two approaches to the Bergman theory for slice regular functions},
   Advances in hypercomplex analysis, 39–54, Springer INdAM Ser., 1, Springer, Milan, 2013.

   \bibitem{CGS}  F. Colombo, J. O. Gonz\'alez-Cervantes, I. Sabadini,
\emph{ On slice biregular functions and isomorphisms of Bergman spaces},
 Complex Var. Elliptic Equ., {\bf 57} (2012),  825--839.


 \bibitem{CGS3}F. Colombo, J. O. Gonz\'alez-Cervantes, I. Sabadini,
\emph{The C-property for slice regular functions and applications to the Bergman space},
Compl. Var. Ell. Equa., {\bf 58}  (2013), 1355--1372.

\bibitem{JGA} F. Colombo, I. Sabadini,
{\em On some properties of the quaternionic functional calculus},
J. Geom. Anal., {\bf 19}  (2009), 601-627.

\bibitem{CLOSED} F. Colombo, I. Sabadini,
{\em  On the  formulations of the quaternionic functional calculus},
 J. Geom. Phys., {\bf 60} (2010), 1490--1508.





\bibitem{SCalcBook}
F. Colombo, I. Sabadini, and D.~C. Struppa,
 {\em Noncommutative functional calculus. Theory and applications of slice regular functions},
 volume 289 of {\em Progress
  in Mathematics}.
 Birkh\"auser/Springer Basel AG, Basel, 2011.



\bibitem{ds}
N. Dunford, J. Schwartz. {\it Linear Operators, part I: General
Theory }, J. Wiley and Sons (1988).





\bibitem{ds2}
N. Dunford and J. Schwartz. {\it Linear Operators, part II: Spectral theory }, J. Wiley and Sons (1988).


\bibitem{12}
 G. Emch, {\em M\'ecanique quantique quaternionienne et relativit\'e restreinte},  I, Helv. Phys.
Acta,  {\bf 36} (1963), 739--769.


\bibitem{fp}
D. R. Farenick and B. A. F. Pidkowich, {\em The spectral theorem in quaternions},
Linear Algebra Appl.,
{\bf 371} (2003), 75--102.

\bibitem{COp}
M. Fashandi,
{\em Compact operators on quaternionic Hilbert spaces}
 Facta Univ. Ser. Math. Inform. {\bf 28, no. 3} (2013), 249--256

\bibitem{14}
 D. Finkelstein, J. M. Jauch, S. Schiminovich and  D. Speiser, {\em Foundations of
quaternion quantum mechanics}, J. Mathematical Phys., {\bf 3} (1962), 207--220.



\bibitem{GSSb}
 G. Gentili, C. Stoppato, D. C.  Struppa, {\em Regular functions of a quaternionic variable}.
  Springer Monographs in Mathematics. Springer, Heidelberg, 2013.



\bibitem{GMP} R. Ghiloni, V. Moretti and A. Perotti,
{\em Continuous slice functional calculus in quaternionic Hilbert spaces},
Rev. Math. Phys.,  {\bf 25} (2013), 1350006, 83 pp.



\bibitem{spectcomp} R. Ghiloni, V. Moretti and A. Perotti,
{\em Spectral properties of compact normal quaternionic operators},
 in Hypercomplex Analysis: New Perspectives and Applications
Trends in Mathematics, 133--143,  (2014).

\bibitem{GK} I. C. Gohberg,  M. G. Krein,
{\em Introduction to the theory of linear nonselfadjoint operators}
Translated from the Russian by A. Feinstein. Translations of Mathematical Monographs,
Vol. 18 American Mathematical Society, Providence, R.I. 1969 xv+378 pp.




\bibitem{21}
L. P. Horwitz and L. C. Biedenharn, {\em Quaternion quantum mechanics: Second quantization
and gauge fields}, Annals of Physics, {\bf 157} (1984), 432"1€7488.




\bibitem{rudin}
W. Rudin,  {\em Real and complex Analysis, Third Edition},  McGraw-Hill Book Co., New York, (1987)

\bibitem{Stroethoff}
K. Stroethoff, {\em The Berezin transform and operators on spaces of analytic functions} (English summary) Linear operators (Warsaw, 1994), 361–380,
{\em Banach Center Publ.}, 38, Polish Acad. Sci., Warsaw, 1997


\bibitem{sc} {C. S. Sharma and T. J. Coulson},
{\em Spectral theory for unitary operators on a quaternionic
{H}ilbert space}, J. Math. Phys., {\bf 28} (1987), 1941--1946.


\bibitem{Thirulogasanthar}
K. Thirulogasanthar, S.T. Ali,
{\em A class of vector coherent states defined over matrix domains},
 J. Math. Phys, {\bf 44} (11), 5070--5083.

\bibitem{sarfatti} G. Sarfatti, \emph{Elements of function theory in the unit ball of quaternions},
 PhD thesis, Universit\'a di Firenze, 2013.

\bibitem{Viswanath}
K. Viswanath, {\em Normal operators on quaternionic Hilbert spaces}, Trans. Amer. Math. Soc.,
{\bf 162} (1971), 337--350.

\bibitem{kehe}
K. Zhu, {\em Operator Theory in Function Spaces}, 2nd edition, volume 138 of
{\em Mathematical Surveys and Monographs}. American Mathematical Society, Providence, RI, 2007



\end{thebibliography}
\end{document}